\newcommand{\RR}{\mathbb{R}}
\newcommand{\bfzero}{\boldsymbol 0}
\newcommand{\mcP}{\mathcal{P}}
\newcommand{\mcK}{\mathcal{K}}
\newcommand{\mcE}{\mathcal{E}}
\newcommand{\mcF}{\mathcal{F}}
\newcommand{\mcA}{\mathcal{A}}
\newcommand{\mcT}{\mathcal{T}}
\newcommand{\mcH}{\mathcal{H}}
\newcommand{\mcO}{\mathcal{O}}
\newcommand{\mcX}{\mathcal{X}}
\newcommand{\tn}{|\mspace{-1mu}|\mspace{-1mu}|}
\newcommand{\jump}[1]{[#1]}
\newcommand{\mean}[1]{\langle {#1} \rangle}
\newcommand{\dgmean}[1]{\left\{\!\!\left\{ {#1} \right\}\!\!\right\}}
\newcommand{\Gammah}{{\Gamma_h}}
\newcommand{\nablas}{\nabla_\Gamma}
\newcommand{\nablash}{\nabla_{\Gamma_h}}
\newcommand{\Ps}{{P}_\Gamma}
\newcommand{\Psh}{{P}_{\Gamma_h}}
\newcommand{\foralls}{\forall\,}
\newcommand{\ds}{\,\mathrm{d} \Gamma}
\newcommand{\dx}{\,\mathrm{d}x}
\newcommand{\dE}{\,\mathrm{d}E}
\newcommand{\dK}{\,\mathrm{d}K}
\newcommand{\dM}{\,\mathrm{d}M}
\DeclareMathOperator{\spann}{span}
\DeclareMathOperator{\dist}{dist}
\numberwithin{equation}{section}
\newtheorem{lemma}{Lemma}[section]
\newtheorem{proposition}{Proposition}[section]
\newtheorem{theorem}{Theorem}[section]
\newtheorem{remark}{Remark}[section]
\newtheorem{corollary}{Corollary}[section]
\begin{document}
\title{\bf A Cut Discontinuous Galerkin Method for the Laplace-Beltrami Operator}
\author{Erik Burman}
\address[Erik Burman]{Department of Mathematics, University College London, London, UK--WC1E 6BT, United Kingdom}
\email{e.burman@ucl.ac.uk}

\author{Peter Hansbo}
\address[Peter Hansbo]{Department of Mechanical Engineering, J\"onk\"oping University,
SE-55111 J\"onk\"oping, Sweden.}
\email{Peter.Hansbo@jth.hj.se}

\author{Mats G.\ Larson}
\email{mats.larson@umu.se}
\author{Andr\'e Massing}
\address[Mats G.\ Larson, Andr\'e Massing]{Department of Mathematics and Mathematical Statistics, Ume{\AA} University, SE-90187 Ume{\AA}, Sweden}
\email{andre.massing@umu.se}

\maketitle

%%%%%%%%%%%%%%%%%%%%%%%%%%%%%%%%%%%%%%%%%%%%%%%%%%%%%%%%%%%%%%%%%%%%%%%
\begin{abstract} 
  { We develop a discontinuous cut finite element method (CutFEM) for
    the Laplace-Beltrami operator on a hypersurface embedded in
    $\mathbb{R}^d$. The method is constructed by using a discontinuous
    piecewise linear finite element space defined on a background mesh
    in $\mathbb{R}^d$. The surface is approximated by a continuous
    piecewise linear surface that cuts through the background mesh in an
    arbitrary fashion. Then a discontinuous Galerkin method is
    formulated on the discrete surface and in order to obtain
    coercivity, certain stabilization terms are added on the faces
    between neighboring elements that provide control of the
    discontinuity as well as the jump in the gradient.
    We derive optimal a priori error and condition number estimates
    which are independent of the  positioning of the surface in the background mesh.
    Finally, we present numerical examples confirming
    our theoretical results.}

  \noindent {\tiny KEY WORDS.} 
  surface PDE, Laplace-Beltrami, discontinuous Galerkin, cut finite element method
\end{abstract}

\section{Introduction}

\subsection{Background} 
Recently there has been a rapid development of so-called cut finite
element methods (CutFEM), which provide a technology for
discretization of both the geometry of the computational domain and
the partial differential equations (PDE) which we seek to solve.
The basic idea in CutFEM is to represent the geometry of the domain on
a fixed background mesh which is also used to construct the finite
element space.
The geometric representation of surfaces consists of
elementwise smooth surfaces that are allowed to cut through the
background mesh in an arbitrary fashion. 
The active
background mesh then consists of all elements that are cut by the domain and
the finite element space used in the computation is the restriction to
the active mesh.
The variational formulation of the
PDE
is defined on the cut elements.
Boundary and
interface conditions are enforced weakly. 
In order to obtain a stable
method, independent of the position of the geometry in the background
mesh, and to handle the cut elements in the analysis, certain
stabilization terms are added that provide control of the local
variation of the discrete functions. Further stabilization may be
necessary to control for instance convection or to establish an
inf-sup condition. CutFEM can be used to handle problems in the
bulk domain, on surfaces, and coupled bulk-surface
problems.

\subsection{Earlier Work} 
While the development of standard finite element methods on
triangulated surfaces for the numerical approximation of surface PDEs
was already initiated by the seminal paper by~\citet{Dziuk1988},
CutFEM-type methods for surface PDEs have been introduced only
recently. 
\citet{OlshanskiiReuskenGrande2009} proposed the first discretization of the
Laplace-Beltrami operator based on restricting
continuous piecewise linear finite element functions from the ambient space
to the surface. 
The matrix properties of the resulting system were
investigated in \citet{OlshanskiiReusken2010} showing that diagonal
preconditioning can cures the discrete system from being severely
ill-conditioned.
As an alternative, stabilization techniques based on face stabilization
and full gradient stabilization were introduced and analyzed 
by \citet{BurmanHansboLarson2015} and
\citet{DeckelnickElliottRanner2013}, respectively.
Face stabilization techniques were also used in
\citet{BurmanHansboLarsonEtAl2015}
to develop a cut finite element method for
stationary convection problems on surface,
while \citet{OlshanskiiReuskenXu2014} employed
streamline diffusion techniques.
For evolving surface problems using CutFEM-type techniques,
we refer to \citet{OlshanskiiReuskenXu2014a} and \citet{HansboLarsonZahedi2015b}.
Higher order methods can be found in \citet{GrandeReusken2014}
and adaptive methods including a posteriori error estimates
were presented by \citet{DemlowOlshanskii2012} and \citet{ChernyshenkoOlshanskii2014}.  

For bulk problems we mention the
following contributions: 
Interface problems were considered in \citet{HansboHansbo2002}
with similar techniques being used in \citet{HansboHansboLarson2003} and
\citet{MassingLarsonLoggEtAl2013}
to develop overlapping mesh methods.
Face-based so-called ghost penalties were then 
employed to solve the Poisson boundary problem \citep{BurmanHansbo2012},
the Stokes boundary problem \citep{BurmanHansbo2013,MassingLarsonLoggEtAl2013a}
and  Stokes interface problems \citep{HansboLarsonZahedi2013,WangChen2015}.
Alternative CutFEM schemes 
for the Stokes interface problem can be found
in \citet{GrossReusken2007} where the pressure space were enriched in the vicinity of the interface, 
and in \citet{HeimannEngwerIppischEtAl2013,SollieBokhoveVegt2011} which are
based on unfitted discontinuous Galerkin methods using cell-merging techniques
problems to obtain stable and well-conditioned numerical schemes.
Higher order discontinuous Galerkin
with extended element stabilization for an elliptic problem
were investigated by \citet{JohanssonLarson2013}. 
For coupled surface-bulk problems,
see \citet{BurmanHansboLarsonEtAl2014,GrossOlshanskiiReusken2014}, 
and \citet{MassingLarsonLogg2013} for implementation issues.
 We refer to the overview article \citet{BurmanClausHansboEtAl2014} on
cut finite element methods and the references therein.
Finally, we want to mention~\citet{DziukElliott2013}
for a recent and comprehensive overview on methods for surface PDEs.

\subsection{New Contributions}
In this paper we develop a stable cut discontinuous Galerkin method 
for the Laplace-Beltrami operator on a hypersurface in $\mathbb{R}^d$.
Discontinuous Galerkin methods generally provide excellent conservation properties
and advantageous flexibility in terms of mesh refinement and local space enrichment
and additionally exhibit good stability
properties for convection-diffusion problems and problems involving discontinuous coefficients.
Extending the results from \citet{BurmanHansboLarson2015},
this paper is to the best of our knowledge
the first step in developing CutFEM-type discontinuous
Galerkin method for surface problems.
For triangulated surfaces a discontinuous Galerkin 
method was proposed and analyzed in \citet{DednerMadhavanStinner2013}. 
The analysis of the cut discontinuous Galerkin method on surfaces however 
poses additional challenges and thus requires different technical tools. 

We consider discontinuous piecewise linear elements 
on a background mesh consisting of simplices in $\mathbb{R}^d$. 
The discrete surface is continuous and is on each element given by a plane 
that cuts through the element. Such a surface may, e.g., conveniently be constructed by taking the zero
level set of a continuous piecewise linear approximation of the
distance function. We assume that the discrete surface converges to 
the exact surface in such a way that the error in position and normal 
is $O(h^2)$ and $O(h)$, respectively. On the piecewise linear surface we formulate a
discontinuous Galerkin method.  

In order to prove basic stability results the formulation must be stabilized and we use consistent terms
that provide control of the variation of the finite element functions
over the faces in the background mesh. This is achieved by controlling
the jump in the function as well as the gradient across interior faces
in the active mesh.  Essentially, the stabilization terms enable
control of functions on an element in terms of neighboring elements,
which allows us to handle the presence of small and
deteriorated surface elements. 

Two technical estimates are critical 
to our analysis. First, an inverse estimate of the co-normal fluxes 
at edges in terms of the tangent gradient and the stabilization terms 
and secondly, a discrete Poincar\'e estimate for discontinuous piecewise linear functions that provides control of the $L^2$ norm on the active 
mesh in terms of the tangent gradient and the stabilization terms. 
These results extend the corresponding results in
\citet{BurmanHansboLarson2015} to discontinuous piecewise polynomials
and also provide certain improvements in the details of the proof.
Based on the stability results we derive a priori estimates of the
error in the energy and $L^2$ norm that takes both the approximation
of the solution and the aproximation of the geometry into
account. Furthermore, again using the stabilization, we prove an optimal upper bound for the
condition number. We emphasize that all our results are completely
independent of the relative position of the discrete surface in the background
mesh and only information available from the discrete surface is used
in the definition of the method.

\subsection{Outline}
In Section~\ref{sec:model-problem} we present the
model problem and formulate the numerical method, 
while in Section~\ref{sec:domain-perturbation},
estimates related to the error resulting from the approximation
of the hypersurface are collected.
Section~\ref{sec:approximation-props} is devoted to the construction of an interpolation
operator and the formulation of the necessary interpolation estimates.
Afterwards,
we develop stability estimates for the proposed numerical method 
in Section~\ref{sec:stability-estimates},
including
certain Poincar\'e estimates for finite element spaces on thin domains.
We also prove novel inverse estimates for the co-normal flux accounting for
irregular surface geometry discretizations which are typical in CutFEM methods.
The core of Section~\ref{sec:a-priori-estimates} derives a priori
error estimates based on a Strang Lemma approach, followed by providing
bounds for condition number in Section~\ref{sec:condition-number-estimate}.
Finally, the numerical studies presented Section~\ref{sec:numerical-results}
serves to corroborate our theoretical findings and to illustrate
the importance of the employed stabilization techniques.

\section{Model Problem and Finite Element Method}
\label{sec:model-problem}
\subsection{Preliminaries}
\label{ssec:preliminaries}
In what follows, $\Gamma$ denotes a compact and oriented $C^k$-hypersurface,
$k \geqslant 2$ without boundary which is embedded in ${{\RR}}^{d}$ and equipped with a
normal field $n: \Gamma \to \RR^{d}$ of class $C^{k-1}$.
We let $\rho \in C^k(U_{\delta_0}(\Gamma))$ be the signed distance function
induced by the normal field $n$ and uniquely defined 
on the tubular neighborhood 
$U_{\delta_0}(\Gamma) = \{ x \in \RR^{d} : \dist(x,\Gamma) < \delta_0
\}$ for some $\delta_0 > 0$,
see~\citet{GilbargTrudinger2001}.
For such a tubular neighborhood, $p(x)$ denotes the closest point projection
given by
\begin{align}
  p(x) =  x - \rho(x) n(p(x))
\end{align}
which maps $x \in U_{\delta_0}(\Gamma)$ to the unique point $p(x) \in \Gamma$ such that $| p(x) - x | = \dist(x, \Gamma)$.
The closest point projection allows to extend any function on $\Gamma$ to its
tubular neighborhood $U_{\delta_0}(\Gamma)$ using the pull back
\begin{equation}
\label{eq:extension}
u^e(x) = u \circ p (x)
\end{equation}
In particular, we can smoothly extend the normal field $n_{\Gamma}$ to the tubular neighborhood $U_{\delta_0}(\Gamma)$.
On the other hand, 
for any subset $\widetilde{\Gamma} \subseteq U_{\delta_0}(\Gamma)$ such that 
$p: \widetilde{\Gamma} \to \Gamma $ is bijective, a function $w$ on
$\widetilde{\Gamma}$ can be lifted to $\Gamma$ by the push forward
\begin{align}
  (w^l(x))^e = w^l \circ p = w \quad \text{on } \widetilde{\Gamma}
\end{align}

A function $u: \Gamma \to \RR$ is of class $C^l(\Gamma)$, $l \leqslant k$ if 
there exists an extension $\overline{u} \in C^l(U)$ with $\overline{u}|_{\Gamma} = u$
for some $d$-dimensional neighborhood $U$ of $\Gamma$.
Then the tangent gradient $\nabla_\Gamma$ on $\Gamma$ is defined
by
\begin{equation}
\nablas u = \Ps \nabla \overline{u}
\label{eq:tangent-gradient}
\end{equation}
with $\nabla$ the ${{\RR}}^{d}$ gradient and $\Ps = \Ps(x)$ the
orthogonal projection of $\RR^{d}$ onto the tangent plane of $\Gamma$ at $x \in \Gamma$
given by
\begin{equation}
  \Ps = I - n_{\Gamma} \otimes n_{\Gamma}
\end{equation}
where $I$ is the identity matrix. It can easily been shown that the definition~\eqref{eq:tangent-gradient} is independent of the extension $\overline{u}$.
We let $\| w \|^2_\Gamma = (w,w)_\Gamma $ denote the $L^2(\Gamma)$ norm on $\Gamma$ and
introduce the Sobolev $H^m(\Gamma)$ space as the subset of
$L^2$ functions for which the norm
\begin{equation}
\| w \|^2_{m,\Gamma} = \sum_{k=0}^m \| D^{P,k}_\Gamma w\|_\Gamma^2,
\quad m = 0,1,2
\end{equation}
is defined. Here, the $L^2$ norm for a matrix is based on the pointwise
Frobenius norm, $D^{P,0}_\Gamma w = w$ and the derivatives $D^{P,1}_\Gamma = \Ps\nabla w,
D^{P,2}_\Gamma w = \Ps(\nabla \otimes \nabla w)\Ps$ are taken in a weak sense.
Finally, for any function space $V$ defined on $\Gamma$, we denote
the space consisting of extended functions by $V^e$ and
correspondingly, we use the notation $V^l$ to refer to the lift of a
function space~$V$ defined on $\widetilde{\Gamma}$.

\subsection{The Continuous Problem}
We consider the following problem: find $u: \Gamma \rightarrow {{\RR}}$
such that
\begin{align}
  \label{eq:LB}
-\Delta_\Gamma u = f \quad \text{on $\Gamma$}
\end{align}
where $\Delta_\Gamma$ is the Laplace-Beltrami operator on $\Gamma$
defined by
\begin{equation}
\Delta_\Gamma = \nabla_\Gamma \cdot \nabla_\Gamma
\end{equation}
and $f\in L^2(\Gamma)$ satisfies $\int_\Gamma f = 0$. The
corresponding weak statement takes the form: find
$u \in H^1(\Gamma)/\RR$
such that
\begin{equation}
a(u,v) = l(v) \quad \forall v \in H^1(\Gamma)/\RR
\end{equation}
where
\begin{equation}
a(u,v) = (\nabla_\Gamma u, \nabla_\Gamma v)_\Gamma, \quad l(v) = (f,v)_\Gamma
\end{equation}
and $(v,w)_\Gamma = \int_\Gamma v w$ is the $L^2$ inner product.
It follows from the Lax-Milgram lemma that this problem has a unique
solution. For smooth surfaces we also have the elliptic regularity
estimate
\begin{equation}
\| u \|_{2,\Gamma} \lesssim \|f\|_\Gamma
\label{eq:ellreg}
\end{equation}

Here and throughout the paper we employ the notation $\lesssim$ to denote less
or equal up to a positive constant that is always independent of the
mesh size. The binary relations $\gtrsim$ and $\sim$ are defined analogously. 

\subsection{The Cut Finite Element Space}
\label{ssec:domain-and-fem-spaces}

Let $\widetilde{\mcT}_{h}$ be a quasi uniform mesh, with mesh parameter
$0<h\leq h_0$, into shape regular simplices of a open and bounded domain $\Omega$ in
$\RR^{d}$ containing $U_{\delta_0}(\Gamma$). 
On $\widetilde{\mcT}_h$, let $\rho_h$ be an continuous, piecewise
linear approximation of the distance function $\rho$
and define the discrete surface $\Gamma_h$ as the zero level set of
$\rho_h$; that is
\begin{equation}
\Gamma_h = \{ x \in \Omega : \rho_h(x) = 0 \}
\end{equation}

We note that $\Gamma_h$ is a polygon with flat faces and we let
$n_h$ be the piecewise constant exterior unit normal to $\Gamma_h$.
We assume that: 
\begin{itemize}
\item $\Gamma_h \subset U_{\delta_0}(\Gamma)$ and that the closest
point mapping $p:\Gamma_h \rightarrow \Gamma$ is a bijection for $0< h
\leq h_0$.
\item The following estimates hold
\begin{equation} \| \rho \|_{L^\infty(\Gamma_h)} \lesssim h^2, \qquad
\| n^e - n_h \|_{L^\infty(\Gamma_h)} \lesssim h
\label{eq:geometric-assumptions-II}
\end{equation} 
\end{itemize}
These properties are, for instance, satisfied if
$\rho_h$ is the Lagrange interpolant of $\rho$.
For the mesh $\widetilde{\mcT}_{h}$, we define active background
mesh
\begin{align} 
  \mcT_h &= \{ T \in \widetilde{\mcT}_{h} : T \cap \Gamma_h \neq \emptyset \}
  \label{eq:narrow-band-mesh}
  \\
  \intertext{and its set of interior faces by}
  \mcF_h &= \{ F =  T^+ \cap T^-: T^+, T^- \in \mcT_h \}
\end{align}
The face normals $n^+_F$ and $n^-_F$ are then given by the unit
normal vectors which are perpendicular on $F$ and are pointing exterior
to $T^+$ and $T^-$, respectively.
The corresponding collection of geometric entities for the
surface approximation $\Gamma_h$ are denoted by
\begin{align}
  \mcK_h&=\{K = \Gamma_h \cap T : T \in \mcT_h \}
  \\
  \mcE_h&=\{E = K^+ \cap K^-:  K^+, K^- \in \mcK_h \}
\end{align}
To each interior edge $E$ we associate the normals $n^{\pm}_E$ 
given by the unique unit vector which is
coplanar to the surface element $K^{\pm}$, perpendicular
to $E$ and points outwards with respect to $K^{\pm}$. 
Note that while the two normals $n_F^{\pm}$ only differ by a sign,
the normals $n_E^{\pm}$ do lie in genuinely different planes.
The various set of geometric entities are illustrated in
Figure~\ref{fig:domain-set-up}.
\begin{figure}[htb]
  \begin{center}
    \includegraphics[width=0.45\textwidth]{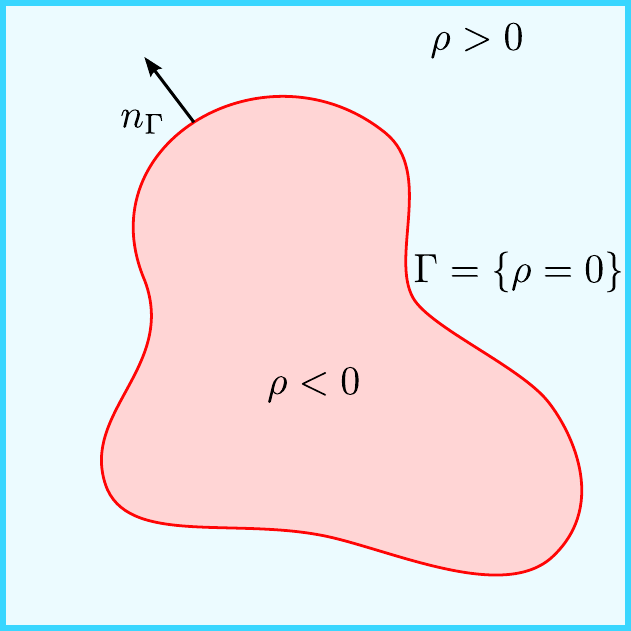}
    \hspace{0.03\textwidth}
    \includegraphics[width=0.45\textwidth]{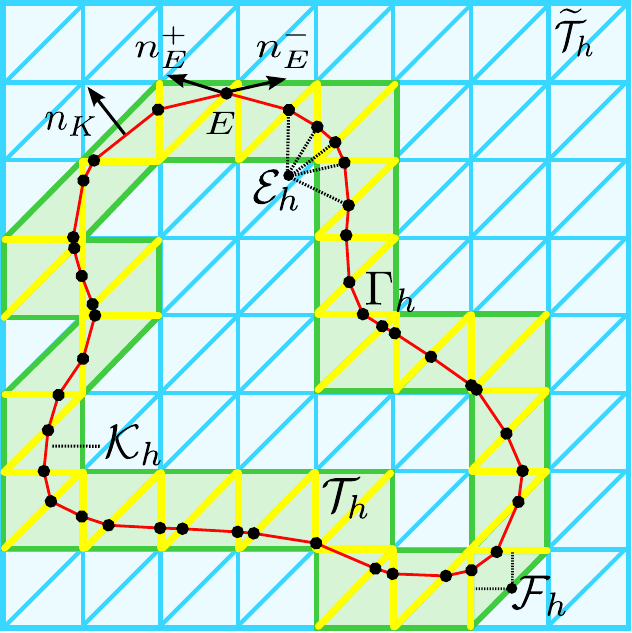}
  \end{center}
  \caption{Domain set-up}
  \label{fig:domain-set-up}
\end{figure}
We observe that the active background mesh $\mcT_h$ gives raise to a discrete or approximate
$h$-tubular neighborhood of $\Gamma_h$, which we denote by
\begin{align}
 N_h = \cup_{T \in \mcT_h} T
\end{align}
Note that for all elements
$T \in \mcT_h$ there is a neighbor $T'\in \mcT_h$ such that $T$ and
$T'$ share a face. 

Finally, let
\begin{equation}
V_h = \bigoplus_{T \in \mcT_h} P_1(T)
\label{eq:Vh-def}
\end{equation}
be the space of discontinuous piecewise linear polynomials defined
on $\mcT_h$ and define the subspace
%of $v
\begin{align}
  V_{h,0} = \{ v \in V_h : \lambda_{\Gamma_h}(v) = 0 \}
\label{eq:Vh0-def}
\end{align}
consisting of those $v \in V_h$ with zero average $\lambda_{\Gamma_h}(v) =
\int_{\Gamma_h} v$. Note that $V_{h,0}$
can be considered as the discrete version of $H^1(\Gamma)/\RR$.

\subsection{Discontinuous Galerkin Method}
\label{ssec:dg-method}
Before we formulate the cut discontinuous Galerkin method for the
Laplace-Beltrami problem~\eqref{eq:LB},
we introduce the notation of average and jump first. 
As the co-normal vectors $n_E^{\pm}$ are generally not collinear,
the average flux for a vector-valued, piecewise continuous function
$\sigma$ on $\mcK_h$ is defined by
\begin{align}
\mean{\sigma}|_E &= \sigma_E^{+} \cdot n^{+}_E - \sigma_E^{-} \cdot n^{-}_E
  \label{eq:mean-def}
\end{align}
while for any, possibly vector-valued, piecewise continuous function $w$ on
$\mcK_h$, the jump across an interior edge $E \in
\mcE_h$ is defined by
\begin{align}
\jump{w}|_E &= w_E^{+} - w_E^{-} 
\end{align}
with
$w(x)^\pm = \lim_{t\rightarrow 0^+} w(x - t n_E^{\pm})$.
Similarly, the jump across an interior face $F\in \mcF_h$ is
given by
\begin{align} 
\jump{w}|_F &= w_F^{+} - w_F^{-}.
\end{align}

Next, we define the bilinear form $A_h(\cdot,\cdot)$ by
\begin{equation}
A_h(v,w) = a_h(v,w) +  j_{h}(v,w) \quad \forall v,w \in V_h
\end{equation}
with
\begin{align}
  a_h(v,w) &= \sum_{K \in \mcK_h} (\nablash v, \nablash w)_{K}
  \nonumber
  \\ 
  &\qquad 
  - \sum_{E \in \mcE_h} (\mean{\nabla v}, \jump{w})_E
  - \sum_{E \in \mcE_h} (\jump{v}, \mean{\nabla w})_E
  \nonumber
  \\
  &\qquad + \sum_{E \in \mcE_h} \beta_E h^{-1}(\jump{v},\jump{w})_{E}
  \label{eq:ah-def}
  \\
  j_{h}(v,w) &= \sum_{F \in \mcF_h} 
  \beta_F h^{-2}(\jump{v},\jump{w})_{F}
  + \gamma (n_F\cdot \jump{\nabla v}, n_F \cdot \jump{\nabla w})_{F}
  \label{eq:jh-def}
\end{align}
where $\beta_E,\beta_F,\gamma$ are positive parameters.
The right hand side is given by linear form
\begin{equation}
l_h(v) = ( f^e, v )_{\Gamma_h}
\end{equation}
Then the cut discontinuous Galerkin finite element method for the
Laplace-Beltrami problem~\eqref{eq:LB}
takes the form: find  $u_h \in V_{h,0}$ such that
\begin{equation}
  A_h(u_h,v) = l_h(v) \quad \forall v \in V_{h,0}
\label{eq:weak-cutfem-formulation}
\end{equation}

We conclude this section by introducing suitable norms for the forthcoming stability
and error analysis. Here and throughout this work, any norm $\| \cdot
\|_{\mcP_h}$ involving a collection
of geometric entities $\mcP_h$ is defined in the usual way, i.e., by summing over all
entities: $\| \cdot \|_{\mcP_h}^2 = \sum_{P \in \mcP_h} \| \cdot
\|_{P}^2$.
For the discrete energy norm associated to the weak
formulation~\eqref{eq:weak-cutfem-formulation} we chose
\begin{align}
  \label{eq:discrete-energy-norm}
  \tn v \tn_h^2 &= \| \nablash v \|^2_{\mcK_h}
  + \|h^{-1/2} [v] \|^2_{\mcE_h}
  +  \|h^{-1} [v] \|^2_{\mcF_h}
  +\| n_F \cdot [\nabla v] \|^2_{\mcF_h}
\end{align}
and for the function space $(H^2(\Gamma))^e + V_h$, we define
\begin{align}
  \tn v \tn_{\ast,h}^2 =  \tn v \tn_h^2 + \|h^{1/2} \mean{\nablash v} \|_{\mcE_h}^2
\end{align}
For convenience, the part of the energy norm associated
with the face terms is summarized by the notation
\begin{equation}
  \tn v \tn^2_{\mcF_h} 
  = \|h^{-1} [v] \|^2_{\mcF_h}
  +\|n_F \cdot [\nabla v] \|^2_{\mcF_h}
  \label{eq:face-norm}
\end{equation}
Although it is not obvious from the definition, we will later prove in that the discrete energy 
norm~\eqref{eq:discrete-energy-norm} indeed defines a norm on
$V_{h,0}$, see Corollary~\ref{eq:discrete-poincare-Nh}.
\begin{remark}
  We point that thanks
  to inverse estimate~\eqref{eq:inverse-estimate-cut-on-E},
  the edge-based penalty term
  appearing in~\eqref{eq:ah-def} can be controlled by its face-based
  counterpart in the stabilization form~\eqref{eq:jh-def}.
  More precisely it holds 
  \begin{align}
    \| h^{-1/2} \jump{u} \|_{\mcE_h}
    \lesssim 
    \| h^{-1} \jump{u} \|_{\mcF_h}
  \end{align}
  and consequently, one might simplify formulation~\eqref{eq:ah-def}
  by setting $\beta_E = 0$ at the expanse of possibly requiring a larger value for $\beta_F$.
  \label{rem:simplified-ah}
\end{remark}

\subsection{Lifted Discontinuous Galerkin Form}
\label{ssec:lifted-dg-form}
The mismatch of the smooth surface $\Gamma$ and its discrete
counterpart $\Gamma_h$ gives raise to a geometric variational crime
in the proposed discretization scheme which must be
accounted for in the forthcoming a priori analysis.
An important instrument in assessing the error introduced by this
variational crime will be the following lifted version
of the bilinear form~\eqref{eq:ah-def}:
\begin{align}
a_h^l(v,w) &= \sum_{K^l \in \mcK^l_h} (\nablas v, \nablas w)_{K^l}
\\ \nonumber
&\qquad
- \sum_{E^l \in \mcE^l_h} (\mean{\nabla v}, \jump{w})_{E^l}
- \sum_{E^l \in \mcE^l_h} (\jump{v}, \mean{\nabla w} )_{E^l}
\\ 
&\qquad + \sum_{E^l \in \mcE^l_h} \beta_E h^{-1}(\jump{v},\jump{w})_{E^l}
\label{eq:lifted-dg-form}
\end{align}
where, referring to the notation in
Section~\ref{ssec:dg-method}, the average $\langle
\cdot \rangle$ is taken with respect to $n_{E^l}^{\pm}(x) \in
T_xK^{l,\pm}$; that is the unique co-normal vector which points outwards $K^{l,\pm}$
and is orthogonal to both the surface $n_{\Gamma}(x)$ and the tangential
space $T_xE^{l,\pm}$. 
With the definition of the lifted discontinuous Galerkin form,
it is clear that the
solution~$u$ of problem~\eqref{eq:LB} satisfies 
the following weak problem:
\begin{equation}
  a_h^l(u,v) = l(v) \quad \forall v \in V_h^l
  \label{eq:weakexact}
\end{equation}
For $v \in H^2(\Gamma) + V_h^l$,
the natural energy norm associated with this weak form is given by
\begin{align}
  \label{eq:discrete-energy-norm-lifted}
  \tn v \tn_{\ast}^2 &= \| \nablas v \|^2_{\mcK_h^l}
  + \|h^{-1/2} [v] \|^2_{\mcE_h^l}
  + \|h^{1/2} \langle \nablas  v \rangle \|^2_{\mcE_h^l}
\end{align}

\section{Domain Perturbation Related Estimates}
\label{sec:domain-perturbation}
The aim of this section is to collect the appropriate geometric
estimates which are necessary to quantify the error introduced by
the geometric variational crime, i.e.,
the piecewise linear approximation of the smooth surface $\Gamma$.
While developing face and edge-related estimates simultaneously, 
we essentially follow the presentations in
\citet{Dziuk1988,OlshanskiiReuskenGrande2009,DziukElliott2013}
and include without detailed proofs those results which are well known.
Here and throughout the remaining work, 
we write  $(\cdot, \cdot)_{\RR^d}$ and $\|\cdot \|_{\RR^d}$
to denote the standard scalar product and its associate norm in $\RR^d$. 

\subsection{Gradient of Lifted and Extended Functions}
Derivatives of extend and lifted functions can easily be computed by the chain rule, once
the derivative of the closest point projection is known.
Using the fact that $\nabla \rho = n_{\Gamma}$,
the derivative of the closest point projection computes to
\begin{equation}
Dp = I - n_{\Gamma} \otimes n_{\Gamma} - \rho \nabla \otimes \nabla \rho
\label{eq:derivative-closest-point-projection}
\end{equation}
where $\mcH = \nabla \otimes \nabla \rho = \nabla n_{\Gamma}$ denotes the Hessian of the signed distance function.
The symmetry of the Hessian and the projection $\Ps$ together with
simple fact that $0 = \nabla \| n_{\Gamma} \|_{\RR^d}^2 = 2 \nabla n_{\Gamma}^{T} n_{\Gamma}$ implies
that $\mcH n_{\Gamma} = 0$ and $(n_{\Gamma} \otimes n_{\Gamma}) \mcH = 0$ and therefore that $\mcH
\Ps = \mcH = \Ps \mcH$. 
This allows to
rewrite~\ref{eq:derivative-closest-point-projection} as
\begin{align}
Dp = \Ps (I - \rho \mcH) = \Ps - \rho \mcH
\label{eq:derivative-closest-point-projection-II}
\end{align}
and thus
\begin{align}
  Dv^e = D(v \circ p) = Dv Dp = Dv P_{\Gamma}(I - \rho \mcH)
\end{align}
Exploiting the self-adjointness of $\Ps$, $\Psh$, and $\mcH$,
we have for any vector $a \in \RR^{d}$
\begin{align}
  (
  \nablash v^e, a 
  )_{\RR^d}
  \nonumber
&=
  (
  \nabla v^e, \Psh a 
  )_{\RR^d}
  \nonumber
\\
&= Dv^e \Psh a 
= Dv \Ps (I - \rho \mcH) \Psh a
= (\nabla v,  \Ps(I - \rho \mcH) \Psh a
\\
&= ( \Psh (I - \rho \mcH) \Ps \nabla v, a )_{\RR^d}
\end{align}
that is
\begin{align}
  \nablash v^e = B^{T} \nablas v
\end{align}
where we introduced the invertible linear mapping
\begin{align}
  B = P_{\Gamma}(I - \rho H) P_{\Gamma_h}: T_x(K) \to T_{p(x)}(\Gamma)
  \label{eq:B-def}
\end{align}
which maps the tangential space of $K$ at $x$ to the tangential space of $\Gamma$ at
$p(x)$. Setting $v = w^l$ and using the identity $(w^l)^e = w$, we immediately get that
\begin{align}
  \nablas w^l = B^{-T} \nablash w
\end{align}
for any elementwise differentiable function $w$ on $\Gamma_h$ lifted to $\Gamma$.
From its definition~\eqref{eq:B-def}, it is easy to derive norm bounds for
operators involving $B$ once $\mcH$ can be controlled.
We recall from \cite[Lemma 14.7]{GilbargTrudinger2001}
that for $x\in U_{\delta_0}(\Gamma)$, the Hessian $\mcH$
admits a representation
\begin{equation}\label{Hform}
  \mcH(x) = \sum_{i=1}^d \frac{\kappa_i^e}{1 + \rho(x)\kappa_i^e}a_i^e \otimes a_i^e
\end{equation}
where $\kappa_i$ are the principal curvatures with corresponding
principal curvature vectors $a_i$.
Thus
\begin{equation}
  \|\mcH\|_{L^\infty(U_{\delta_0}(\Gamma))} \lesssim 1
  \label{eq:Hesse-bound}
\end{equation}
for $\delta_0 > 0$ small enough and as an almost
immediate consequence, we have the
following estimates summarized in
\begin{lemma} It holds
  \label{lem:BBTbound}
  \begin{equation}
    \| B \|_{L^\infty(\Gamma_h)} \lesssim 1,
    \quad \| B^{-1} \|_{L^\infty(\Gamma)} \lesssim 1,
    \quad
    \| P_\Gamma - B B^T \|_{L^\infty(\Gamma)} \lesssim h^2
    \label{eq:BBTbound}
  \end{equation}
\end{lemma}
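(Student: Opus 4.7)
The plan is to verify the three estimates in turn, relying on the factored form $B = \Ps(I - \rho\mcH)\Psh$ together with the identities $\Ps \mcH = \mcH = \mcH \Ps$ and $\mcH n = 0$ established immediately above the statement, plus the quantitative inputs $|\rho|\lesssim h^2$ and $\|n^e - n_h\|_{L^\infty(\Gamma_h)}\lesssim h$ from~\eqref{eq:geometric-assumptions-II} and the Hessian bound $\|\mcH\|_{L^\infty(U_{\delta_0}(\Gamma))}\lesssim 1$ from~\eqref{eq:Hesse-bound}. The first estimate is then immediate: the orthogonal projections $\Ps$ and $\Psh$ have operator norm one, and $\|I - \rho\mcH\|_{L^\infty(\Gamma_h)}\leq 1 + \|\rho\|_{L^\infty(\Gamma_h)}\|\mcH\|_{L^\infty}\lesssim 1$, so $\|B\|_{L^\infty(\Gamma_h)}\lesssim 1$.

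For the invertibility bound, my plan is to establish a uniform lower bound for $B$ on $T_x(K)$, which suffices since $\dim T_x(K) = \dim T_{p(x)}\Gamma = d-1$ and the range of $B$ is contained in $T_{p(x)}\Gamma$. Given $w \in T_x(K)$, the orthogonality $w\cdot n_h = 0$ combined with~\eqref{eq:geometric-assumptions-II} yields $|n^e \cdot w| = |(n^e - n_h)\cdot w|\lesssim h\|w\|$, so that $\|\Ps w\|^2 = \|w\|^2 - (n^e\cdot w)^2 \geq (1 - Ch^2)\|w\|^2$. Using $\Ps\mcH = \mcH$ one may rewrite $Bw = \Ps w - \rho\mcH w$, and the triangle inequality gives
\begin{equation*}
\|Bw\| \geq \|\Ps w\| - |\rho|\,\|\mcH w\| \geq \sqrt{1 - Ch^2}\,\|w\| - Ch^2\|w\| \geq \tfrac{1}{2}\|w\|
\end{equation*}
for $h \leq h_0$ sufficiently small, hence $\|B^{-1}\|_{L^\infty(\Gamma)}\lesssim 1$.

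For the third bound I would expand $BB^T$ algebraically. By self-adjointness of $\Ps$, $\Psh$ and $\mcH$,
\begin{equation*}
BB^T = \Ps(I - \rho\mcH)\Psh(I - \rho\mcH)\Ps,
\end{equation*}
and splitting $\Psh = I - n_h\otimes n_h$ separates a bulk part and a rank-one part. Using $\Ps\mcH = \mcH\Ps = \mcH$, the bulk part collapses to
\begin{equation*}
\Ps(I - \rho\mcH)^2\Ps = \Ps - 2\rho\mcH + \rho^2\mcH^2,
\end{equation*}
so its distance from $\Ps$ is already $O(h^2)$ by $|\rho|\lesssim h^2$ and $\|\mcH\|\lesssim 1$. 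The rank-one contribution equals $v\otimes v$ with $v := \Ps(I - \rho\mcH)n_h$; here the key observation is that the exact identity $\mcH n = 0$ forces $\mcH n_h = \mcH(n_h - n^e)$, and similarly $\Ps n^e = 0$ forces $\Ps n_h = \Ps(n_h - n^e)$. Each factor then contributes an $O(h)$ by~\eqref{eq:geometric-assumptions-II}, giving $\|v\|\lesssim h$ and hence $\|v\otimes v\|\lesssim h^2$.

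The main subtlety, as isolated above, is the rank-one term: a crude estimate treating $n_h$ as a generic unit vector would yield only $O(1)$. The gain relies entirely on the almost-orthogonality between $n_h$ and both the exact tangent plane and the kernel of $\mcH$, both of which are quantified by the normal approximation estimate in~\eqref{eq:geometric-assumptions-II}. Everything else is routine algebraic bookkeeping once the factored form of $Dp$ and the identities $\Ps\mcH = \mcH\Ps = \mcH$, $\mcH n = 0$ are in hand.
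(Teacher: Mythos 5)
Your proposal is correct and follows essentially the same route as the paper's (sketched) proof: the first bound by submultiplicativity, and the third by reducing $BB^T$ to $\Ps\Psh\Ps + O(h^2)$ and exploiting that $\|n^e - n_h\|\lesssim h$ enters quadratically — your rank-one splitting $\Ps - \Ps\Psh\Ps = (\Ps n_h)\otimes(\Ps n_h)$ is just a reorganization of the paper's identity $\Ps - \Ps\Psh\Ps = \Ps(\Ps-\Psh)^2\Ps$. Your treatment of $\|B^{-1}\|$ is in fact more complete than the paper's, which asserts it "follows directly" without noting that the uniform lower bound for $\Ps$ on $T_x(K)$ requires the normal approximation estimate, exactly as you supply it.
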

\begin{proof}
  The proof is standard, see~\citet{DziukElliott2013}, and only
  sketched here for completeness.
  The first two bounds follow directly from~\eqref{eq:B-def} and
  ~\eqref{eq:Hesse-bound} together with the assumption
  $\|\rho\|_{L^{\infty}(\Gamma_h)} \lesssim h^2$.
  Similarly, it follows that
  $
  \Ps - B B^T = \Ps - \Ps \Psh \Ps + O(h^2).
  $
  An easy calculation now shows that
  $\Ps - \Ps \Psh \Ps = \Ps (\Ps - \Psh)^2 \Ps$ 
  from which the desired bound follows by observing that
  $
  \Ps - \Psh = 
  (n - n_h)  \otimes n
  +
  n_h \otimes (n - n_h)
  $ and thus 
  $ 
  \| (\Ps - \Psh)^2 \|_{L^{\infty}(\Gamma_h)} \lesssim
  \| n - n_h \|_{L^{\infty}(\Gamma_h)}^2 \lesssim h^2
  $
\end{proof}

\subsection{Change of the Integration Domain}
Next, we derive estimates for the change of the Riemannian measure
when surface and edge integrals are lifted from the discrete surface
to the continuous surface and vice versa.
For this purpose, we define the quotient of measures
$|B|_{d-1}$ and $|B|_{d-2}$ by 
\begin{align}
  \dK^l = |B|_{d-1} \dK, \quad
  \dE^l = |B|_{d-2} \dE 
\end{align}
where $\dK$ and $\dE$ denotes the measure on 
$K \in \mcK_h$ and $E \in \mcE_h$, respectively,
while the corresponding measures on the lifted
entities $K^l \in \mcK_h^l$ and $E^l \in \mcE_h^l$
are denoted by $\dK^l$ and $\dE^l$, respectively.
Picking an element $K$ and one of its boundary edges $E$
together with its outer co-normal $n_E$,
we can assume (after some rigid motion)
that $K \subset \RR^{d-1} \times \{0\}$ and
$E \subset \RR^{d-2} \times \{0,0\}$ and
that $n_{K} = e_{d}$ and $n_E = e_{d-1}$.
In this coordinate system, we have $\dK = \dx_1 \dots \dx_{d-1}$ and 
$\dE = \dx_1 \dots \dx_{d-2}$.

Recall that for any smooth parametrized $k$-dimensional sub-manifold
$M \subset \RR^{d}$
together with a smooth parametrization $p : U \subset \RR^k \to
\RR^{d}$  satisfying $p(U) = M$,
the integral $\int_{M} f \dM$
of a function $f : M \to \RR$
is given by
\begin{align}
  \int_M f \dM = \int_U f(p(x)) \sqrt{\det(g_{ij}(x))}\dx,
  \label{eq:riemannian-integral}
\end{align}
where $g_{ij} = ( \partial_i p, \partial_j p )_{\RR^d}$ $i,j = 1,
\dots,k$ is the first fundamental form of $M$ in local
coordinates. Considering the cases $M = K^l$, $U = K$ and $M = E^l$ and $U = E$
with the closest point projection $p$ as parametrization, we see that
$|B|_{d-1}$ and $|B|_{d-2}$ are precisely given by the $d-1$ and $d-2$ volumes
\begin{align}
  | B |_d = \sqrt{\det((g_{ij}))_{i,j=1}^{d-1}}
  \label{eq:element-measure-distortion}
  \\
  |B|_{d-2} = \sqrt{\det((g_{ij}))_{i,j=1}^{d-2}}
  \label{eq:edge-measure-distortion}
\end{align}
Instead of calculating these volumes exactly, we derive a simple asymptotic
representation in $h$.
Combining the
representation~\eqref{eq:derivative-closest-point-projection-II} of
$Dp$ with the fact that the choice of our coordinate system implies 
\begin{align}
  n_{\Gamma}^i = ( n_{\Gamma}, e_i )_{\RR^d} \lesssim h, 
  \quad i = 1,\dots, d-1, \text{ and thus} \quad
  n_{\Gamma}^d \sim 1 + O(h^2)
\end{align}
allows us to derive an asymptotic expression for the coefficients
$g_{ij}$ for $i,j= 1,\dots,d-1$:
\begin{align}
  g_{ij}  &= 
  (
 (P_{\Gamma} - \rho \mcH)e_i, 
 (P_{\Gamma} - \rho \mcH)e_i
 )_{\RR^d}
 \\
 &=
 ( P_{\Gamma} e_i, P_{\Gamma} e_i )_{\RR^d}+ O(h^2)
\\
&=
 \delta_{ij}  - n_{\Gamma}^i n_{\Gamma}^j + O(h^2)
\\
 &\sim
\delta_{ij} + O(h^2).
\end{align}
Consequently, $\sqrt{\det(g_{ij})} \sim \sqrt{1 + O(h^2)} \sim 1 + O(h^2)$ and thus
\begin{align}
  | B |_d \sim 1 + O(h^2)
  \label{eq:element-measure-distortion-estimate}
  \\
  | B |_{d-1} \sim 1 + O(h^2)
  \label{eq:edge-measure-distortion-estimate}
\end{align}
which leads directly to the bounds summarized in the next Lemma.
\begin{lemma} The following estimates hold
  \label{lem:detBbounds}
  \begin{alignat}{5}
  \| 1 - |B|_{d-1} \|_{L^\infty(\mcK_h)} 
  &\lesssim h^2, 
  & &\qquad
  \||B|_d\|_{L^\infty(\mcK_h)} 
  &\lesssim 1, 
  & &\qquad
  \||B|_d^{-1}\|_{L^\infty(\mcK_h)} 
  &\lesssim 1
    \label{eq:detBbound}
    \\
  \| 1 - |B|_{d-2} \|_{L^\infty(\mcE_h)} 
  &\lesssim h^2, 
  & &\qquad
  \||B|_{d-2}\|_{L^\infty(\mcE_h)} 
  &\lesssim 1, 
  & &\qquad
  \||B|_d^{-1}\|_{L^\infty(\mcE_h)} 
  &\lesssim 1
  \label{eq:detBEbound}
\end{alignat}
\end{lemma}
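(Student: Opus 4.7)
The plan is to read off all six bounds from the asymptotic expansion $\sqrt{\det(g_{ij})} = 1 + O(h^2)$ derived in the paragraph immediately preceding the statement, applied once for the $(d-1)$-dimensional Gram matrix on a surface element $K\in\mcK_h$ and once for its $(d-2)$-dimensional restriction to a boundary edge $E\subset\partial K$. Since the asymptotic computation has essentially been carried out already, what remains is bookkeeping.

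First I would recall from~\eqref{eq:element-measure-distortion} and~\eqref{eq:edge-measure-distortion} that $|B|_{d-1}$ and $|B|_{d-2}$ are by construction the square roots of the Gram determinants of $Dp$ restricted to $T_xK$ and $T_xE$, respectively. Starting from $Dp = \Ps - \rho\mcH$ in~\eqref{eq:derivative-closest-point-projection-II}, and invoking $\|\rho\|_{L^\infty(\Gamma_h)}\lesssim h^2$ from~\eqref{eq:geometric-assumptions-II} together with the Hessian bound~\eqref{eq:Hesse-bound}, the entries reduce to $g_{ij} = (\Ps e_i, \Ps e_j)_{\RR^d} + O(h^2)$. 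In the adapted local frame with $n_K = e_d$ and $n_E = e_{d-1}$, the normal-error estimate $\|n^e - n_h\|_{L^\infty(\Gamma_h)}\lesssim h$ gives $(n_\Gamma, e_i)_{\RR^d}\lesssim h$ for $i\leq d-1$, so $g_{ij} = \delta_{ij} + O(h^2)$ for $i,j\in\{1,\dots,d-1\}$ and, in particular, for $i,j\in\{1,\dots,d-2\}$. A Taylor expansion of the square root then yields $|B|_{d-1} = 1 + O(h^2)$ on $\mcK_h$ and $|B|_{d-2} = 1 + O(h^2)$ on $\mcE_h$, which are precisely the sharp estimates in the first column of~\eqref{eq:detBbound} and~\eqref{eq:detBEbound}.

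The remaining upper and lower bounds follow immediately: from $|B|_{d-1} = 1 + O(h^2)$ one reads off a uniform upper bound, and for $h\leq h_0$ with $h_0$ chosen sufficiently small also a uniform lower bound away from zero, so that $|B|_{d-1}^{-1}$ is controlled as well; the verbatim argument handles $|B|_{d-2}$. There is no genuine obstacle here. The only point worth highlighting is that the element and edge Gram determinants are extracted from the same matrix $(g_{ij})$ written in the adapted local frame, so a single coordinate calculation covers both cases simultaneously. The lemma is therefore best viewed as a packaging of the two geometric assumptions in~\eqref{eq:geometric-assumptions-II} into a form ready to be used whenever surface or edge integrals are lifted from $\Gamma_h$ to $\Gamma$.
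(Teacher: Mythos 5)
Your proposal is correct and follows essentially the same route as the paper: the paper itself treats Lemma~\ref{lem:detBbounds} as an immediate consequence of the asymptotic computation $g_{ij}=\delta_{ij}+O(h^2)$ carried out in the preceding paragraphs, which is exactly the calculation you reproduce, including the use of $\|\rho\|_{L^\infty(\Gamma_h)}\lesssim h^2$, the Hessian bound, and the adapted frame giving $(n_\Gamma,e_i)_{\RR^d}\lesssim h$ for $i\leq d-1$. Your remark that the boundedness of the inverses requires $h\leq h_0$ small enough is a point the paper leaves implicit, so nothing is missing.
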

We point out that
estimates similar to~\eqref{eq:detBEbound} 
were derived in the finite volume context in
\citet{GiesselmannMueller2014} and
used in \citet{DednerMadhavanStinner2013}.

Next, we note that by combining
the estimates for the operator norms~\eqref{eq:BBTbound} and
for the metric distortion
factors~\eqref{eq:detBbound}--\eqref{eq:detBEbound},
the following norm equivalences can be obtained:
\begin{lemma}
  Let $v \in L^2(\mcP_h)$ and $w \in L^2(\mcP_h^l)$
  for $\mcP_h \in \{ \mcK_h, \mcE_h \}$.
  Then it holds
  \label{lem:norm-equivalences}
  \begin{equation}
    \| v^l \|_{\mcP_h^l}
    \sim \| v \|_{\mcP_h}, 
    \qquad \| w \|_{\mcP_h^l} \sim
    \| w^e \|_{\mcP_h}
  \label{eq:norm-equivalences-l2}
  \end{equation}
  If in addition $v \in H^1(\mcK_h)$ and $w \in H^1(\mcK_h^l)$
  the following equivalences are satisfied
  \begin{equation}
    \| \nabla_\Gamma v^l \|_{\mcK_h^l} \sim \| \nablash v \|_{\mcK_h},
    \qquad 
    \| \nablas w \|_{\mcK_h^l} \sim
    \| \nablash w^e \|_{\mcK_h}
  \label{eq:norm-equivalences-h1}
  \end{equation}
\end{lemma}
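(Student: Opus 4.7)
The plan is to derive all four equivalences as a direct consequence of the change-of-variables formula combined with the operator and measure-distortion bounds established in Lemmas \ref{lem:BBTbound} and \ref{lem:detBbounds}. The geometric content has already been absorbed by these preceding lemmas, so the remaining work is essentially bookkeeping: transforming integrals from lifted entities to their discrete counterparts and pulling the distortion factors out in $L^\infty$.

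For the $L^2$ equivalences I would write, using $(v^l)^e = v$ and the defining relations $\dK^l = |B|_{d-1}\dK$ and $\dE^l = |B|_{d-2} \dE$,
\begin{equation}
\| v^l \|_{\mcP_h^l}^2 = \int_{\mcP_h^l} |v^l|^2 \dM^l = \int_{\mcP_h} |v|^2 |B|_{d-k}\dM,
\end{equation}
with $k=1$ for $\mcP_h = \mcK_h$ and $k=2$ for $\mcP_h = \mcE_h$. Since Lemma \ref{lem:detBbounds} gives $|B|_{d-k} \sim 1$ uniformly on $\mcP_h$, extracting this factor in the $L^\infty$ norm yields $\|v^l\|_{\mcP_h^l} \sim \|v\|_{\mcP_h}$ at once. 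The companion equivalence $\|w\|_{\mcP_h^l} \sim \|w^e\|_{\mcP_h}$ follows by the same argument applied to $w^e$, using $(w^e)^l = w$.

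For the $H^1$ equivalences I would additionally invoke the gradient transformation derived earlier in the section, namely $\nablas v^l = B^{-T} \nablash v$ for $v$ defined on $\mcK_h$ and, symmetrically, $\nablash w^e = B^T \nablas w$ for $w$ defined on $\mcK_h^l$. Combining the change-of-variables formula with the operator bounds $\|B\|_{L^\infty(\Gamma_h)} \lesssim 1$ and $\|B^{-1}\|_{L^\infty(\Gamma)} \lesssim 1$ from Lemma \ref{lem:BBTbound}, together with $|B|_{d-1}\sim 1$, gives
\begin{equation}
\| \nablas v^l \|_{\mcK_h^l}^2 = \int_{\mcK_h} |B^{-T} \nablash v|^2 |B|_{d-1}\dK \lesssim \| \nablash v \|_{\mcK_h}^2,
\end{equation}
and the reverse inequality is obtained by writing $\nablash v = B^{T} \nablas v^l$ and proceeding analogously. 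The same template applied to $w^e$ delivers $\|\nablas w\|_{\mcK_h^l} \sim \|\nablash w^e\|_{\mcK_h}$. I do not anticipate any serious obstacle: the only point that requires care is keeping the directions of pull-back and push-forward consistent with the corresponding gradient identity, which is handled cleanly by the relations $(v^l)^e = v$ and $(w^e)^l = w$.
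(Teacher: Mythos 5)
Your argument is correct and is precisely the route the paper intends: the paper states the lemma without proof, remarking only that it follows by combining the operator-norm bounds of Lemma~\ref{lem:BBTbound} with the measure-distortion bounds of Lemma~\ref{lem:detBbounds}, and your change-of-variables computation together with the gradient identities $\nablas v^l = B^{-T}\nablash v$ and $\nablash w^e = B^{T}\nablas w$ supplies exactly the missing bookkeeping. No gaps.
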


Before we proceed, let us note that the measure quotients
~\eqref{eq:element-measure-distortion}--\eqref{eq:edge-measure-distortion-estimate}
can be equivalently expressed using the exterior product of the
tangential vectors $\partial_i p$, $i = 1,\dots,d-1$ and the surface
normal $n_{\Gamma}$.  More precisely,
\begin{align}
  | B |_{d-1} &= \| \partial_1 p \wedge \ldots \wedge \partial_{d-1} p \|
  \label{eq:element-measure-distortion-II}
  \\
  | B |_{d-2} &= \| \partial_1 p \wedge \ldots \partial_{d-2} p \wedge
  n_{\Gamma} \|
  \label{eq:edge-measure-distortion-II}
\end{align}
where for given vectors $v_1, \dots, v_{d-1} \in \RR^{d}$
the outer product $ v_1 \wedge \ldots \wedge v_{d-1}$
is the unique vector satisfying
\begin{align}
  \det(v_1,\ldots,v_{d-1},w) = 
  ( v_1 \wedge \ldots \wedge
  v_{d-1}, w 
  )_{\RR^d}
  \quad \forall w \in \RR^{d}.
  \label{eq:def-exterior-product}
\end{align}
With this in mind we can now easily
estimate the deviation of the lifted co-normal vector $n_E^l$
from the co-normal vector $n_{E^l}$ associated with the lifted edge $E^l$.
\begin{lemma}
    \label{lem:cornormal-lifted-bound}
It holds
  \begin{gather}
    \| B^l n_{E}^l - |B|_{d-2} n_{E^l} \|_{L^{\infty}(E^l)} \lesssim h^2,
    \label{eq:cornormal-lifted-bound}
  \end{gather}
\end{lemma}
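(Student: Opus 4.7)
The plan is to reduce the bound to a pointwise statement at a fixed $x \in E^l$ by choosing convenient local Euclidean coordinates. Specifically, after a rigid motion and using the notation of the paragraph preceding~\eqref{eq:riemannian-integral}, I would assume $K \subset \RR^{d-1} \times \{0\}$, $E \subset \RR^{d-2} \times \{0\} \times \{0\}$, $n_K = e_d$ and $n_E = e_{d-1}$. Then $P_{\Gamma_h} e_i = e_i$ for $i = 1, \dots, d-1$, so from~\eqref{eq:derivative-closest-point-projection-II} and the definition of $B$ one has $\partial_i p = Dp\, e_i = B e_i$ for $i=1,\dots,d-1$. In particular $B^l n_E^l$ pulls back to $\partial_{d-1} p$, and the task becomes estimating $\|\partial_{d-1} p - |B|_{d-2}\, n_{E^l}\|_{\RR^d}$ at each point of $E^l$.

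Next I would give a coordinate-free reinterpretation of the second term via the exterior-product identity~\eqref{eq:edge-measure-distortion-II}. Setting $w := \partial_1 p \wedge \cdots \wedge \partial_{d-2} p \wedge n_\Gamma$, this vector has three decisive properties: it is perpendicular to $n_\Gamma$ (hence lies in $T_{p(x)}\Gamma$), it is perpendicular to every $\partial_i p$ with $i \leqslant d-2$ (hence perpendicular to $T E^l$ inside $T_{p(x)}\Gamma$), and its Euclidean norm equals $|B|_{d-2}$. Since $Dp$ is a small perturbation of the identity on tangent planes, it preserves the outward orientation of the co-normal. Fixing signs accordingly gives the clean identity $w = |B|_{d-2}\, n_{E^l}$, reducing the lemma to $\|\partial_{d-1} p - w\|_{\RR^d} \lesssim h^2$.

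The last step is a projection-and-expansion argument. I would decompose $\partial_{d-1} p \in T_{p(x)}\Gamma$ against the orthonormal basis consisting of $n_{E^l}$ and an orthonormal frame of $TE^l$. The computation that produced~\eqref{eq:element-measure-distortion-estimate} already supplies $g_{ij} = \langle \partial_i p, \partial_j p\rangle_{\RR^d} = \delta_{ij} + O(h^2)$, which immediately controls the components of $\partial_{d-1} p$ along $TE^l$: these are linear combinations of the Gram entries $g_{d-1, i}$ with $i \leqslant d-2$, each of size $O(h^2)$. The component along $n_{E^l}$ has magnitude $\sqrt{g_{d-1,d-1} - O(h^4)} = 1 + O(h^2)$. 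Combining this with $|B|_{d-2} = 1 + O(h^2)$ supplied by Lemma~\ref{lem:detBbounds} yields $\|\partial_{d-1} p - |B|_{d-2}\, n_{E^l}\|_{\RR^d} \lesssim h^2$ uniformly on $E^l$.

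The main technical obstacle I anticipate is the orientation bookkeeping in the second step: ensuring that the wedge product $w$ equals $+|B|_{d-2}\, n_{E^l}$ rather than its negative requires tracking the outward-pointing convention for $n_E$ through the diffeomorphism $p$ and the chosen ordering of tangent vectors. Once this sign is pinned down, everything else is a routine $h$-expansion built from estimates already available in Section~\ref{sec:domain-perturbation}.
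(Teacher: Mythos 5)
Your proof is correct, but it takes a genuinely different route from the paper's. Both arguments start from the same rigid-motion normalization and both invoke the exterior-product identity~\eqref{eq:edge-measure-distortion-II}, but the core computations diverge. The paper expands \emph{both} vectors componentwise in the ambient basis $e_1,\dots,e_d$: the coefficients of $B^l n_E^l$ come from $\Ps(I-\rho\mcH)e_{d-1}$, those of $|B|_{d-2}\,n_{E^l}$ from the determinant formula~\eqref{eq:def-exterior-product} applied to $\det(Dp\,e_1,\dots,Dp\,e_{d-2},n_\Gamma,e_i)$, and the crucial $i=d$ component requires an explicit cancellation $n_\Gamma^{d-1}-n_\Gamma^{d-1}n_\Gamma^{d}=n_\Gamma^{d-1}(1-n_\Gamma^{d})$, since each term separately is only $O(h)$. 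You instead observe that $B n_E=\partial_{d-1}p$ and that both $\partial_{d-1}p$ and $n_{E^l}$ lie in $T_{p(x)}\Gamma$, and then decompose the difference in an orthonormal frame of $T_{p(x)}\Gamma$ adapted to $E^l$; the off-diagonal Gram entries $g_{d-1,i}=-n_\Gamma^{d-1}n_\Gamma^i+O(h^2)=O(h^2)$ control the tangential-to-$E^l$ components, and Pythagoras plus $g_{d-1,d-1}=1+O(h^2)$ and Lemma~\ref{lem:detBbounds} control the co-normal component. This buys you two things: the delicate cancellation is absorbed automatically by working intrinsically in the tangent plane rather than in ambient coordinates, and the wedge-product step becomes essentially optional (you only need $\|n_{E^l}\|=1$ and $|B|_{d-2}=1+O(h^2)$, both already available). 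The price is the orientation bookkeeping you flag: you must verify $\langle Dp\,n_E, n_{E^l}\rangle>0$, which follows since the diffeomorphism $p:K\to K^l$ maps outward-pointing vectors at $E$ to outward-pointing vectors at $E^l$, and any outward-pointing vector of $K^l$ in $T_{p(x)}\Gamma$ has positive component along $n_{E^l}$. With that sign pinned down, your argument is complete and uniform in $x$, as required.
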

\begin{proof}
  Expanding the scaled co-normal and lifted discrete co-normal
  vectors in the standard orthonormal basis $e_1, \ldots, e_{d}$,
  it suffices to show that the resulting expansion coefficients satisfy the
  estimates
  \begin{align}
    (B^l n_E^l - |B|_{d-2}^l n_{E^l}, e_i )_{\RR^d}\lesssim h^2 
    \quad i = 1,\ldots, d.
  \end{align}
  To compute $( B^l n_E^l, e_i )_{\RR^d}$, we simply calculate
  \begin{align}
    ( B^l n_E^l, e_i )_{\RR^d}
    &= 
    ( \Ps (I - \rho \mcH) e_{d-1}, e_i )_{\RR^d}
    =
    ( \Ps e_{d-1}, e_i )_{\RR^d} + O(h^2)
    \\
    &= \delta_{d-1,i}  - n_{\Gamma}^{d-1} n_{\Gamma}^i + O(h^2)
    \\
    &=
    \begin{cases}
      O(h^2),
      \quad i = 1,\ldots d-2
      \\
      1 + O(h^2),
      \quad i = d-1
      \\
      -n_{\Gamma}^{d-1} n_{\Gamma}^{d} + O(h^2),
      \quad i = d 
    \end{cases}
    \label{eq:discrete-conormal-asymptotics}
  \end{align}
  Now turning to
  $ -(|B|_{d-2}^l n_{E^l}, e_i )_{\RR^d}$, 
  we observe that due to~\eqref{eq:edge-measure-distortion-II},
  the fact that $-n_E^l$ is pointing inwards and 
  $n_E \perp \partial_1 p,\ldots, \partial_{d-2} p, n_{\Gamma}$, we have
  \begin{align}
    -|B|_{d-2} n_{E^l} = 
    \partial_1 p 
    \wedge
    \dots
    \wedge
    \partial_{d-2} p 
    \wedge n_{\Gamma},
  \end{align}
  and therefore by definition of the exterior product~\eqref{eq:def-exterior-product}
  \begin{align}
    - ( |B|_{d-2} n_{E^l}, e_i )_{\RR^d}
    &= 
    \det(D p e_1, \ldots, D p e_{d-2}, n_{\Gamma}, e_i)
    \\
    &=
    \det(\Ps e_1, \ldots, \Ps e_{d-2}, n_{\Gamma}, e_i) + O(h^2)
    \\
    &=
    \det(e_1, \ldots, e_{d-2}, n_{\Gamma}, e_i) + O(h^2)
    \\
    &=
    \label{eq:contnuous-conormal-asymptotics}
    \begin{cases}
      0 + O(h^2), \quad i = 1,\ldots, d-2, 
      \\
      - n_{\Gamma}^{d} + O(h^2) = - 1 + O(h^2), \quad i = d-1,
      \\
      n_{\Gamma}^{d-1} + O(h^2) \quad i = d.
    \end{cases}
  \end{align}
Now adding~\eqref{eq:discrete-conormal-asymptotics}
and~\eqref{eq:contnuous-conormal-asymptotics} 
we arrive at the desired estimate 
by observing that 
$n_{\Gamma}^{d-1} - n_{\Gamma}^{d-1} n_{\Gamma}^{d}
=  n_{\Gamma}^{d-1}( 1 - n_{\Gamma}^{d})
=  n_{\Gamma}^{d-1} \cdot O(h^2)$.
\end{proof}
The previous lemma roughly states that push-forwarding a properly
scaled discrete co-normal vector results in a good approximation of
the co-normal vector field along the lifted edge.
As a result we can prove the final lemma of this section.
\begin{lemma} For $v \in H^1(\Gamma)^e + V_h$ 
  it holds
  \label{lem:energy-norm-equivalences}
  \begin{equation}
    \tn v^l \tn_{\ast}
    \lesssim  
    \tn v \tn_{\ast, h}
  \label{eq:energy-norm-equivalences}
  \end{equation}
\end{lemma}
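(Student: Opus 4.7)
The plan is to bound each of the three terms composing $\tn v^l \tn_{\ast}^2$ separately in terms of quantities appearing in $\tn v \tn_{\ast,h}^2$, using systematically the transformation rules established in Section~\ref{sec:domain-perturbation}.

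First, for the gradient contribution $\| \nablas v^l \|^2_{\mcK_h^l}$, I would simply invoke the $H^1$-equivalence~\eqref{eq:norm-equivalences-h1} from Lemma~\ref{lem:norm-equivalences}, which immediately yields $\| \nablas v^l \|_{\mcK_h^l} \sim \| \nablash v \|_{\mcK_h}$. For the jump term $\| h^{-1/2}[v^l] \|^2_{\mcE_h^l}$, I would observe that lifting commutes pointwise with evaluation of one-sided traces (since $p$ is a bijection between corresponding edges and their lifts), so $[v^l] = [v]^l$, and then apply the $L^2$-equivalence~\eqref{eq:norm-equivalences-l2} on $\mcE_h$ to pull the norm back to the discrete edges.

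The hard part is bounding the mean-flux contribution $\| h^{1/2} \mean{\nablas v^l} \|^2_{\mcE_h^l}$, because the co-normals $n_{E^l}^{\pm}$ used in the lifted average live in the tangent planes $T_x K^{l,\pm}$ of $\Gamma$, while the discrete co-normals $n_E^{\pm}$ live in the tangent planes of $K^{\pm}$, which are distinct spaces. The key identity is $\nablas v^l = B^{-T}\nablash v$, yielding
\begin{equation*}
(\nablas v^l \cdot n_{E^l}^{\pm}) \circ p = \nablash v \cdot \bigl( B^{-1} (n_{E^l}^{\pm} \circ p)\bigr)
\quad \text{on } E^{\pm}.
\end{equation*}
From Lemma~\ref{lem:cornormal-lifted-bound} we have $\| B n_E^{\pm} - |B|_{d-2}(n_{E^l}^\pm \circ p)\|_{L^\infty(E^{\pm})} \lesssim h^2$; combining this with the bounds $\|B^{-1}\|_\infty \lesssim 1$ and $\|1-|B|_{d-2}\|_\infty \lesssim h^2$ from Lemmas~\ref{lem:BBTbound} and~\ref{lem:detBbounds} gives
\begin{equation*}
\bigl\| B^{-1}(n_{E^l}^{\pm}\circ p) - n_E^{\pm}\bigr\|_{L^\infty(E^{\pm})} \lesssim h^2.
\end{equation*}
Hence $(\nablas v^l \cdot n_{E^l}^\pm)\circ p = \nablash v \cdot n_E^{\pm} + R^{\pm}$ with $|R^{\pm}| \lesssim h^2 |\nablash v|$, so by the triangle inequality and~\eqref{eq:norm-equivalences-l2},
\begin{equation*}
\| h^{1/2} \mean{\nablas v^l}\|_{\mcE_h^l}
\lesssim \| h^{1/2}\mean{\nablash v}\|_{\mcE_h}
+ \| h^{5/2} \nablash v\|_{\mcE_h}.
\end{equation*}

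Finally, I would control the remainder $\|h^{5/2} \nablash v\|_{\mcE_h}$ by a standard discrete trace inequality $\|\nablash v\|_{E} \lesssim h^{-1/2}\|\nablash v\|_{K}$ applied on each $K$ adjacent to $E$ (valid on both the smooth part $H^1(\Gamma)^e$-component via the trace theorem, and on the piecewise linear $V_h$-component via an inverse estimate), yielding $\|h^{5/2}\nablash v\|_{\mcE_h} \lesssim h^2 \|\nablash v\|_{\mcK_h}$, which is absorbed into the gradient part of $\tn v\tn_{\ast,h}$. Summing the three contributions completes the proof. The main technical obstacle is the correct identification of the scaled co-normal transformation via Lemma~\ref{lem:cornormal-lifted-bound}; once this is in place, the rest is bookkeeping with the equivalences already established.
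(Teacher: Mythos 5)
Your decomposition and your treatment of the co-normal term coincide with the paper's (sketched) proof: the gradient and jump contributions are handled by the norm equivalences of Lemma~\ref{lem:norm-equivalences}, and the mean-flux term by writing $\nablas v^l = B^{-T}\nablash v$ and invoking Lemma~\ref{lem:cornormal-lifted-bound} together with Lemmas~\ref{lem:BBTbound} and~\ref{lem:detBbounds} --- exactly the computation the paper points to in \eqref{eq:conormal-flux-lifted-estimate-I}--\eqref{eq:conormal-flux-lifted-estimate-III}. Your derivation of $\| B^{-1}(n_{E^l}^{\pm}\circ p) - n_E^{\pm}\|_{L^\infty} \lesssim h^2$ from \eqref{eq:cornormal-lifted-bound} is correct.

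The one step I would not accept as written is the disposal of the remainder $\|h^{5/2}\nablash v\|_{\mcE_h}$ via the trace inequality $\|\nablash v\|_{E}\lesssim h^{-1/2}\|\nablash v\|_{K}$ on the cut elements $K=\Gamma_h\cap T$. That inequality is not uniform in the position of the cut: for $v\in V_h$ the tangential gradient is constant on $K$, so the constant is $(|E|/|K|)^{1/2}$, which blows up when $K$ is a sliver --- this is precisely the degeneracy the stabilization machinery of the paper is designed to circumvent, so it cannot be assumed away here. The robust route is through the background mesh: $\|\nabla v\|_{E}\lesssim h^{-1/2}\|\nabla v\|_{F}\lesssim h^{-1}\|\nabla v\|_{T}$ by \eqref{eq:inverse-estimate-cut-on-E} and \eqref{eq:inverse-estimates-boundary} for the $V_h$ part (and \eqref{eq:trace-inequality-cut-edges-II} for the smooth part, which in effect requires $H^2$ regularity for the edge traces of the gradient to make sense at all). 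This costs an extra factor $h^{-1/2}$ relative to your claim, but the remainder still carries enough powers of $h$: one is left with $h\cdot h^{1/2}\|\nabla v\|_{N_h}$, which is bounded by $h\,\tn v\tn_{\ast,h}$ using Lemma~\ref{lem:control-entire-grad-by-tang-grad}, i.e.\ the face stabilization terms contained in $\tn\cdot\tn_{\ast,h}$ are genuinely needed to absorb this remainder. With that replacement your argument is complete and is the same as the paper's.
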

\begin{proof} We only sketch the proof. Recalling the norm
  definitions~\eqref{eq:discrete-energy-norm} and
  \eqref{eq:discrete-energy-norm-lifted} and the norm
  equivalences from Lemma~\ref{lem:energy-norm-equivalences},
  it remains to show that 
  \begin{align}
    \| \mean{\nablas v^l} \|_{E^l} \sim \| \mean{\nabla v} \|_E
    \label{eq:cornormal-flux-equivalences}
  \end{align}
   where the mean incorporates the co-normal of the lifted edge $E^l$ and the
  discrete edge $E$, respectively. Lifting the $\mean{\nablash v}$ to $\Gamma$,
  equivalence~\eqref{eq:cornormal-flux-equivalences} can easily proven
  as in \eqref{eq:conormal-flux-lifted-estimate-I}--\eqref{eq:conormal-flux-lifted-estimate-III}
  by using the previous lemma to bound the deviation of the lifted discrete co-normal flux from
  the co-normal flux of the lifted edges.
\end{proof}

\section{Approximation Properties}
\label{sec:approximation-props}
Before we turn to the stability and a priori analysis in the next two
sections, we here
collect some standard inequalities 
and establish the appropriate interpolation estimates
which will be frequently used throughout the
remaining work. 

\subsection{Useful Inequalities}
First, we recall the following trace inequality for $v \in H^1(\mcT_h)$
\begin{equation}
  \| v \|_{\partial T} 
  \lesssim
  h^{-1/2} \|v \|_{T} +
  h^{1/2} \|\nabla v\|_{T}
  \quad \foralls T \in \mcT_h
  \label{eq:trace-inequality}
\end{equation}
If the intersection $\Gamma \cap T$ does not coincide with a boundary face of the mesh 
then the corresponding inequality
\begin{align}
  \| v \|_{\Gamma \cap T} 
  \lesssim
  h^{-1/2} \| v \|_{T} 
  + h^{1/2} \|\nabla v\|_{T} 
  \quad \foralls T \in \mcT_h
  \label{eq:trace-inequality-cut-faces}
\end{align}
holds whenever the surface $\Gamma$ is reasonably resolved by the
background mesh, i.e. the mesh size is chosen in accordance with the
local curvature, see \citet{HansboHansboLarson2003} for a proof.
Correspondingly, since the skeleton $\mcF_h$ consists
of shape-regular faces, we have
\begin{align}
  \| v \|_{E \cap F} 
  \lesssim
  h^{-1/2} \| v \|_{F} 
  + h^{1/2} \|\nabla v\|_{F}
  \quad \foralls E \in \mcE_h, \;
  \foralls F \in \mcF_h
  \label{eq:trace-inequality-cut-edges}
\end{align}
and if we iterate using~\ref{eq:trace-inequality} we see that for $v \in H^2(\mcT_h)$
\begin{align}
  \| v \|_{E \cap F} 
  \lesssim
  h^{-1} \| v \|_{T} 
  + \|\nabla v\|_{T}
  + h \|\nabla \otimes \nabla v\|_{T}
  \quad \foralls E \in \mcE_h, \;
  \foralls F \in \mcF_h
  \label{eq:trace-inequality-cut-edges-II}
\end{align}
In the following, we will also need the some
well-known inverse estimates for $v_h \in V_h$:
\begin{gather}
  \label{eq:inverse-estimate-grad}
  \| \nabla v_h\|_{T} 
  \lesssim
  h^{-1} 
  \| v_h\|_{T} \quad \foralls T \in \mcT_h
  \\
  \| v_h\|_{\partial T} 
  \lesssim
  h^{-1/2} 
  \| v_h\|_{T},
  \qquad 
  \| \nabla v_h\|_{\partial T} 
  \lesssim
  h^{-1/2} 
  \| \nabla v_h\|_{T} \quad \foralls T \in \mcT_h
  \label{eq:inverse-estimates-boundary}
\end{gather}
and the following ``cut versions'' 
(note that $K \cap T \not \subseteq \partial T$, $E \cap F \not
\subseteq \partial F$):
\begin{alignat}{5}
  \|v_h \|_{K \cap T} 
  &\lesssim
  h^{-1/2} \|v_h\|_{T},
  & & \qquad 
  \| \nabla v_h \|_{K \cap T} 
  &\lesssim
  h^{-1/2} \|\nabla v_h\|_{T}
  & &\quad \foralls K \in \mcK_h, \;
  \foralls T \in \mcT_h
  \label{eq:inverse-estimate-cut-v-on-K}
  \\
  \|v_h \|_{E \cap F}
  &\lesssim
  h^{-1/2} \|v_h\|_{F},
  & &\qquad
  \| \nabla v_h \|_{E \cap F} 
  &\lesssim
  h^{-1/2} \|\nabla v_h\|_{F}
  & &\quad \foralls E \in \mcE_h, \;
  \foralls F \in \mcF_h
  \label{eq:inverse-estimate-cut-on-E}
\end{alignat}
which are an immediate consequence of similar inverse estimates
presented in~\citet{HansboHansboLarson2003}.

\subsection{Interpolation Operators}
Next, we recall from~\citet{ScottZhang1990} that for
$v \in H^m(\mcT_h)$,
the standard Scott-Zhang interpolant $I_h:L^2(N_h) \rightarrow V_h$ satisfies the
local interpolation estimates
\begin{alignat}{3}
\| v - I_h v \|_{k,T} 
& \lesssim
  h^{l-k}| v |_{l,\omega(T)},
  & &\quad 0\leqslant k \leqslant l \leqslant \min\{2,m\} \quad &\foralls T\in \mcT_h
  \label{eq:interpest0}
  \\
\| v - I_h v \|_{l,F} &\lesssim h^{l-k-1/2}| v |_{l,\omega(F)},
  & &\quad 0\leqslant k \leqslant l - 1/2 \leqslant \min\{2,m\} - 1/2  \quad &\foralls F\in
  \mcF_{h} 
  \label{eq:interpest1}
\end{alignat}
where $\omega(T)$ denotes the patch of all elements sharing a
vertex with $T$. The patch $\omega(F)$ is defined analoguously.
Before we introduce a suitable interpolation operator $I_h:H^m(\Gamma) \to V_h$, we note that
by the coarea-formula (cf. \citet{EvansGariepy1992})
\begin{align*}
\int_{U_{\delta}} f(x) \,dx = \int_{-\delta}^{\delta} 
\left(\int_{\Gamma(r)} f(y,r) \, \mathrm{d} \Gamma_r(y)\right)\,\mathrm{d}r
\end{align*}
the extension operator $v^e$ defines a bounded operator
$H^m(\Gamma) \ni v \mapsto v^e \in H^m(U_{\delta}\Gamma))$
satisfying the stability estimate
\begin{align}
  \| v^e \|_{k,U_{\delta}(\Gamma)} \lesssim \delta^{1/2} \| v
  \|_{k,\Gamma}, \qquad 0 \leqslant k \leqslant m
\label{eq:stability-estimate-for-extension}
\end{align}
for $0 < \delta \leqslant \delta_0$ where the hidden constant depends only on the curvature of $\Gamma$.
With the help of the extension operator,
we construct an interpolation operator $I_h: H^m(\Gamma) \to V_h$ by
setting $I_h v = I_h v^e$ where we the liberty of using the same symbol.
Choosing $\delta_0 \sim h$,
it follows directly from combining the interpolation
estimate~\eqref{eq:interpest1} and the stability
estimate~\eqref{eq:stability-estimate-for-extension} that the following lemma holds:
\begin{lemma}
\label{lem:interpolenergy-lemma}
Setting $e_I = v^e - I_h v^e$ then for $v \in H^2(\Gamma)$, it holds that
\begin{align}
\label{eq:interpolenergy-lemma}
\sum_{F \in \mcF} 
\bigl(
h^{-2} \| e_I \|_{F}^2 
+ \| \nabla e_I \|_{F}^2
+ h^ 2 \| \nabla \otimes \nabla e_I \|_{F}^2
\bigr)
\lesssim  h^2 \| v \|_{2,\Gamma}^2
\end{align}
\end{lemma}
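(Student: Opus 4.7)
The plan is to reduce each of the three face-based terms to element-based $L^2$ quantities by applying the trace inequality \eqref{eq:trace-inequality} on an adjacent simplex, then to invoke the Scott–Zhang interpolation estimate \eqref{eq:interpest0} on that simplex, and finally to convert the resulting $|v^e|_{2,\cdot}$-terms into $\|v\|_{2,\Gamma}$ via the extension stability \eqref{eq:stability-estimate-for-extension} applied with $\delta\sim h$.

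More concretely, for each $F\in\mcF_h$ pick a neighboring element $T\in\mcT_h$ with $F\subset\partial T$. For the $h^{-2}\|e_I\|_F^2$ term, the trace inequality yields $\|e_I\|_F^2\lesssim h^{-1}\|e_I\|_T^2+h\|\nabla e_I\|_T^2$; the interpolation estimates $\|e_I\|_T\lesssim h^2|v^e|_{2,\omega(T)}$ and $\|\nabla e_I\|_T\lesssim h|v^e|_{2,\omega(T)}$ from \eqref{eq:interpest0} then give $h^{-2}\|e_I\|_F^2\lesssim h\,|v^e|_{2,\omega(T)}^2$. For $\|\nabla e_I\|_F^2$, the trace inequality produces in addition a term $h\|\nabla\otimes\nabla e_I\|_T^2$, but since $I_hv^e$ is linear on $T$ one has $\nabla\otimes\nabla e_I=\nabla\otimes\nabla v^e$ on $T$, which is controlled by $|v^e|_{2,T}$; combined with the estimate on $\|\nabla e_I\|_T$ this again yields $\|\nabla e_I\|_F^2\lesssim h\,|v^e|_{2,\omega(T)}^2$.

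The main technical obstacle is the third term, $h^2\|\nabla\otimes\nabla e_I\|_F^2$. As before $\nabla\otimes\nabla I_hv^e=0$ on $T$, so the term reduces to $h^2\|\nabla\otimes\nabla v^e\|_F^2$, and a pointwise trace of the Hessian on $F$ is delicate under the bare regularity $v\in H^2(\Gamma)$. The plan is to use that the closest–point extension $v^e$ inherits enough regularity from the smoothness of $\Gamma$ (via \eqref{eq:derivative-closest-point-projection-II} and the bound \eqref{eq:Hesse-bound} on $\mcH$) so that a further trace step applies on $T$, or equivalently to argue by density on smooth $v$ and then pass to the limit using the explicit $h$-weighting. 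One obtains $h^2\|\nabla\otimes\nabla v^e\|_F^2\lesssim h\,|v^e|_{2,\omega(T)}^2+h^3\,|v^e|_{3,\omega(T)}^2$, with the higher-order term absorbed by the same $h^2\|v\|_{2,\Gamma}^2$ bound after using the extension stability.

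Finally, summing the three elementwise bounds $\lesssim h\,|v^e|_{2,\omega(T)}^2$ over all $F\in\mcF_h$ and exploiting the finite overlap of the patches $\omega(T)$ yields $\lesssim h\,|v^e|_{2,N_h}^2$. Applying the extension stability \eqref{eq:stability-estimate-for-extension} with $\delta\sim h$ gives $|v^e|_{2,N_h}^2\lesssim h\,\|v\|_{2,\Gamma}^2$, and the product $h\cdot h=h^2$ produces exactly the claimed bound $h^2\|v\|_{2,\Gamma}^2$.
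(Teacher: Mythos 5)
Your treatment of the first two terms is correct and is in substance the same as the paper's: the paper simply quotes the face-based Scott--Zhang estimate \eqref{eq:interpest1} with $l=2$, $k=0,1$ and then applies the extension stability \eqref{eq:stability-estimate-for-extension} with $\delta\sim h$ to convert $h\,|v^e|^2_{2,\omega(F)}$, summed with finite overlap, into $h^2\|v\|^2_{2,\Gamma}$; your route via the trace inequality \eqref{eq:trace-inequality} on a neighboring element plus the element estimate \eqref{eq:interpest0} is just the standard derivation of that face estimate, so nothing is gained or lost there. Your observation that $\nabla\otimes\nabla e_I=\nabla\otimes\nabla v^e$ on each $T$ (since $I_hv^e$ is elementwise linear) is also the right way to absorb the second-order term produced by the trace inequality in the $\|\nabla e_I\|_F$ estimate.

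The gap is in your handling of the third term. The bound you propose,
\begin{equation*}
h^2\|\nabla\otimes\nabla v^e\|_F^2\;\lesssim\; h\,|v^e|_{2,\omega(T)}^2+h^3\,|v^e|_{3,\omega(T)}^2,
\end{equation*}
cannot be closed under the stated hypothesis $v\in H^2(\Gamma)$: the extension stability \eqref{eq:stability-estimate-for-extension} controls $\|v^e\|_{k,U_\delta(\Gamma)}$ by $\delta^{1/2}\|v\|_{k,\Gamma}$ only for $k\leqslant m$ when $v\in H^m(\Gamma)$, so $|v^e|_{3,\omega(T)}$ is bounded by $\|v\|_{3,\Gamma}$, not by $\|v\|_{2,\Gamma}$, and for $v\in H^2(\Gamma)$ it need not be finite at all. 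The density argument does not rescue this: approximating $v$ by smooth functions gives a bound whose right-hand side blows up in the limit, so nothing passes to $v$. The smoothness of $\Gamma$ and the bound \eqref{eq:Hesse-bound} on $\mcH$ improve only the geometric factors in $\nabla\otimes\nabla v^e$, not the regularity of $v$ itself, so they do not supply the missing half derivative needed to take an $L^2$ trace of the Hessian on a codimension-one face. To be fair, the paper is equally cavalier here --- its cited estimate \eqref{eq:interpest1} requires $k\leqslant l-1/2\leqslant \min\{2,m\}-1/2$ and hence does not cover $k=2$ either --- but as written your argument for this term contains a step that fails, and you should either assume the extra regularity explicitly or reformulate the term so that only element-interior (not face-trace) norms of the Hessian appear.
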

The interpolation operator satisfies the following interpolation error estimate
with respect to the discrete energy norm
\begin{lemma}
\label{lem:interpolenergy}
For $v \in H^2(\Gamma)$, it holds that
\begin{align}
\label{eq:interpolenergy}
\tn v^e - I_h v^e \tn_{\ast,h} + \tn v^e - I_h v^e \tn_{\mcF_h} &\lesssim  h \| v \|_{2,\Gamma}
\end{align}
\end{lemma}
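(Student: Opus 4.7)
The strategy is to decompose the triple norm into its five constituent pieces,
\begin{equation*}
\tn e_I \tn_{\ast,h}^2 = \|\nablash e_I\|_{\mcK_h}^2 + \|h^{-1/2}[e_I]\|_{\mcE_h}^2 + \|h^{1/2}\mean{\nablash e_I}\|_{\mcE_h}^2 + \|h^{-1}[e_I]\|_{\mcF_h}^2 + \|n_F\cdot[\nabla e_I]\|_{\mcF_h}^2,
\end{equation*}
and to bound each piece separately. Lemma~\ref{lem:interpolenergy-lemma} handles the face-based quantities directly, while the cut-face and cut-edge trace inequalities from Section~\ref{sec:approximation-props} reduce the surface-element and edge-based quantities to face- or element-based quantities that the Scott--Zhang estimates~\eqref{eq:interpest0}--\eqref{eq:interpest1} together with the extension stability bound~\eqref{eq:stability-estimate-for-extension} can dispatch. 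Since $\tn e_I \tn_{\mcF_h}$ consists of two of these five contributions, the corresponding statement for $\tn e_I \tn_{\mcF_h}$ will follow automatically.

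For the tangential gradient term on $\mcK_h$, I would use $|\nablash e_I| \leqslant |\nabla e_I|$ and the cut-face trace inequality~\eqref{eq:trace-inequality-cut-faces} applied to $\nabla e_I$ to pass from $K = \Gamma_h \cap T$ to $T$. Combining with~\eqref{eq:interpest0} and the fact that $\nabla\otimes\nabla I_h v^e = 0$ on each $T$, summation over $T \in \mcT_h$ together with~\eqref{eq:stability-estimate-for-extension} with $\delta \sim h$ gives $\|\nablash e_I\|_{\mcK_h}^2 \lesssim h\,|v^e|_{2,N_h}^2 \lesssim h^2\|v\|_{2,\Gamma}^2$.

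For the two face-jump contributions, the triangle inequality applied to $[e_I]_F$ and $[\nabla e_I]_F$ reduces matters to one-sided face norms, which are controlled by Lemma~\ref{lem:interpolenergy-lemma} with the desired $h$-dependence. For the two edge contributions, I would first apply the cut-edge trace inequality~\eqref{eq:trace-inequality-cut-edges} to pass from $E \subset F$ up to the full face $F$, taking $w = e_I$ and $w = \nabla e_I$ respectively. This yields face-norm quantities of $e_I$, $\nabla e_I$ and $\nabla\otimes\nabla e_I$, all of which are summed over $\mcF_h$ and controlled by $h^2\|v\|_{2,\Gamma}^2$ via Lemma~\ref{lem:interpolenergy-lemma}. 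A careful balancing of powers of $h$ (each cut-trace step costing a factor $h^{-1/2}$) then shows that both edge contributions are bounded by $h^2\|v\|_{2,\Gamma}^2$.

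The main obstacle is purely bookkeeping of $h$-powers when composing the cut-edge trace inequality with the face-based estimate; the argument works precisely because Lemma~\ref{lem:interpolenergy-lemma} provides regularity up through $h\|\nabla\otimes\nabla e_I\|_{F}$, which is exactly the level needed when tracing $\nabla e_I$ down to $E \subset F$. Summing the five squared contributions and taking square roots yields the stated bound.
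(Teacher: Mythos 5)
Your proposal is correct and follows essentially the same route as the paper's proof: the tangential-gradient term is passed from $\mcK_h$ to $\mcT_h$ via the cut trace inequality and then handled with the Scott--Zhang estimate and the extension stability bound, the edge terms are lifted from $E$ to $F$ via~\eqref{eq:trace-inequality-cut-edges} and controlled by the face-based bounds of Lemma~\ref{lem:interpolenergy-lemma}, and the $\tn\cdot\tn_{\mcF_h}$ part follows directly from that same lemma. The $h$-power bookkeeping you describe matches the paper's computation exactly.
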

\begin{proof}
By definition
\begin{align}
 \tn v^e - I_h v^e \tn_{\ast,h}^2
& = (\nablash (v^e - I_h v^e), \nablash (v^e - I_h v^e))_{\mcK_h}
 \nonumber
\\
& \quad + h \| \nablash (v^e - I_h v^e) \cdot n_{E,h} \|_{\mcE_h}^2
+ h^{-1} \| v^e - I_h v^e \|_{\mcE_h}
\\
&= I + II + III
\end{align}
which we estimate next.
\\
\noindent{\bf Term I.} Successively employing the trace
inequality~\eqref{eq:trace-inequality}, the interpolation
estimate~\eqref{eq:interpest0} and the stability
bound~\eqref{eq:stability-estimate-for-extension} with $\delta_0 \sim
h$,
it follows that
\begin{align}
I 
& =
\sum_{T \in \mcT_h} (\nablash (v^e - I_h v^e), \nablash (v^e - I_h v^e))_{\Gamma_h \cap T}
\\
&\lesssim
\sum_{T \in \mcT_h} \| \nabla (v^e - I_h v^e) \|^2_{T \cap \Gamma_h}
\\
&\lesssim
\sum_{T \in \mcT_h} 
\bigl(
h^{-1} \| \nabla (v^e - I_h v^e) \|^2_{T}
+ h \| \nabla \otimes \nabla (v^e - I_h v^e) \|^2_{T}
\bigr)
\\
&\lesssim
\sum_{T \in \mcT_h} 
h \| v^e \|^2_{2, \omega(T)}
\\
&\lesssim
h \| v^e \|^2_{2, U_{\delta_0}(\Gamma)}
\\
&\lesssim
h^2 \| v \|^2_{2, \Gamma}
\end{align}
{\bf Term II.} To estimate the second contribution, we employ the
trace inequaltiy~\eqref{eq:trace-inequality-cut-edges} to pass from
$E$ to $F$ which in combination with~\eqref{eq:interpest1}
and~\eqref{eq:stability-estimate-for-extension} gives
\begin{align}
II 
& \lesssim 
\sum_{E \in \mcE_h} h || \nablash  (v^e - I_h v^e) \|_{E}^2
\\
& \lesssim 
\sum_{E \in \mcE_h} 
\bigl(
|| \nabla  (v^e - I_h v^e) \|_{F}^2
+ h^2 || \nabla \otimes \nabla (v^e - I_h v^e) \|_{F}^2
\bigr)
\\
& \lesssim 
h^2 \| v \|^2_{2, \Gamma}
\end{align}
{\bf Term III.} Applying the same reasoning as for the estimate of
Term II, we obtain
\begin{align}
  III 
 \lesssim 
\sum_{E \in \mcE_h } h^{-1}\| e_I \|_{E}
 \lesssim
\sum_{F \in \mcF_h } 
\big(
 h^{-2}\| e_I \|_{F}^2
+ \| \nabla e_I \|_{F}^2
\bigr)
\lesssim
h^2 \| v \|^2_{2, \Gamma}
\end{align}
To estimate the remaining term $\tn e_I \tn_{\mcF_h}$, we simply apply
the previous Lemma~\ref{lem:interpolenergy-lemma}
to arrive at the desired bound.
\end{proof}

We conclude this section by briefly reviewing the
the Oswald interpolator, cf.~\citet{BurmanFernandezHansbo2006},
which will be of particular use in establishing discrete Poincar\'e-type
estimates in Section~\ref{ssec:poincare-estimates}. 
By construction, 
$\mcO_h: \mcP_{dc}^k(\mcT) \to \mcP_{c}^k(\mcT_h)$,
with $\mcP_{dc}^k(\mcT_h)$ and $\mcP_{c}^k(\mcT_h)$ denoting the space of
discontinuous and continuous piecewise polynonials of order $k$.
Therefore the Oswald interpolator can be thought of as a certain
``smoothing operator''
and the following lemma shows  
how the fluctuation $Id - \mcO_h$ can be controlled
in terms of jumps on the faces.
\begin{lemma}
  \label{lem:cip-fluctuation-control}
  For $v \in \mcP_{dc}^k(\mcT_h)$, it holds that
  \begin{align}
    \| \mcO_h(v) \|_T 
    &\lesssim \| v \|_{\omega(T)}
    \label{eq:oswald-stability}
    \\
    \label{eq:cip-fluctuation-control}
    \| v - \mcO_h(v) \|_{T}^2
    &\lesssim
    \sum_{F \in \mcF_h(T)}
    h \| \jump{v} \|_F^2
  \end{align}
where $\mcF_h(T)$ denotes the set of all faces $F \in \mcF_h$ with $F \cap T \neq \emptyset$.
\end{lemma}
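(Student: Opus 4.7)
The plan is to prove both estimates by working from a concrete nodal definition of $\mcO_h$, then leveraging norm equivalences on the finite-dimensional polynomial space $\mcP^k(T)$ together with telescoping through patch neighbours.

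I would begin by fixing the construction: for each Lagrange node $x_i$ of the continuous finite element space on $\mcT_h$, set
\begin{equation*}
  \mcO_h(v)(x_i) = \frac{1}{|\omega_i|} \sum_{T' \in \omega_i} v|_{T'}(x_i),
\end{equation*}
where $\omega_i$ is the set of elements containing $x_i$. Because $\omega_i \subset \omega(T)$ for every $T \ni x_i$, the values defining $\mcO_h(v)|_T$ are all nodal values of $v$ on the patch $\omega(T)$. The standard norm equivalence
\begin{equation*}
\|p\|_T \sim h^{d/2} \max_{x_i \in T} |p(x_i)| \quad \text{for } p \in \mcP^k(T),
\end{equation*}
applied to $\mcO_h(v)|_T$ and then to each restriction $v|_{T'}$ on the patch, together with $|\omega_i| \gtrsim 1$ (uniform via shape regularity), yields the stability bound \eqref{eq:oswald-stability} after summing.

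For the fluctuation estimate, I would again use the same nodal norm equivalence, this time applied to $v - \mcO_h(v)$ on $T$:
\begin{equation*}
  \| v - \mcO_h(v) \|_T^2 \lesssim h^d \sum_{x_i \in T} |v|_T(x_i) - \mcO_h(v)(x_i)|^2
  = h^d \sum_{x_i \in T} \Bigl| \tfrac{1}{|\omega_i|} \sum_{T' \in \omega_i} \bigl(v|_T(x_i) - v|_{T'}(x_i)\bigr) \Bigr|^2.
\end{equation*}
The key step is now to bound each difference $v|_T(x_i) - v|_{T'}(x_i)$. Since $T$ and $T'$ belong to the vertex patch $\omega_i$, shape regularity provides a chain of face-sharing elements $T = T_0, T_1, \dots, T_m = T'$ within $\omega_i$ of uniformly bounded length, and along each step the difference of nodal values at $x_i$ equals the jump $\jump{v}(x_i)$ across the shared face $F_j \in \mcF_h(T)$ (since $x_i \in \overline{F_j}$). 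Thus
\begin{equation*}
  |v|_T(x_i) - v|_{T'}(x_i)|^2 \lesssim \sum_{F \in \mcF_h(T)} |\jump{v}(x_i)|^2,
\end{equation*}
and applying the pointwise-to-$L^2$ equivalence on each face $F$, namely $|\jump{v}(x_i)|^2 \lesssim h^{-(d-1)} \|\jump{v}\|_F^2$, converts pointwise jumps to $L^2$ face norms. Collecting the powers $h^d \cdot h^{-(d-1)} = h$ delivers \eqref{eq:cip-fluctuation-control}.

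The main obstacle is handling the combinatorics of the vertex patch cleanly: one must verify that any two elements in $\omega_i$ can be joined by a face-chain within $\omega_i$ of length bounded only by the shape regularity constant, so that only faces in $\mcF_h(T)$ appear on the right-hand side. This is standard for quasi-uniform, shape-regular simplicial meshes but deserves to be stated explicitly. The remaining steps are routine applications of norm equivalences on reference elements and scaling, with no genuinely subtle estimate beyond what is already in the toolkit from Section~\ref{sec:approximation-props}.
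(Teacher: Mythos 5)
Your proof is correct, and it is essentially the standard argument for the Oswald interpolant (nodal averaging, nodal norm equivalence on $\mcP^k(T)$, telescoping the nodal differences through a face-chain inside the vertex patch, and the inverse scaling $|\jump{v}(x_i)|^2 \lesssim h^{-(d-1)}\|\jump{v}\|_F^2$ on each face). The paper itself does not prove this lemma at all --- it defers to the cited reference of Burman, Fern\'andez and Hansbo --- so there is nothing to compare against beyond noting that your argument is the one that reference (and the literature on discontinuous Galerkin reconstruction operators) uses.

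One point deserves more care than your closing remark suggests, precisely because of the CutFEM setting: the mesh $\mcT_h$ here is the \emph{active} background mesh, i.e.\ only those elements of $\widetilde{\mcT}_h$ that intersect $\Gamma_h$. The vertex patch $\omega_i$ formed from active elements need not inherit the face-connectedness of the full patch in $\widetilde{\mcT}_h$: in principle two active elements sharing only the vertex $x_i$ could fail to be joined by a face-chain of \emph{active} elements, in which case the telescoping step, and hence the restriction of the right-hand side to faces in $\mcF_h(T)$, breaks down. So the claim you label as ``standard for quasi-uniform, shape-regular simplicial meshes'' is not automatic here; it either has to be verified from the geometry of $\Gamma_h$ (the surface passes through all elements of the patch in a connected fashion for $h$ small enough, which is the same kind of assumption underlying the fat-intersection covering of Section~\ref{ssec:fat-intersection-covering}), or the averaging in the definition of $\mcO_h$ has to be restricted to a face-connected subpatch. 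With that caveat addressed, the rest of your argument --- the stability bound via $|\omega_i|\gtrsim 1$ and the power count $h^d\cdot h^{-(d-1)}=h$ --- is sound.
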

A proof can be found in e.g.~\citet{BurmanFernandezHansbo2006},
together with the construction of the Oswald interpolator and a summary of some of its important
properties.

\section{Stability Estimates}
\label{sec:stability-estimates}
In this section, we demonstrate that the bilinear form
$a_h(\cdot,\cdot) + j_h(\cdot,\cdot)$ defined in
Section~\ref{ssec:dg-method} is both stable and coercive with
respect to its associated energy norm.
The proof of the coercivity result is based on a special discrete
Poincar\'e inequality which shows that for discrete functions, various $L^2$ norms
can be controlled by merely the tangential gradient and the semi-norm induced by stabilization form
$j_h(\cdot, \cdot)$.
As a major instrument in establishing such Poincar\'e inequalities,
we start by reviewing certain geometric covering relations of the
active background mesh and a certain stabilization mechanism provided by the
bilinear form $j_h(\cdot,\cdot)$.

\subsection{Fat Intersection Covering and Ghost Penalties}
\label{ssec:fat-intersection-covering}
Since the surface geometry is represented on fixed background mesh, the active background mesh $\mcT_h$
might contain elements which barely intersects the discretized surface $\Gamma_h$.
Such ``small cut elements'' typically prohibit the application of a whole set of
well-known estimates, such as interpolation estimates and inverse inequalities,
which typically rely on certain scaling properties.
As a partial replacement for the lost scaling properties we here recall from~\citet{BurmanHansboLarson2015} 
the concept of \emph{fat intersection coverings} of $\mcT_h$.

In \citet{BurmanHansboLarson2015} it was proved that
the active background mesh fulfills a fat intersection property which
roughly states that for every element in $\mcT_h$ there is
close-by element which has a significant intersection with $\Gamma_h$.
More precisely, let $x$ be a point on $\Gamma$ and let 
  $B_{\delta}(x) = \{y\in \RR^d: |x-y| < \delta\}$ 
  and 
  $D_{\delta} = B_{\delta}(x) \cap \Gamma$. 
  We define the sets of elements
  \begin{align}
    \mcK_{\delta,x} 
    = \{ K \in \mcK_h : \overline{K}^l \cap D_{\delta}(x) \neq
    \emptyset \},
    \qquad
    \mcT_{\delta,x} 
    = \{ T \in \mcT_h : T \cap \Gamma_h \in \mcK_{\delta,x} \}
    \label{eq:fat-intersection-covering}
  \end{align}
 With $\delta \sim h$ we use the notation $\mcK_{h,x}$ and
$\mcT_h,x$. For each $\mcT_h$, $h \in (0,h_0]$ there is a set of
points $\mcX_h$ on $\Gamma$ such that $\{\mcK_{h,x}, x \in \mcX_h \}$
and $\{\mcT_{h,x}, x \in \mcX_h \}$ are coverings $\mcT_h$ and
$\mcK_h$ with the following properties:
\begin{itemize}
  \item The number of set containing a given point $y$ is uniformly
    bounded
    \begin{align}
      \# \{ x \in \mcX_h : y \in \mcT_{h,x}  \} \lesssim 1 \quad
      \foralls y \in \RR^d
    \end{align}
  for all $h \in (0,h_0]$ with $h_0$ small enough.
\item
  The number of elements in the sets $\mcT_{h,x}$ is uniformly bounded
  \begin{align}
    \# \mcT_{h,x} \lesssim 1 
    \quad \foralls x \in \mcX_h
  \end{align}
  for all $h \in (0,h_0]$ with $h_0$ small enough, and each element in
  $\mcT_{h,x}$ shares at least one face with another element in
  $\mcT_{h,x}$.
\item $\foralls h \in (0,h_0]$ with $h_0$ small enough, and $\foralls x \in \mcX_h$, $\exists
  T_x \in \mcT_{h,x}$ that has a large (fat) intersection with
  $\Gamma_h$ in the sense that
  \begin{align}
  | T_x | \sim h | T_x \cap \Gamma_h | = h | K_x | 
  \quad \foralls x \in \mcX_h
  \end{align}
\end{itemize}

To make use of the fat intersection property, we will need the following lemma
which describes how to the control of discrete 
functions on potentially small cut elements can be transferred to
their close-by neighbors with large intersection by using the face-based
stabilization terms appearing in $j_h(\cdot,\cdot)$.
\begin{lemma}
  \label{lem:l2norm-control-via-jumps}
  Let $v$ be a discontinuous, piecewise linear function defined on a quasi-uniform mesh $\mcT_h$
  consisting of element $\{T\}$ and consider 
  a the macro-element $\mathcal{M} = T_1 \cup T_2$ formed by
  any two elements $T_1$ and $T_2$ sharing a face $F$. 
  Then
  \begin{align}
    \| v \|_{T_1}^2 &\lesssim  \|v\|_{T_2}^2
    + h \| \jump{u} \|_F^2 
    + h^3 \| n_F \cdot \jump{\nabla v} \|_F^2 
    \label{eq:l2norm-control-via-jumps-I}
    \\
    \| \nabla v \|_{T_1}^2 
    &\lesssim  \|\nabla v\|_{T_2}^2
    + h^{-1} \| \jump{v} \|_F^2 
    + h  \|n_F \cdot \jump{\nabla v} \|_F^2 
    \label{eq:l2norm-control-via-jumps-II}
  \end{align}
  with the hidden constant only depending on the quasi-uniformness parameter.
\end{lemma}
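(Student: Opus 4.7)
The key observation is that since $v$ is discontinuous piecewise linear, the restrictions $v_1 = v|_{T_1}$ and $v_2 = v|_{T_2}$ are both affine functions, so we can exploit the fact that an affine function is completely determined by its trace on a face together with its normal derivative on that face. The plan is to split $v_1 = v_2^{\mathrm{ext}} + w$, where $v_2^{\mathrm{ext}}$ denotes the affine extension of $v_2$ from $T_2$ to $T_1$ and $w = v_1 - v_2^{\mathrm{ext}}$ is itself affine on $T_1$. Crucially, on the shared face $F$ we have $w|_F = \jump{v}|_F$ and $n_F \cdot \nabla w = n_F \cdot \jump{\nabla v}$, so that $w$ is fully controlled by jump data.

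First I would handle the extension term $\|v_2^{\mathrm{ext}}\|_{T_1}$. Since $v_2^{\mathrm{ext}}$ is affine on $\RR^d$ and $T_1, T_2$ are shape-regular elements of comparable size sharing a face, a standard covering/scaling argument (or equivalence of norms on the finite-dimensional affine polynomial space) gives $\|v_2^{\mathrm{ext}}\|_{T_1} \lesssim \|v_2\|_{T_2}$ and likewise $\|\nabla v_2^{\mathrm{ext}}\|_{T_1} \lesssim \|\nabla v_2\|_{T_2}$, with constants depending only on the shape regularity of the macro element $\mathcal{M}$.

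Next I would estimate $w$ by a direct scaling computation. After a rigid change of coordinates so that $F$ lies in a hyperplane and $T_1$ extends to height $\sim h$, write $w(x,t) = a(x) + t b$ with $a = w|_F$ affine on $F$ and $b = n_F \cdot \nabla w$ constant. Integrating gives
\begin{align}
\|w\|_{T_1}^2 \lesssim h\|a\|_F^2 + h^3 b^2 |F| = h\|\jump{v}\|_F^2 + h^3 \|n_F\cdot\jump{\nabla v}\|_F^2,
\end{align}
which together with the extension estimate yields inequality~\eqref{eq:l2norm-control-via-jumps-I}. For the gradient version, $\nabla w$ is constant with $|\nabla w|^2 = |\nabla_F a|^2 + b^2$, so $\|\nabla w\|_{T_1}^2 \sim h(\|\nabla_F a\|_F^2 + b^2 |F|)$. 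I would then invoke the inverse estimate $\|\nabla_F a\|_F \lesssim h^{-1}\|a\|_F$ on $F$ for the affine function $a = \jump{v}|_F$ to arrive at
\begin{align}
\|\nabla w\|_{T_1}^2 \lesssim h^{-1}\|\jump{v}\|_F^2 + h\|n_F\cdot\jump{\nabla v}\|_F^2,
\end{align}
and combine with $\|\nabla v_2^{\mathrm{ext}}\|_{T_1}^2 \lesssim \|\nabla v_2\|_{T_2}^2$ to conclude~\eqref{eq:l2norm-control-via-jumps-II}.

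Nothing here is deep; the only mild subtlety is ensuring that all hidden constants depend solely on the shape regularity (equivalently quasi-uniformness), which follows because the scaling argument is performed on a reference macro patch obtained by the standard Piola/affine map. The main verification step is the bookkeeping in the scaling integrals which fixes the exponents $h$, $h^3$, $h^{-1}$, $h$, respectively; the inverse estimate on $F$ is the only place where one power of $h$ is gained to pass from the $L^2$ jump to its tangential gradient, and this accounts for the asymmetry between the two asserted inequalities.
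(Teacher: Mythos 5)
Your proof is correct, and it is self-contained where the paper's is not. For \eqref{eq:l2norm-control-via-jumps-I} the paper simply cites \citet{MassingLarsonLoggEtAl2013}, whereas you reconstruct the underlying argument: decompose $v_1 = v_2^{\mathrm{ext}} + w$ with $w$ affine and determined entirely by the jump data $w|_F = \pm\jump{v}|_F$ and $n_F\cdot\nabla w = \pm\, n_F\cdot\jump{\nabla v}$, then bound $v_2^{\mathrm{ext}}$ by polynomial norm equivalence on the macro-element and $w$ by the prism-type scaling integral that produces the exponents $h$ and $h^3$. For \eqref{eq:l2norm-control-via-jumps-II} the paper instead applies \eqref{eq:l2norm-control-via-jumps-I} componentwise to $\nabla v$ (the second-derivative term vanishing for piecewise linears) and then invokes the splitting $\|\jump{\nabla v}\|_F^2 \lesssim \|n_F\cdot\jump{\nabla v}\|_F^2 + h^{-2}\|\jump{v}\|_F^2$; you prove it directly from the same decomposition, and the inverse estimate $\|\nabla_F a\|_F \lesssim h^{-1}\|a\|_F$ you use for the tangential part of $\nabla w$ is exactly the ingredient behind the paper's splitting, so the source of the asymmetric powers of $h$ is identified identically in both arguments. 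The only points worth making explicit in a careful write-up are (i) that $\|v_2^{\mathrm{ext}}\|_{T_1}\lesssim\|v_2\|_{T_2}$ requires $T_1$ to sit inside a bounded dilation of $T_2$, which is where quasi-uniformity enters, and (ii) that in the scaling integral the projection of $T_1$ onto the hyperplane of $F$ may strictly contain $F$, so one needs $\|a\|_{L^2(\tilde F)}\lesssim\|a\|_{L^2(F)}$ for the affine extension of $a$ to a bounded dilation $\tilde F$ of $F$ — again a finite-dimensional norm equivalence. Neither point is a gap; both follow from the reference-patch argument you indicate.
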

\begin{proof}
A proof of the first estimate can be found in~\citet{MassingLarsonLoggEtAl2013}.
Setting $v = \nabla w$ in~\eqref{eq:l2norm-control-via-jumps-I},
the second inequality follows directly from the first one once the following face
base inverse estimate is established:
\begin{align}
  \|\jump{\nabla v}\|_F^2 
  \lesssim 
  \| n_F \cdot \jump{\nabla v}  \|_F^2
  + h^{-2} \| \jump{v} \|_F^2
  \label{eq:jump-full-gradient-split}
\end{align}

To prove~\eqref{eq:jump-full-gradient-split}, rewrite
$\nabla v = n_F \cdot \nabla v + P_F \nabla v$ 
with 
$P_F = Id - n_F \otimes n_F$ 
and observe that the 
face tangential part of the gradient satisfies
$\| \jump{P_F \nabla u } \|_F^2 \lesssim h^{-2} \| \jump{u}\|_F^2$.
\end{proof}

\subsection{Discrete Poincar\'e Estimates}
\label{ssec:poincare-estimates}
Next, we derive a discrete Poincar\'e estimate showing that for $v \in V_h$
the $L^2$ norm on the active background mesh can be controlled by the tangential surface gradient
and the semi-norm induced by $j_h(\cdot,\cdot)$.
While the final Poincar\'e estimate bears resemblance to estimates
presented in \citet{Brenner2003}, there are two major
differences. First, the active background mesh and thus the domain
$N_h$ varies with the mesh size, and second, the full $\RR^d$ gradient
of $v \in V_h$ is not available in our surface method.
We start with the following lemma.
\begin{lemma}
  \label{lem:poincare-I} For $v \in V_h$, the following estimate holds 
  \begin{equation}
    h^{-1}\| v - \lambda_{\Gamma_h}(v) \|^2_{N_h}
    \lesssim
    \| \nablash v \|_{\Gamma_h}^2 
    + h \| \nabla v \|_{N_h}^2 
    + h^{-2} \| \jump{v} \|^2_{\mcF_h}
  \label{eq:poincare-I}
\end{equation}
  for $0<h \leq h_0$ with $h_0$ small enough.
\end{lemma}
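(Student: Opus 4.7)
The plan is to reduce the discontinuous function $v$ to a continuous piecewise linear function via the Oswald interpolant from Lemma~\ref{lem:cip-fluctuation-control} and then prove a classical-style Poincar\'e estimate on the thin layer $N_h$ for continuous functions using the fat intersection covering from Section~\ref{ssec:fat-intersection-covering}. The jump terms will arise solely from the Oswald decomposition residual.

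\textbf{Step 1 (Oswald decomposition).} Set $w = \mcO_h v$. Lemma~\ref{lem:cip-fluctuation-control} together with the inverse estimate~\eqref{eq:inverse-estimate-grad} yields
\begin{align*}
\| v - w \|_{N_h}^2 \lesssim h \|\jump{v}\|_{\mcF_h}^2, \qquad
\|\nabla(v-w)\|_{N_h}^2 \lesssim h^{-1} \|\jump{v}\|_{\mcF_h}^2.
\end{align*}
Combined with the cut trace inequality~\eqref{eq:trace-inequality-cut-faces} applied on each $K = \Gamma_h \cap T$, this also gives $\|v-w\|_{\Gamma_h}^2 \lesssim \|\jump{v}\|_{\mcF_h}^2$ and $\|\nablash (v-w)\|_{\Gamma_h}^2 \lesssim h^{-2}\|\jump{v}\|_{\mcF_h}^2$.

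\textbf{Step 2 (average shift).} Let $\bar w = |\Gamma_h|^{-1} \int_{\Gamma_h} w$. By Cauchy--Schwarz and Step 1,
$|N_h|(\bar w - \lambda_{\Gamma_h}(v))^2 \lesssim h \|v - w\|_{\Gamma_h}^2 \lesssim h \|\jump v\|_{\mcF_h}^2$, since $|N_h| \sim h|\Gamma_h|$.

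\textbf{Step 3 (continuous Poincar\'e on the tube).} The heart of the argument is to establish, for continuous piecewise linear $w$,
\begin{equation*}
\|w - \bar w\|_{N_h}^2 \lesssim h^2 \|\nabla w\|_{N_h}^2 + h \|\nablash w\|_{\Gamma_h}^2.
\end{equation*}
For each $x \in \mcX_h$ the fat intersection property furnishes a fat element $T_x$ with $|T_x| \sim h |K_x|$. On $T_x$ one projects perpendicularly to the plane of $K_x$: since this projection has image of area $\sim |K_x|$, a one-dimensional Fubini/fundamental theorem argument along the normal direction gives
$\|w - \bar w\|_{T_x}^2 \lesssim h^2 \|\nabla w\|_{T_x}^2 + h \|w - \bar w\|_{K_x}^2$.
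For any other $T \in \mcT_{h,x}$, one migrates to $T_x$ along an $O(1)$-length face-sharing chain using Lemma~\ref{lem:l2norm-control-via-jumps}; since $w$ is continuous the $h\|\jump w\|_F^2$ term vanishes, and $h^3\|n_F\cdot\jump{\nabla w}\|_F^2$ is absorbed into $h^2\|\nabla w\|^2_{\mathcal{M}_x}$ via the standard face inverse inequality~\eqref{eq:inverse-estimates-boundary}. Summing over $\mcX_h$ using the bounded overlap of $\{\mcT_{h,x}\}$, and invoking the surface Poincar\'e inequality $\|w - \bar w\|_{\Gamma_h}^2 \lesssim \|\nablash w\|_{\Gamma_h}^2$ (which follows by lifting $w$ to $\Gamma$ and applying the standard Poincar\'e inequality on the closed manifold $\Gamma$ together with the norm equivalences of Lemma~\ref{lem:norm-equivalences}), yields the claim.

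\textbf{Step 4 (assembly).} Combining the three steps and transferring gradients of $w$ back to $v$ via $\|\nabla w\|_{N_h}^2 \lesssim \|\nabla v\|_{N_h}^2 + h^{-1}\|\jump v\|_{\mcF_h}^2$ and $\|\nablash w\|_{\Gamma_h}^2 \lesssim \|\nablash v\|_{\Gamma_h}^2 + h^{-2}\|\jump v\|_{\mcF_h}^2$ delivers the asserted bound (the lower-order $\|\jump v\|^2$ contribution from Steps 1--2 is trivially absorbed into $h^{-2}\|\jump v\|^2$ for $h \le h_0 \le 1$).

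\textbf{Main obstacle.} The technical core lies in Step~3: obtaining a one-sided estimate of the volumetric $L^2$ norm on $T_x$ by the surface $L^2$ norm on $K_x$ up to an $h$-weighted gradient, which in turn relies critically on the fat intersection property---for an arbitrary small cut $K$ no such direct bound holds, which is precisely why one must route through the covering $\{T_x\}_{x\in \mcX_h}$ and Lemma~\ref{lem:l2norm-control-via-jumps} to propagate control onto all of $\mcT_h$. Care is also needed to verify that the polygonal surface Poincar\'e inequality on $\Gamma_h$ has a constant uniform in $h$, which is most cleanly handled by lifting to the smooth closed surface $\Gamma$.
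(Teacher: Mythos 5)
Your proof is correct and draws on the same essential ingredients as the paper's argument --- the fat intersection covering together with Lemma~\ref{lem:l2norm-control-via-jumps} to propagate control onto the fat elements $T_x$, the relation $|T_x|\sim h|K_x|$ to trade the volume norm on $T_x$ for the $h$-weighted surface norm on $K_x$, the Oswald interpolant, and the Poincar\'e inequality on the closed surface $\Gamma$ obtained by lifting --- but it organizes them differently. You front-load the smoothing, decomposing $v=\mcO_h v+(v-\mcO_h v)$ at the outset and then proving a clean tube Poincar\'e inequality for \emph{continuous} piecewise linears, paying for this with the gradient-transfer estimates of Step~4; the paper instead carries the discontinuous $v$ through the covering argument (so the jump terms produced by Lemma~\ref{lem:l2norm-control-via-jumps} survive and feed directly into $h^{-2}\|\jump{v}\|^2_{\mcF_h}$) and only invokes $\widetilde v=\mcO_h(v)$ at the last step, where a liftable $H^1$ function is needed for the surface Poincar\'e inequality. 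Your handling of the average via the best-approximation property of the mean is also slightly slicker than the paper's explicit estimate of the mean-value error $\lambda_\Gamma(\widetilde v^{\,l})-\lambda_{\Gamma_h}(v)$. One soft spot: the Fubini/fundamental-theorem justification of $\|w-\bar w\|_{T_x}^2\lesssim h^2\|\nabla w\|_{T_x}^2+h\|w-\bar w\|_{K_x}^2$ is geometrically delicate, since the orthogonal projection of $T_x$ onto the plane of $K_x$ need not land inside $K_x$; because $w$ is piecewise linear this estimate is more cleanly obtained, as in the paper, by inserting the element mean $\overline w|_{T_x}=|T_x|^{-1}\int_{T_x}w$, using $\|w-\overline w\|_{T_x}\lesssim h\|\nabla w\|_{T_x}$ and $\|\overline w\|_{T_x}^2\sim h\|\overline w\|_{K_x}^2$, and then returning from $\overline w$ to $w$ on $K_x$ via the cut inverse estimate~\eqref{eq:inverse-estimate-cut-v-on-K}. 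With that substitution your argument is complete.
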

\begin{proof}
  Without loss of generality we can assume that $\lambda_{\Gamma_h}(v) = 0$.
  Apply~\eqref{eq:l2norm-control-via-jumps-I} and
  \eqref{eq:inverse-estimates-boundary} to obtain
  \begin{align}
    \| v \|_{N_h}^2 
    &\lesssim \sum_{x \in \mcX_h} \| v \|_{\mcT_{h,x}}^2
    \\
    &\lesssim
    \sum_{x \in \mcX_h} \| v \|_{T_x}^2 
    + h \| \jump{v} \|_{\mcF_h}^2
    + h^3 \| n_F \cdot \jump{\nabla v} \|_{\mcF_h}^2
    \\
    &\lesssim
    \sum_{x \in \mcX_h} \| v \|_{T_x}^2 
    + h \| \jump{v} \|_{\mcF_h}^2
    + h^2 \| \nabla v \|_{N_h}^2
    \label{eq:poincare-proof-I}
\end{align}
Thus it is sufficient to estimate the first term
in~\eqref{eq:poincare-proof-I}.
For $v \in V_h$, we define a piecewise constant version satisfying
$\overline{v}|_T = \tfrac{1}{|T|} \int_T v \dx$. Clearly 
$\|v - \overline{v}\|_T \lesssim h \| \nabla v \|_T$.
Adding and subtracting $\overline{v}$ gives
\begin{align}
  \sum_{x \in \mcX_h} \| v \|_{T_x}^2
  &\lesssim
  \sum_{x \in \mcX_h}  \| v - \overline{v} \|_{T_x}^2
  +
  \sum_{x \in \mcX_h}  \| \overline{v} \|_{T_x}^2
  \\
  &\lesssim
  h^2 \| \nabla v \|_{N_h}^2
  + \sum_{x \in \mcX_h} h \| \overline{v} \|_{K_x}^2
  \\
  &\lesssim
  h^2 \| \nabla v \|_{N_h}^2
  + h \| v \|_{\Gamma_h}^2
  + h \| v - \overline{v} \|_{\Gamma_h}^2
  \\
  &\lesssim
  h^2 \| \nabla v \|_{N_h}^2
  + h \| v \|_{\Gamma_h}^2
  \label{eq:transition-to-surface-I}
\end{align}
It remains to estimate the last term in~\eqref{eq:transition-to-surface-I}.
To apply a Poincar\'e estimate on the surface~$\Gamma$,
we introduce a smoothed version $\widetilde{v} = \mcO_h(v)
\in H^1(\Gamma)$ of $v$. Then
\begin{align}
  h \| v \|_{\Gamma_h}^2
  &\lesssim
  h \| \widetilde{v} \|_{\Gamma_h}^2
  +
  h \| v - \widetilde{v} \|_{\Gamma_h}^2
  \lesssim
  h \| \widetilde{v} \|_{\Gamma_h}^2
  +
  \| v - \widetilde{v} \|_{N_h}^2
  \lesssim
  h \| \widetilde{v}^l \|_{\Gamma}^2
  +
  h \| \jump{v} \|_{\mcF_h}^2
\end{align}
where we used the inverse
estimate~\eqref{eq:inverse-estimate-cut-v-on-K} to pass from
$\Gamma_h$ to $N_h$ and the fluctation
control~\eqref{eq:cip-fluctuation-control} for the Oswald
interpolator.
Now applying a standard Poincar\'e estimate to the first term yields
  \begin{align}
    h \| \widetilde{v}^l \|_{\Gamma}^2
    \lesssim h \lambda_{\Gamma}(\widetilde{v}^l)^2 + h \| \nablas
    \widetilde{v}^l
    \|_{\Gamma}^2 = I + II.
  \end{align}
  which we estimate next.
  \\
  {\bf Term I.}
  Since $\lambda_{\Gamma_h}(v) = 0$, the first term can be considered
  as the error of the mean value which can be bounded by
  \begin{align}
    I 
    &= h 
    \left(
      \lambda_{\Gamma}(\widetilde{v}^l)
      - \lambda_{\Gamma_h}({v})
    \right)^2
    \lesssim
    \dfrac{h}{|\Gamma_h|}
    \left(
   \int_{\Gamma_h} | \widetilde{v} - v|^2 \ds
   + \int_{\Gamma_h} ( 1 - c)^2 \widetilde{v}^2 \ds
    \right)
\label{eq:error-of-average-est}
  \end{align}
  with $c = |\Gamma_h||\Gamma|^{-1} |B|$.
  We note  that 
  $\| 1 - c \|^2_{L^\infty(\Gamma)} \lesssim h^4$
  thanks to~\eqref{eq:detBbound}. Then
  \begin{align}
    I &\lesssim 
    h \| \widetilde{v} - v \|_{\Gamma_h}^2
    + h^5 \| \widetilde{v} \|_{\Gamma_h}^2
    \lesssim
    \| \widetilde{v} - v \|_{N_h}^2 
    + h^4 \| \widetilde{v} \|_{N_h}^2 
    \lesssim h
    \|\jump{v}\|_{\mcF_h}^2
    + h^4 \| v \|_{N_h}^2 
  \end{align}
  where the estimates
  ~\eqref{eq:cip-fluctuation-control} and
  \eqref{eq:oswald-stability} for the Oswald interpolator were used in the last
  step.
  \\
  {\bf Term II.} To estimate the second term, apply the inverse
  estimates~\eqref{eq:inverse-estimate-cut-v-on-K} and
  ~\eqref{eq:inverse-estimate-grad} to obtain 
  \begin{align}
    II 
     \lesssim 
    h \| \nablash \widetilde{v} \|_{\Gamma_h}^2
    &\lesssim
    h \| \nablash v \|_{\Gamma_h}^2
    + h \| \nablash (v - \widetilde{v}) \|_{\Gamma_h}^2
    \\
    &\lesssim
    h \| \nablash v \|_{\Gamma_h}^2
    +
    \| \nabla(\widetilde{v}  - v )\|_{N_h}^2
    \\
    &\lesssim
    h \| \nablash v \|_{\Gamma_h}^2
    +
    h^{-2}\| \widetilde{v}  - v \|_{N_h}^2
    \\
    &\lesssim
    h \| \nablash v \|_{\Gamma_h}^2
    +
    h^{-1}\| \jump{v}\|_{\mcF_h}^2
  \end{align}
  Collecting all terms and dividing by $h$, we see that
  \begin{align}
    h^{-1}\| v \|_{N_h}^2 
    \lesssim 
    \| \nablash v \|_{\Gamma_h}^2 
    + h \| \nabla v \|_{N_h}^2 
    + h^{-2} \|\jump{v}\|_{\mcF_h}^2
    + h^3 \| v \|_{N_h}^2
  \end{align}
  which gives the desired estimates since the last term 
  can be absorbed into the left-hand side for $h$ small enough.
\end{proof}

The next lemma describes how
the full gradient $\nabla v$ on $N_h$ can be eliminated
from~\eqref{eq:poincare-I}.
The main idea is to apply the previous lemma to $\nabla v$
to show that the $h^{1/2}$-scaled, full gradient 
can be controlled in terms of the tangential gradient 
and the face stabilization~\eqref{eq:jh-def}.
\begin{lemma}
  For $v \in V_h$, the following estimate holds
  \begin{align}
    h \| \nabla v \|_{N_h}^2
    \lesssim
    h^2 \| \nablash v \|_{\Gamma_h}^2
    + 
    \tn v \tn_{\mcF_h}^2
    \label{eq:control-entire-grad-by-tang-grad}
  \end{align}
  \label{lem:control-entire-grad-by-tang-grad}
\end{lemma}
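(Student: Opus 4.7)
The plan is to apply Lemma~\ref{lem:poincare-I} componentwise to the partial derivatives $w_i := \partial_i v$, $i=1,\dots,d$, following the hint preceding the lemma. Since $v|_T$ is affine for each $T \in \mcT_h$, every $w_i$ is piecewise constant and thus belongs to $V_h$, with $\nabla w_i \equiv 0$ on $N_h$ and $\nablash w_i \equiv 0$ on $\Gamma_h$. Consequently Lemma~\ref{lem:poincare-I} collapses to
\[
\|w_i - \lambda_{\Gamma_h}(w_i)\|_{N_h}^2 \lesssim h^{-1}\|\jump{w_i}\|_{\mcF_h}^2.
\]
Summing over $i$, using $\sum_i \|\jump{\partial_i v}\|_F^2 = \|\jump{\nabla v}\|_F^2$, and invoking the face-based splitting~\eqref{eq:jump-full-gradient-split} yields
\[
h\|\nabla v - \overline{\nabla v}\|_{N_h}^2 \lesssim \|\jump{\nabla v}\|_{\mcF_h}^2 \lesssim \tn v\tn_{\mcF_h}^2,
\]
where $\overline{\nabla v}\in\RR^d$ is the constant vector with $i$-th component $\lambda_{\Gamma_h}(\partial_i v)$.

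Next, I would combine the triangle inequality with the standard narrow-band volume bound $|N_h|\sim h$ to reduce the claim to estimating the constant part of $\nabla v$:
\[
h\|\nabla v\|_{N_h}^2 \lesssim \tn v\tn_{\mcF_h}^2 + h^2|\overline{\nabla v}|^2,
\]
so it suffices to prove $h^2|\overline{\nabla v}|^2 \lesssim h^2\|\nablash v\|_{\Gamma_h}^2 + \tn v\tn_{\mcF_h}^2$.

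The central ingredient is the uniform geometric coercivity
\[
|c|^2 \lesssim \int_{\Gamma_h}|\Psh c|^2\,\dsh \quad \foralls c \in \RR^d,
\]
valid for $h$ small enough. This holds because the matrix $M_{\Gamma_h} := \int_{\Gamma_h}\Psh\,\dsh = |\Gamma_h|\,I - \int_{\Gamma_h}n_h\otimes n_h\,\dsh$ is uniformly positive definite: on the smooth compact closed hypersurface $\Gamma$ no non-zero vector can be parallel to the normal field almost everywhere, so the analogous matrix $M_\Gamma$ has a strictly positive smallest eigenvalue, and the perturbation bound $\|n^e - n_h\|_{L^\infty}\lesssim h$ from~\eqref{eq:geometric-assumptions-II} together with $|1-|B|_{d-1}|\lesssim h^2$ from Lemma~\ref{lem:detBbounds} give $\|M_{\Gamma_h}-M_\Gamma\|\lesssim h$, so the positivity transfers to $\Gamma_h$. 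Applying this coercivity with $c = \overline{\nabla v}$, writing $\Psh\overline{\nabla v} = \nablash v - \Psh(\nabla v - \overline{\nabla v})$ pointwise on $\mcK_h$, and bounding the second summand via the cut inverse estimate~\eqref{eq:inverse-estimate-cut-v-on-K} together with the bound from the first paragraph, gives
\[
|\overline{\nabla v}|^2 \lesssim \|\nablash v\|_{\Gamma_h}^2 + h^{-1}\|\nabla v-\overline{\nabla v}\|_{N_h}^2 \lesssim \|\nablash v\|_{\Gamma_h}^2 + h^{-2}\tn v\tn_{\mcF_h}^2.
\]
Multiplying by $h^2$ and combining with the reduction above completes the argument. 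The chief obstacle is the geometric coercivity estimate, which crucially exploits the closedness and compactness of $\Gamma$ and requires a careful perturbation argument to guarantee that the coercivity constant is independent of $h$.
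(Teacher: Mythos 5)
Your proof is correct and follows essentially the same route as the paper's: split $\nabla v$ into its $\Gamma_h$-average plus a fluctuation, control the fluctuation by applying Lemma~\ref{lem:poincare-I} to the piecewise constant components of $\nabla v$ together with~\eqref{eq:jump-full-gradient-split}, and control the constant part through its tangential projection using the fact that no nonzero constant vector is everywhere normal to a closed surface. The only cosmetic difference is that you establish the coercivity $|c|^2 \lesssim \| \Psh c \|_{\Gamma_h}^2$ directly on $\Gamma_h$ via a matrix perturbation argument, whereas the paper proves $\|a\|_{\Gamma} \lesssim \|\Ps a\|_{\Gamma}$ on the exact surface by finite dimensionality and then passes to $\Psh$ and $\Gamma_h$ by absorbing the resulting $O(h)\|a\|_{\Gamma_h}$ term.
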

An immediate consequence is 
\begin{corollary}
  \label{cor:discrete-poincare}
  Let $h \in (0,h_0]$ with $h_0$ small
  enough. Then the following estimate holds:
  \begin{equation}
    h^{-1}\| v - \lambda_{\Gamma_h}(v) \|^2_{N_h}
    \lesssim
    \| \nablash v \|^2_{\Gamma_h}
    + \tn v \tn^2_{\mcF_h}
    \quad \forall v \in V_h.
  \end{equation}
\end{corollary}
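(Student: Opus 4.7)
The plan is straightforward: this is a corollary, and the two immediately preceding results have been set up so that the conclusion follows by direct substitution. Specifically, Lemma~\ref{lem:poincare-I} already delivers the desired bound with one extra term, namely $h \| \nabla v \|_{N_h}^2$, and Lemma~\ref{lem:control-entire-grad-by-tang-grad} provides exactly the estimate needed to dispose of that term by controlling the $h^{1/2}$-scaled full $\RR^d$ gradient via the tangential gradient $\nablash v$ and the face stabilization norm $\tn v \tn_{\mcF_h}$.

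First, I would apply Lemma~\ref{lem:poincare-I} to the given $v \in V_h$, yielding
\begin{equation*}
  h^{-1}\| v - \lambda_{\Gamma_h}(v) \|^2_{N_h}
  \lesssim
  \| \nablash v \|_{\Gamma_h}^2
  + h \| \nabla v \|_{N_h}^2
  + h^{-2} \| \jump{v} \|^2_{\mcF_h}.
\end{equation*}
The last term is already bounded by $\tn v \tn_{\mcF_h}^2$ by the definition~\eqref{eq:face-norm}, so only the middle term needs further treatment.

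Next, I would invoke Lemma~\ref{lem:control-entire-grad-by-tang-grad} to rewrite
\begin{equation*}
  h \| \nabla v \|_{N_h}^2
  \lesssim
  h^2 \| \nablash v \|_{\Gamma_h}^2
  + \tn v \tn_{\mcF_h}^2,
\end{equation*}
and since $h \leqslant h_0$ is bounded, the factor $h^2$ can be absorbed, giving $h \| \nabla v \|_{N_h}^2 \lesssim \| \nablash v \|_{\Gamma_h}^2 + \tn v \tn_{\mcF_h}^2$. Substituting this back into the previous display and combining like terms yields the claimed estimate.

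There is no genuine obstacle here; the work has already been done in the two preceding lemmas. The only thing worth noting is that the smallness assumption on $h_0$ is inherited directly from Lemma~\ref{lem:poincare-I} (which in turn relied on absorbing an $h^3 \| v \|_{N_h}^2$ term into the left-hand side), so no new restriction on $h_0$ is introduced at the corollary stage.
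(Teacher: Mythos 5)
Your proposal is correct and follows exactly the route the paper intends: the corollary is stated as "an immediate consequence" of Lemma~\ref{lem:poincare-I} and Lemma~\ref{lem:control-entire-grad-by-tang-grad}, and your substitution of the second lemma's bound for the $h\|\nabla v\|_{N_h}^2$ term, together with absorbing $h^{-2}\|\jump{v}\|_{\mcF_h}^2$ into $\tn v \tn_{\mcF_h}^2$, is precisely that argument. Your remark that no new smallness restriction on $h_0$ is introduced is also accurate.
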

As a result, $\tn \cdot \tn_h$ defines a norm on
$V_{h,0}$ which satisfies the discrete Poincar\'e estimates
\begin{align}
  h^{-1}\|v\|^2_{N_h}
  \lesssim \tn v \tn^2_h
  \label{eq:discrete-poincare-Nh}
  \\
  \|v\|^2_{\Gamma_h}
  \lesssim \tn v \tn^2_h
  \label{eq:discrete-poincare-Gammah}
\end{align}
where the second version is obtained from the first using the inverse
estimate~\eqref{eq:inverse-estimate-cut-on-E}.
\begin{proof}(Lemma~\ref{lem:control-entire-grad-by-tang-grad})
  Choosing $a = \lambda_{\Gamma_h}(\nabla v)$,
  the previous lemma gives
  \begin{align}
   h  \| \nabla v \|_{\mcT_h}^2
   &\lesssim
   h \| a \|_{\mcT_h}^2
   + 
   h  \| \nabla v - a \|_{\mcT_h}^2
   \\
   &\lesssim 
   h \| a \|_{\mcT_h}^2
   +
   \| [ \nabla u ] \|_{\mcF_h}^2
   \label{eq:constant-split-of}
  \end{align}
It remains to estimate the first term 
$ h \| a \|_{\mcT_h}^2$. Clearly, 
\begin{align}
h \| a \|_{\mcT_h}^2 \lesssim  h^2 \| a \|_{\Gamma_h}^2
  \label{eq:kick-back-step-1}
\end{align}
and since 
$\| \Ps a \|_{\Gamma_h} \lesssim \| a \|_{\Gamma}$ 
by a
finite dimensionality argument,
we obtain
\begin{align}
  \| a \|_{\Gamma_h} 
  \lesssim
  \| a \|_{\Gamma} 
  \lesssim
  \| \Ps a \|_{\Gamma} 
  \lesssim
  \| \Ps a \|_{\Gamma_h} 
  & \lesssim
  \| \Psh a \|_{\Gamma_h} 
  + \| (\Psh - \Ps) a \|_{\Gamma_h} 
  \\
  &\lesssim
  \| \Psh a \|_{\Gamma_h} 
  + h \| a \|_{\Gamma_h} 
\end{align}
We can kick-back $h \| a \|_{\Gamma_h}$
and absorb it on the left-hand side whenever $h \leqslant h_0$
for some $h_0$ small enough. Consequently, 
\begin{align}
h^2 \| a \|_{\Gamma_h} \lesssim 
h^2 \| \Psh a \|_{\Gamma_h}^2
  \label{eq:kick-back-step-2}
\end{align}
Now using the boundedness of $\Psh$ and the inverse
inequality~\eqref{eq:inverse-estimate-cut-v-on-K}, it follows that
\begin{align}
h^2 \| \Psh a \|_{\Gamma_h}^2
&\lesssim
h^2 \| \Psh \nabla v \|_{\Gamma_h}^2
+
h^2 \| \Psh (a - \nabla v) \|_{\Gamma_h}^2
\nonumber
\\
& \lesssim
h^2 \| \nablash  v \|_{\Gamma_h}^2
+
h \| a - \nabla v \|_{\mcT_h}^2
\nonumber
\\
& \lesssim
h^2 \| \nablash  v \|_{\Gamma_h}^2
+
   \| [ \nabla v ] \|_{\mcF_h}^2
  \label{eq:kick-back-step-3}
\end{align}
Collecting~\eqref{eq:kick-back-step-1}, \eqref{eq:kick-back-step-2}
and~\eqref{eq:kick-back-step-3} and
using~\eqref{eq:jump-full-gradient-split}, we conclude that
\begin{align}
  h \| a \|_{\mcT_h}^2 
  &\lesssim  
  h^2 \| \nablash  v \|_{\Gamma_h}^2
  +
  \| [ \nabla u ] \|_{\mcF_h}^2
  \\
  & \lesssim
  h^2 \| \nablash  v \|_{\Gamma_h}^2
  +
  \| [ n_F \cdot \nabla u ] \|_{\mcF_h}^2
  +
  h^{-2} \| [ u ] \|_{\mcF_h}^2
\end{align}
which in combination with ~\eqref{eq:constant-split-of} yields the desired inequality.
\end{proof}

\subsection{Coercivity and Continuity}
\label{ssection:coercivity}
\begin{lemma} The following estimate holds
\begin{equation}
  h \| n_E \cdot \nablash v \|^2_{\mcE_h}
\lesssim
\| \nablash v \|^2_{\Gamma_h}
+\tn v \tn_{\mcF_h}^2
\label{eq:inverse-estimate-for-conormal-flux}
\end{equation}
for $0< h \leq h_0$ with $h_0$ small enough.
\label{lem:inverse-estimate-for-conormal-flux}
\end{lemma}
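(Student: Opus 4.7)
The plan is to localize to a single edge $E \in \mcE_h$, relate the co-normal flux on $E$ to the tangential gradient on a nearby \emph{fat} surface element supplied by the fat intersection covering of Section~\ref{ssec:fat-intersection-covering}, and pay for the transfer from the possibly sliver-sized cut element to the fat one using the face stabilization terms in $\tn\cdot\tn_{\mcF_h}$. Fix $E$ with adjacent surface elements $K^{\pm} = \Gamma_h \cap T^{\pm}$ and work with one side, say $K^+$; since $v|_{T^+}$ is affine, both $\nabla v|_{T^+}$ and $\nablash v|_{K^+} = P_{\Gamma_h}^{K^+} \nabla v|_{T^+}$ are constant vectors, and because $n_E^+$ is tangent to $K^+$ we have the identity $n_E^+ \cdot \nablash v|_{K^+} = n_E^+ \cdot \nabla v|_{T^+}$. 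By the fat intersection property, there exists $x \in \mcX_h$ with $T^+ \in \mcT_{h,x}$ together with a fat element $T_x \in \mcT_{h,x}$ satisfying $|K_x| := |T_x \cap \Gamma_h| \sim h^{d-1}$, and with $T^+$ and $T_x$ joined by a chain of $O(1)$ face-sharing elements in $\mcT_{h,x}$.

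The core of the argument is to split
\begin{equation*}
  n_E^+ \cdot \nabla v|_{T^+} = n_E^+ \cdot \nabla v|_{T_x} + n_E^+ \cdot (\nabla v|_{T^+} - \nabla v|_{T_x}),
\end{equation*}
square, integrate over $E$, and multiply by $h$. For the difference term, $\nabla v|_{T^+} - \nabla v|_{T_x}$ telescopes into a sum of constant jumps $[\nabla v]_{F_i}$ across the connecting chain, whose pointwise magnitudes scale as $|[\nabla v]_{F_i}|^2 \sim h^{1-d} \|[\nabla v]\|_{F_i}^2$; combining this with $h|E| \lesssim h^{d-1}$ and the gradient-jump splitting~\eqref{eq:jump-full-gradient-split} bounds the corresponding contribution by the face part of $\tn v \tn_{\mcF_h}^2$ restricted to the faces of $\mcT_{h,x}$. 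For the leading term, decompose $\nabla v|_{T_x} = \nablash v|_{K_x} + (n_{\Gamma_h}^{K_x} \cdot \nabla v|_{T_x}) n_{\Gamma_h}^{K_x}$; since $n_E^+ \perp n_{\Gamma_h}^{K^+}$ and the geometric assumption~\eqref{eq:geometric-assumptions-II} gives $|n_{\Gamma_h}^{K_x} - n_{\Gamma_h}^{K^+}| \lesssim h$, we obtain $|n_E^+ \cdot n_{\Gamma_h}^{K_x}| \lesssim h$ and hence $(n_E^+ \cdot \nabla v|_{T_x})^2 \lesssim |\nablash v|_{K_x}|^2 + h^2 |\nabla v|_{T_x}|^2$. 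With $|E| \lesssim h^{d-2}$ and the fatness $|K_x| \sim h^{d-1}$, this yields $h|E|\,|\nablash v|_{K_x}|^2 \lesssim \|\nablash v\|_{K_x}^2$ together with a harmless $h\|\nabla v\|_{T_x}^2$.

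Summing over all $E \in \mcE_h$ and using the bounded-overlap of the covering $\{\mcT_{h,x},\mcK_{h,x}\}_{x\in \mcX_h}$, one arrives at
\begin{equation*}
  h \|n_E \cdot \nablash v\|_{\mcE_h}^2 \lesssim \|\nablash v\|_{\Gamma_h}^2 + h \|\nabla v\|_{N_h}^2 + \tn v \tn_{\mcF_h}^2,
\end{equation*}
and the residual full-gradient term is absorbed via Lemma~\ref{lem:control-entire-grad-by-tang-grad}, yielding the claimed bound for $h \leq h_0$ sufficiently small. The principal technical hurdle is the tangential/normal splitting on the fat element $K_x$: it is precisely the combination of the fatness $|K_x| \sim h^{d-1}$ with the $O(h)$ closeness of discrete normals on neighboring surface patches that lets the ``off-tangential'' contribution be paid for by a single power of $h$ on the $N_h$-gradient, which Lemma~\ref{lem:control-entire-grad-by-tang-grad} can then absorb into $\|\nablash v\|_{\Gamma_h}^2 + \tn v \tn_{\mcF_h}^2$.
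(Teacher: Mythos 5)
Your proof is correct and follows essentially the same route as the paper's: both arguments transfer control from the edge to a fat surface element supplied by the fat intersection covering, pay for the transfer with the face jump terms (via the splitting~\eqref{eq:jump-full-gradient-split}), exploit the $O(h)$ closeness of the discrete normals, and absorb the residual $h\|\nabla v\|_{N_h}^2$ with Lemma~\ref{lem:control-entire-grad-by-tang-grad}. The only organizational difference is that you carry the full gradient to the fat element and project there, controlling $|n_E^+\cdot n_{\Gamma_h}^{K_x}|\lesssim h$, whereas the paper transfers $\nablash v$ directly and instead bounds $\|\jump{\Psh}\|_{L^\infty(F)}\lesssim h$ --- two equivalent uses of the same geometric fact.
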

\begin{proof}

We start with observing that since both $n_{\Gamma_h}$ and $\nabla v$
are piecewise constant functions on $\mcT_h$ so is $\nablash v$.
Then successively employing the inverse estimates~\eqref{eq:inverse-estimate-cut-on-E},\eqref{eq:inverse-estimates-boundary} 
  \begin{align}
  h \| n_E \cdot \nablash v \|^2_{\mcE_h}
  & \lesssim 
  h^{-1} \| \nablash v \|^2_{\mcT_h} 
  \\
  &\lesssim 
  h^{-1} 
  \sum_{x \in \mcX_h}
  \| \nablash v \|^2_{\mcT_{x,h}} 
  \\
  &\lesssim 
  h^{-1} 
  \sum_{x \in \mcX_h}
  \| \nablash v \|^2_{T_x}
   + \|\jump{\nablash v} \|_{\mcF_h}^2
  \\
  &\lesssim 
  \sum_{x \in \mcX_h}
  \| \nablash v \|^2_{K_x}
  + \|\jump{\nablash v} \|_{\mcF_h}^2
\label{eq:conormal-flux-est-step-I}
  \end{align}
where in the last to steps we applied
Lemma~\ref{lem:l2norm-control-via-jumps},
Eq.~\ref{eq:l2norm-control-via-jumps-I} to $\nablash$ and the fat
intersection covering property as defined in
Section~\ref{ssec:fat-intersection-covering}.
Since the first term in~\eqref{eq:conormal-flux-est-step-I} is bounded by 
$\| \nablash v \|_{\Gamma_h}^2$, it only remains to estimate the second term.
Denoting with 
$\dgmean{v}|_F = \tfrac{1}{2}(v^+_T + v^-_T)$
the mean value of any piecewise smooth function or operator accross
$F$, standard identities for the jump-operator 
and the boundedness of $\Psh$ give
\begin{align}
  \| \jump{\nablash v} \|_{\mcF_h}^2
  \lesssim
  \|  \jump{\Psh} \dgmean{\nabla v} \|_{\mcF_h}^2
  + \|  \dgmean{\Psh} \jump{\nabla v} \|_{\mcF_h}^2
  \lesssim
  \|  \jump{\Psh} \dgmean{\nabla v} \|_{\mcF_h}^2
  + \| \jump{\nabla v} \|_{\mcF_h}^2
  = I + II
\end{align}
To conclude the proof we need to estimate $\jump{\Psh}$.
But as in the proof of Lemma~\ref{lem:BBTbound}, we see that
\begin{align}
  \|\jump{\Psh}\|_{L^{\infty}(F)}^2
  = 
  \|\jump{\Psh - \Ps}\|_{L^{\infty}(F)}^2
  \lesssim h^2
\end{align}
and hence by Lemma~\ref{lem:control-entire-grad-by-tang-grad} we
arrive at the final estimate
\begin{align}
  I \lesssim
  \sum_{T\in\mcT_h} h^2 \|\nabla v \|_{\partial T}^2
  \lesssim
  h \|\nabla v \|_{N_h}^2
  \lesssim
  h^2 \|\nablash v \|_{\Gamma_h}^2 + \tn v \tn_{\mcF_h}^2
\end{align}
\end{proof}

Before we turn to the main stability result, we quickly note
as an immediate consequence that
  \begin{align}
    \tn v \tn_{\ast,h} 
    \sim \tn v \tn_{h} \quad \foralls v \in V_h
    \label{eq:discrete-energy-norm-equivalence}
  \end{align}
\begin{proposition} 
  The discrete bilinear form $A_h(\cdot,\cdot)$ is both coercive and
  stable with respect to the discrete
  energy-norm~\eqref{eq:discrete-energy-norm}, that is it satisfies
\begin{equation}
  \label{eq:coercivity}
\tn v \tn_h^2 \lesssim A_h(v,v) \quad \forall v \in V_h
\end{equation}
and
\begin{equation}\label{eq:continuity}
 A_h(v,w) \lesssim \tn v \tn_h\, \tn w \tn_h \quad \forall v,w \in V_h
\end{equation}
whenever $\beta_E$, $\beta_F$ and $\gamma$ are chosen large enough.
\label{prop:stability-estimates-Ah}
\end{proposition}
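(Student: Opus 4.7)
The plan is to treat coercivity and continuity essentially in parallel, with the only nontrivial ingredient being the control of the consistency/symmetry terms on the edges $\mcE_h$. For both properties, the key observation is that the inverse-type estimate~\eqref{eq:inverse-estimate-for-conormal-flux} for the co-normal flux, together with the discrete energy norm equivalence~\eqref{eq:discrete-energy-norm-equivalence}, allows us to bound $\|h^{1/2} \mean{\nablash v}\|_{\mcE_h}$ by $\tn v \tn_h$ (up to the stabilization contributions).

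For coercivity, I would substitute $w = v$ in $A_h$ and observe
\begin{equation*}
A_h(v,v) = \|\nablash v\|_{\mcK_h}^2 - 2\sum_{E\in\mcE_h}(\mean{\nabla v},\jump{v})_E + \sum_{E\in\mcE_h}\beta_E h^{-1}\|\jump{v}\|_E^2 + j_h(v,v).
\end{equation*}
Applying Young's inequality with a small parameter $\epsilon > 0$ on each edge term,
\begin{equation*}
2|(\mean{\nabla v},\jump{v})_E| \leqslant \epsilon\, h\|\mean{\nabla v}\|_E^2 + \epsilon^{-1}\,h^{-1}\|\jump{v}\|_E^2,
\end{equation*}
and summing over $E\in\mcE_h$, Lemma~\ref{lem:inverse-estimate-for-conormal-flux} gives
\begin{equation*}
h\|\mean{\nabla v}\|_{\mcE_h}^2 \lesssim \|\nablash v\|_{\Gamma_h}^2 + \tn v \tn_{\mcF_h}^2.
\end{equation*}
Hence for $\epsilon$ sufficiently small and $\beta_E,\beta_F,\gamma$ chosen large enough, the term $\epsilon\,\|\nablash v\|_{\Gamma_h}^2$ is absorbed by the leading tangential gradient contribution, the edge jump term $\epsilon^{-1}\|h^{-1/2}\jump{v}\|_{\mcE_h}^2$ is absorbed into $\beta_E\|h^{-1/2}\jump{v}\|_{\mcE_h}^2$, and the $\tn v\tn_{\mcF_h}^2$ piece is controlled by (a fraction of) $j_h(v,v)$. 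This yields the bound $\tn v\tn_h^2 \lesssim A_h(v,v)$.

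For continuity, I would apply Cauchy--Schwarz term-by-term in~\eqref{eq:ah-def}--\eqref{eq:jh-def}. The volume, edge jump and face terms are immediately bounded by $\tn v\tn_h\,\tn w\tn_h$. For the two consistency terms, Cauchy--Schwarz on each edge gives
\begin{equation*}
\Bigl|\sum_{E\in\mcE_h}(\mean{\nabla v},\jump{w})_E\Bigr| \leqslant \|h^{1/2}\mean{\nablash v}\|_{\mcE_h}\,\|h^{-1/2}\jump{w}\|_{\mcE_h},
\end{equation*}
and the first factor is controlled by $\tn v\tn_{\ast,h}$ which, by the equivalence~\eqref{eq:discrete-energy-norm-equivalence} for discrete functions (a direct consequence of Lemma~\ref{lem:inverse-estimate-for-conormal-flux}), is bounded by $\tn v\tn_h$; the symmetric term is treated identically.

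The main obstacle is, as usual for cut DG methods, making sure the inverse estimate for the mean co-normal flux is independent of how the discrete surface cuts the background mesh; this is exactly what Lemma~\ref{lem:inverse-estimate-for-conormal-flux} provides thanks to the fat intersection covering and the ghost-penalty-type control supplied by $j_h$. Once that estimate is in hand, the proof reduces to standard DG coercivity/continuity arguments with appropriately chosen penalty parameters.
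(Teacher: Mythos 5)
Your proposal is correct and follows essentially the same route as the paper: coercivity via Young's inequality with a small parameter $\epsilon$ on the edge consistency terms, followed by the inverse estimate of Lemma~\ref{lem:inverse-estimate-for-conormal-flux} to absorb the co-normal flux contribution into the tangential gradient and the stabilization, and continuity via term-by-term Cauchy--Schwarz combined with the norm equivalence~\eqref{eq:discrete-energy-norm-equivalence}. No gaps.
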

\begin{proof} 
{\bf (\ref{eq:coercivity}):}
Starting from the definition of $A_h$ and applying Cauchy's inequality with $\epsilon$, we have
\begin{align}
A_h(v,v) &\gtrsim \| \nablash v\|^2_{\mcK_h}
 - \epsilon \|h^{1/2}\langle n_E \cdot \nabla v \rangle\|_{\mcE_h}^2
+ (\beta_E - \epsilon^{-1}) \|h^{-1/2}\jump{v}\|_{\mcE_h}^2
+ \tn v \tn_{\mcF_h}^2
\end{align}
and employing Lemma~\ref{lem:inverse-estimate-for-conormal-flux}, we immediately see
that
\begin{align}
\nonumber
A_h(v,v) &\gtrsim \| \nablash v\|^2_{\mcK_h}
+ \|h^{-1/2}\jump{v}\|_{\mcE_h}^2
+ \tn v \tn_{\mcF_h}^2
\end{align}
when choosing $\epsilon$ small and $\beta_E$ large enough.
\\
\noindent{\bf (\ref{eq:continuity}):} Follows directly from
the Cauchy-Schwarz inequality and the norm
equivalence~\eqref{eq:discrete-energy-norm-equivalence}. 
\end{proof}

\section{A Priori Error Estimates}
\label{sec:a-priori-estimates}
The goal of this section is to formulate and prove the main a priori
estimates for the proposed cut discontinuous Galerkin method.
We proceed in two steps. First, an abstract Strang-type lemma is
established which reveals that the overall error can be attributed to
two sources, namely an interpolation error and a quadrature error 
caused by the mismatch of the smooth surface and its discrete counterpart.
Then the quadrature error is bounded using the geometric estimates
from Section~\ref{sec:domain-perturbation}.

\subsection{Strang's Lemma}
Recalling the definition of the lifted discontinuous Galerkin
form~\eqref{eq:lifted-dg-form}, we can state 
\begin{lemma} 
  \label{lem:strang}
With $u$ the solution of \eqref{eq:LB}
and $u_h$ the solution of \eqref{eq:weak-cutfem-formulation} it holds
\begin{align}
  \tn u^e - u_h \tn_{\ast, h} &\leq \tn u^e - I_h u^e \tn_{\ast, h}
\\ \nonumber
&\qquad
+ \sup_{v \in V_h} \tn v \tn_h^{-1} 
\Big( 
a_h(I_h u^e,v) - a_h^l(I_h^l u^e,v^l)\Big)
\\ \nonumber
&\qquad
+ \sup_{v \in V_h} \tn v \tn_h^{-1} \Big( l(v^l) - l_h(v)\Big)
\end{align}
\end{lemma}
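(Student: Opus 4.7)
The approach is a standard Strang-type decomposition tailored to the geometric variational crime. I split the error via the interpolant and then reduce to controlling the discrete residual by coercivity. First, by the triangle inequality,
\begin{equation*}
\tn u^e - u_h \tn_{\ast,h} \leq \tn u^e - I_h u^e \tn_{\ast,h} + \tn I_h u^e - u_h \tn_{\ast,h},
\end{equation*}
so the first summand is already the interpolation contribution of the target bound. Setting $\phi := I_h u^e - u_h \in V_h$, the norm equivalence~\eqref{eq:discrete-energy-norm-equivalence} gives $\tn \phi \tn_{\ast,h} \sim \tn \phi \tn_h$, and coercivity of $A_h$ (Proposition~\ref{prop:stability-estimates-Ah}) combined with a standard duality argument yields $\tn \phi \tn_h \lesssim \sup_{v \in V_h \setminus \{0\}} A_h(\phi, v)/\tn v \tn_h$.

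Next, I expose the geometric consistency errors by expanding $A_h(\phi, v)$. Using the discrete equation $A_h(u_h, v) = l_h(v)$, inserting $\pm a_h^l(I_h^l u^e, v^l)$ and $\pm l(v^l)$, and invoking the continuous weak form $a_h^l(u, v^l) = l(v^l)$ from~\eqref{eq:weakexact} to write $a_h^l(I_h^l u^e, v^l) = a_h^l(I_h^l u^e - u, v^l) + l(v^l)$, one obtains
\begin{align*}
A_h(\phi,v) &= a_h(I_h u^e, v) + j_h(I_h u^e, v) - l_h(v) \\
&= \bigl[a_h(I_h u^e, v) - a_h^l(I_h^l u^e, v^l)\bigr] + \bigl[l(v^l) - l_h(v)\bigr] \\
&\quad + a_h^l(I_h^l u^e - u, v^l) + j_h(I_h u^e, v).
\end{align*}
After dividing by $\tn v \tn_h$ and taking the supremum, the first two bracketed quantities produce precisely the two sup-terms in the lemma.

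The remaining task, which I expect to be the only non-routine point, is to absorb the two residuals $a_h^l(I_h^l u^e - u, v^l)$ and $j_h(I_h u^e, v)$ into $\tn u^e - I_h u^e \tn_{\ast,h} \cdot \tn v \tn_h$. For the lifted bilinear residual, continuity of $a_h^l$ in the $\tn \cdot \tn_{\ast}$ norm combined with Lemma~\ref{lem:energy-norm-equivalences}, applied both to $I_h u^e - u^e$ (whose lift equals $I_h^l u^e - u$ since $u^{el} = u$) and to $v$, yields the bound. For the stabilization residual, smoothness of $u^e$ across all interior faces gives $j_h(u^e, v) = 0$, hence $j_h(I_h u^e, v) = j_h(I_h u^e - u^e, v)$; Cauchy-Schwarz on $j_h$ together with the fact that its induced seminorm is dominated by $\tn \cdot \tn_{\ast,h}$ then finishes the estimate. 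The main subtlety throughout is ensuring that the lifted energy norm $\tn \cdot \tn_{\ast}$ and its discrete counterpart $\tn \cdot \tn_{\ast,h}$ are genuinely comparable so that no power of $h$ is lost in absorbing these residuals; this is exactly the content of Lemma~\ref{lem:energy-norm-equivalences}. Collecting everything yields the stated inequality.
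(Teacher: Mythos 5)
Your proof is correct and follows essentially the same route as the paper: the same triangle-inequality split via the interpolant, coercivity of $A_h$, insertion of $\pm a_h^l(I_h^l u^e,v^l)$ and $\pm l(v^l)$ together with the consistency relation \eqref{eq:weakexact}, and the same treatment of the residuals $a_h^l(I_h^l u^e-u,v^l)$ and $j_h(I_h u^e,v)=j_h(I_h u^e-u^e,v)$ via Lemma~\ref{lem:energy-norm-equivalences} and Cauchy--Schwarz. The only cosmetic difference is that you phrase the coercivity step as an inf-sup bound over general $v\in V_h$ while the paper tests directly with $e_h=I_hu^e-u_h$; the resulting estimates are identical.
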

\begin{proof}
Let $e_h = I_h u^e - u_h$. Thanks to triangle inequality 
$\tn u^e - u_h \tn_{\ast,h} \leqslant \tn u^e - I_h u^e \tn_{\ast,h}
+ \tn e_h \tn_{\ast,h}$ and the equivalences of the discrete energy
norms~\eqref{eq:discrete-energy-norm-equivalence} for $v \in V_h$, it is enough to estimate $\tn e_h
\tn_{h}$.
We have
\begin{align}
 \tn e_h \tn_h^2 & \lesssim A_h(I_h u^e - u_h,e_h)
\\
&=A_h(I_h u^e,e_h) - l_h(e_h)
\\
&=A_h(I_h u^e,e_h) - a_h^l(u, e_h^l) + l(e_h^l)
- l_h(e_h)
\\
&=  \Big( a_h(I_h u^e,e_h) - a_h^l(I_h^l u^e, e_h^l)\Big) + \Big( l(e_h^l) - l_h(e_h)\Big)
\\ \nonumber
&\qquad - a_h^l(u - I_h^l u^e, e_h^l) + j_h( I_h u^e, e_h)
\\
&=I + II + III + IV
\end{align}
The bounds for first two terms are immediate:
\begin{align}
  I &\leq \tn e_h \tn_h \sup_{v \in V_h} \tn v \tn_h^{-1} \Big( a_h(I_h
  u^e,v) - a_h^l(I_h^l u,v^l)\Big)
\\
II &\leq \tn e_h \tn_h \sup_{v \in V_h} \tn v \tn_h^{-1} \Big( l(v^l) - l_h(v)\Big)
\end{align}
Thanks to~\eqref{eq:energy-norm-equivalences}
and~\eqref{eq:discrete-energy-norm-equivalence}, we also have
\begin{align}
  III &\leq \tn u - I_h^l u^e\tn_{\ast} \, \tn e_h^l \tn_{\ast}
  \lesssim \tn u^e - I_h u^e \tn_{\ast,h} \, \tn e_h \tn_{h}
  \\
  IV &= j_h(I_h u^e - u^e,e_h) \leq  \tn u^e - I_h u^e \tn_{\mcF_h}
  \tn e_h \tn_{\mcF_h}
\end{align}
Collecting the estimates and dividing by $\tn e_h \tn_h$ concludes the proof.
\end{proof}

\subsection{Quadrature Error Estimates}
Next, we show how the abstract quadrature error appearing in Strang's lemma can be bounded in terms of the mesh size and the discrete energy norm.
\begin{lemma}
\label{lem:quadrature-est} 
The following estimates hold
  \begin{align}
    \label{eq:quadrature-est-a}
    |a_h^l(v^l,w^l) - a_h(v,w)| &\lesssim h^2 \tn v \tn_h \tn w \tn_h
    \quad \forall v,w \in V_h
    \\
    \label{eq:quadrature-est-l}
    |l(v^l) - l_h(v)| &\lesssim h^2 \tn v \tn_h
    \quad \forall v \in V_h
  \end{align}
\end{lemma}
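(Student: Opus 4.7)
\textbf{Plan for Lemma~\ref{lem:quadrature-est}.} The overall plan is to handle both estimates by writing each integral over a lifted geometric entity as an integral over its discrete counterpart using the change-of-variables relations $\dK^l = |B|_{d-1} \dK$ and $\dE^l = |B|_{d-2}\dE$, then harvesting the $O(h^2)$ perturbation bounds from Section~\ref{sec:domain-perturbation}. The linear form estimate~\eqref{eq:quadrature-est-l} is the easier piece: using $v^l\circ p = v$ and $f = f^e \circ p^{-1}$ one obtains $l(v^l) = (f^e, v\,|B|_{d-1})_{\Gamma_h}$, so the difference equals $(f^e, v(|B|_{d-1}-1))_{\Gamma_h}$. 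Cauchy--Schwarz combined with $\|1-|B|_{d-1}\|_{L^\infty(\mcK_h)} \lesssim h^2$ from Lemma~\ref{lem:detBbounds}, the bound $\|f^e\|_{\Gamma_h} \lesssim \|f\|_{\Gamma}$ from Lemma~\ref{lem:norm-equivalences}, and the discrete Poincar\'e inequality~\eqref{eq:discrete-poincare-Gammah} (applied to the zero-mean component of $v$) then close the estimate.

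For~\eqref{eq:quadrature-est-a} I would treat each of the three pieces of $a_h^l - a_h$ in turn. For the element gradient term, using $\nablas v^l = B^{-T} \nablash v$ and the change of variables gives $(\nablas v^l, \nablas w^l)_{K^l} = \int_K \nablash v \cdot (|B|_{d-1} B^{-1}B^{-T})\nablash w\,\dK$; a short calculation from the representation $B = \Ps(I - \rho\mcH)\Psh$ together with Lemmas~\ref{lem:BBTbound} and~\ref{lem:detBbounds} yields $\|\Psh - \Psh\,|B|_{d-1}B^{-1}B^{-T}\,\Psh\|_{L^\infty(\mcK_h)} \lesssim h^2$, so the element contribution is bounded by $h^2\|\nablash v\|_{\mcK_h}\|\nablash w\|_{\mcK_h}$. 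For the jump penalty term, pulling back the integral and using $\|1-|B|_{d-2}\|_{L^\infty(\mcE_h)} \lesssim h^2$ gives directly a contribution bounded by $h^2 \|h^{-1/2}\jump{v}\|_{\mcE_h}\|h^{-1/2}\jump{w}\|_{\mcE_h}$.

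The flux term $\sum_E(\mean{\nabla v}, \jump{w})_E$ will be the main obstacle and the place where the machinery of Section~\ref{sec:domain-perturbation} is really needed. Applying the change of variables and rewriting one side at a time gives
\[
\bigl(\nablas v^l \cdot n_{E^l}\bigr)\circ p \cdot |B|_{d-2} = \nablash v \cdot \bigl(|B|_{d-2} B^{-1}(n_{E^l}\circ p)\bigr),
\]
and Lemma~\ref{lem:cornormal-lifted-bound} (pulled back from $E^l$ to $E$, then composed with the bounded operator $B^{-1}$) yields $\||B|_{d-2} B^{-1}(n_{E^l}\circ p) - n_E\|_{L^\infty(E)} \lesssim h^2$ on each side of the edge. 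The resulting error therefore has the form $h^2 \sum_E \|\nablash v\|_E \|\jump{w}\|_E$ (and symmetrically for the twin flux $(\jump{v}, \mean{\nabla w})_E$). To close in the discrete energy norm I would apply Cauchy--Schwarz in the edge sum and then control $h\|\nablash v\|_{\mcE_h}^2 \lesssim \tn v\tn_h^2$ by exactly the argument used in the proof of Lemma~\ref{lem:inverse-estimate-for-conormal-flux}: the point is that $\nablash v$ is piecewise constant, so the same cut inverse estimate~\eqref{eq:inverse-estimate-cut-on-E}, fat intersection covering, and Lemma~\ref{lem:control-entire-grad-by-tang-grad} apply with $n_E\cdot\nablash v$ replaced by $\nablash v$ itself. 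Combining everything gives the bound $h^2 \tn v\tn_h\tn w\tn_h$, completing the proof.
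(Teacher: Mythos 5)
Your proposal is correct and follows essentially the same route as the paper: term-by-term comparison of the three pieces of $a_h^l-a_h$ via the change-of-variables factors $|B|_{d-1}$, $|B|_{d-2}$, the operator bounds of Lemma~\ref{lem:BBTbound}, and the co-normal estimate of Lemma~\ref{lem:cornormal-lifted-bound} for the flux term, with the linear form handled by $\|1-|B|_{d-1}\|_{L^\infty}\lesssim h^2$ and the discrete Poincar\'e inequality. The only cosmetic differences are that you pull integrals back to the discrete entities while the paper pushes them forward to the lifted ones, and that you close the flux estimate by rerunning the argument of Lemma~\ref{lem:inverse-estimate-for-conormal-flux} directly rather than invoking the resulting norm equivalence $\tn\cdot\tn_{\ast,h}\sim\tn\cdot\tn_h$; both are equivalent.
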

\begin{proof} {\bf (\ref{eq:quadrature-est-a}):}
  After lifting the bilinear~$a_h(\cdot, \cdot)$ from $\Gamma_h$ to
  $\Gamma$, each of its contribution can be estimated by successively
  employing the bounds for
  determinants~\eqref{eq:detBbound}--\eqref{eq:detBEbound},
  the operator norm estimates~\eqref{eq:BBTbound},
  and the norm
  equivalences~\eqref{eq:norm-equivalences-l2}--\eqref{eq:norm-equivalences-h1}.
  In doing so we obtain for each $K \in \mcK_h$
  \begin{align}
    \nonumber
    &(\nablas v^l,\nablas w^l)_{K^l} - (\nablash v,\nablash w)_{K}
    \\
    &\qquad =(\nablas v^l,\nabla w^l)_{K^l} - ((\nablash v)^l,(\nablash w)^l |B|_d^{-1})_{K^l}
    \\
    &\qquad =((\Ps - |B|_{d-1}^{-1} B B^T)\nablas v^l,\nablas w^l)_{K^l}
    \\
    &\qquad \lesssim h^2 \| \nablas v^l\|_{K^l} \|\nablas w^l\|_{K^l}
  \end{align}
  Keeping the estimate for the lifted discrete
  co-normal~\eqref{eq:cornormal-lifted-bound} in mind, each edge
  contribution gives
  \begin{align}
    \nonumber
    &(\langle n_E \cdot \nablas v^l \rangle, [w^l])_{E^l}
    - (\langle n_{h,E} \cdot \nablash v \rangle, [w])_{E}
    \\
    &\qquad = (\langle n_E \cdot \nablas v^l \rangle, [w^l])_{E^l}
    - (\langle (n_{h,E} \cdot  B^{T} \nablas v^l \rangle, [w^l])_{E}
    \label{eq:conormal-flux-lifted-estimate-I}
    \\
    &\qquad = (\langle (n_E - |B|_{d-2}^{-1} B n_{h,E})\cdot \nablas v^l \rangle, [w^l])_{E^l}
    \label{eq:conormal-flux-lifted-estimate-II}
    \\
    &\qquad \lesssim h^2 \| h^{1/2} \langle \nablas v^l \rangle \|_{E^l}
    \|h^{-1/2} [w^l]\|^2_{E^l}
    \label{eq:conormal-flux-lifted-estimate-III}
    \\
    &\qquad \lesssim h^2 \| h^{1/2} \langle \nablash v \rangle \|_{E}
    \|h^{-1/2} [w]\|^2_{E}
  \end{align}
  and similarly
  \begin{align}
    \nonumber
    &(h^{-1}[v^l], [w^l])_{E^l}
    - (h^{-1}[v], [w])_{E}
    \\
    &\qquad = (h^{-1}(1-|B|_{d-2}^{-1})\jump{v^l}, \jump{w^l})_{E^l}
    \\
    &\qquad  \lesssim h^2
    \|h^{-1/2} \jump{v}\|^2_{E}\|h^{-1/2} \jump{w}\|^2_{E}
  \end{align}
  \\
  \noindent  {\bf (\ref{eq:quadrature-est-l}):} For the right hand side we have
  \begin{align}
    l(w^l)- l_h(w) &= (f,w^l)_\Gamma - (f^e, w)_\Gammah
    \\
    &= (f,w^l (1 -|B|_d^{-1}))_\Gamma
    \\
    &\lesssim h^2 \|w^l\|_{\Gamma}
    \\
    &\lesssim h^2 \tn w \tn_{h}
  \end{align}
  where in last step, the Poincar\'e
  inequality~\eqref{eq:discrete-poincare-Gammah} was used
  after passing from $\Gamma$ to $\Gamma_h$.
\end{proof}

\subsection{Error Estimates}
\label{ssec:apriori-est}
Finally, we can state and prove the main a priori estimates.
\begin{theorem} The following a priori error estimates hold
  \label{thm:aprioriest}
  \begin{align}
    \label{eq:energyest}
    \tn u^e - u_h \tn_{\ast, h} &\lesssim h \| f \|_{\Gamma}
    \\ \label{eq:ltwoest}
    \| u^e - u_h \|_{\Gammah} &\lesssim h^2 \| f \|_{\Gamma}
  \end{align}
\end{theorem}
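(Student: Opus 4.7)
The plan is to prove the two estimates in sequence, with the energy estimate following essentially by combining the previously established Strang lemma, interpolation estimate, and quadrature bounds, and the $L^2$ estimate obtained by a duality (Aubin--Nitsche) argument.

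For the energy estimate \eqref{eq:energyest}, I apply Lemma~\ref{lem:strang} and estimate each of the three summands separately. The interpolation term is bounded directly via Lemma~\ref{lem:interpolenergy} by $h\|u\|_{2,\Gamma}$. For the two quadrature-type terms I invoke Lemma~\ref{lem:quadrature-est}, which yields for any $v \in V_h$ bounds of order $h^2 \tn I_h u^e \tn_h$ and $h^2$, respectively. Using the stability of $I_h$ in the triple-norm (consequence of Lemma~\ref{lem:interpolenergy} plus the triangle inequality $\tn I_h u^e \tn_h \lesssim \tn u^e \tn_{\ast,h} + h\|u\|_{2,\Gamma}$ together with $\tn u^e \tn_{\ast,h} \lesssim \|u\|_{2,\Gamma}$), this gives a total bound of order $h \|u\|_{2,\Gamma}$. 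Finally, the elliptic regularity estimate \eqref{eq:ellreg} converts $\|u\|_{2,\Gamma}$ into $\|f\|_\Gamma$.

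For the $L^2$ estimate \eqref{eq:ltwoest}, I use a duality argument. Let $e = u^e - u_h$ on $\Gamma_h$, and let $\psi := e^l - \lambda_\Gamma(e^l) \in H^1(\Gamma)/\RR$. Introduce the dual solution $\phi \in H^1(\Gamma)/\RR$ of $-\Delta_\Gamma \phi = \psi$, so that by elliptic regularity $\|\phi\|_{2,\Gamma} \lesssim \|\psi\|_\Gamma \lesssim \|e\|_{\Gamma_h}$ (after noting that the error in the mean is itself $O(h^2\|f\|_\Gamma)$ thanks to $\int_{\Gamma_h} u_h = 0 = \int_\Gamma u$ up to the geometric variational crime). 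Testing the dual problem against $\psi$ and using $\psi = e^l - \text{(mean)}$, I write $\|\psi\|_\Gamma^2 = a(\phi, u - u_h^l) + \text{(mean correction)}$. Then I insert the interpolant $I_h \phi^e$ and use Galerkin orthogonality in the discrete problem, which introduces additional quadrature error terms between $a$ and $a_h$, as well as between $l$ and $l_h$. The resulting decomposition has a Galerkin part of the form $a_h^l(u - I_h^l u^e, \phi^l - I_h^l \phi^e)$, bounded by $\tn u^e - I_h u^e\tn_{\ast,h}\, \tn \phi^e - I_h \phi^e \tn_{\ast,h} \lesssim h^2 \|u\|_{2,\Gamma}\|\phi\|_{2,\Gamma}$, plus quadrature error contributions, each of order $h^2 \tn u_h \tn_h \tn I_h\phi^e \tn_h \lesssim h^2 \|f\|_\Gamma \|\phi\|_{2,\Gamma}$ by Lemma~\ref{lem:quadrature-est} together with the energy estimate \eqref{eq:energyest}.

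The main obstacle is bookkeeping the geometric variational crime in the duality step: the dual problem lives on $\Gamma$ while $u_h$ lives on $\Gamma_h$, so one has to carefully lift the discrete error, subtract means so that the dual problem is well-posed, and account for the fact that Galerkin orthogonality only holds with respect to $A_h(\cdot,\cdot)$ and $l_h(\cdot)$, not $a(\cdot,\cdot)$ and $l(\cdot)$. All these mismatches produce correction terms controlled by $h^2$ via the quadrature estimates of Lemma~\ref{lem:quadrature-est}, which is precisely why the order $h^2$ in \eqref{eq:ltwoest} is recovered. Dividing through by $\|\psi\|_\Gamma \sim \|e\|_{\Gamma_h}$ and absorbing the mean-correction terms yields the claim.
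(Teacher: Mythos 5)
Your proposal follows essentially the same route as the paper: Strang's lemma combined with the interpolation, quadrature and elliptic regularity estimates for the energy bound, and an Aubin--Nitsche duality argument with a separate treatment of the mean-value error and of the geometric consistency (quadrature) terms for the $L^2$ bound. The only slip is cosmetic: after inserting $I_h\phi^e$ the ``Galerkin part'' that actually arises is $a_h^l(e,\phi-(I_h\phi^e)^l)$ rather than the double difference $a_h^l(u-I_h^l u^e,\phi-(I_h\phi^e)^l)$, but it is bounded by $\tn e\tn_{\ast}\,\tn \phi-(I_h\phi^e)^l\tn_{\ast}\lesssim h^2\|f\|_{\Gamma}\|\phi\|_{2,\Gamma}$ using the already-established energy estimate, so the same $O(h^2)$ conclusion results.
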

\begin{proof} {\bf (\ref{eq:energyest}):} Combining Strang's
  Lemma~\ref{lem:strang}, the interpolation error
  estimate~\eqref{eq:interpolenergy},
  the quadrature estimates in Lemma \ref{lem:quadrature-est}, and finally
  the elliptic regularity estimate \eqref{eq:ellreg} the proof
  follows immediately.

  \noindent  
  {\bf (\ref{eq:ltwoest}):} Since the average 
  $\lambda_{\Gamma}(u_h^l)= |\Gamma|^{-1}\int_\Gamma u_h^l$ 
  is not equal to zero we decompose the error as follows 
  \begin{align}
    e = u - u_h^l = \underbrace{u - (u_h^l - \lambda_\Gamma(u_h^l))}_{\tilde{e}} 
    + \underbrace{\lambda_\Gammah(u_h) -  \lambda_\Gamma(u_h^l)}_{e_c} 
  \end{align} 
  where the first term has average zero on $\Gamma$ and the 
  second term is the error in the average. 

  To estimate $\tilde{e}$ we let $\phi \in H^1(\Gamma)/ \mathbb{R}$ be
the solution to the dual problem $(\nablas v,\nablas \phi)_{\Gamma} =
(\tilde{e},v)_\Gamma \; \forall v \in H^1(\Gamma)/\mathbb{R}$. Then we
have the elliptic regularity estimate $\|\phi \|_{H^2(\Gamma)}
\lesssim \|\tilde{e}\|_{\Gamma}$.  Exploiting the fact that
$\jump{\phi}|_E^l = \jump{n_E^l \cdot \nablas \phi}_E^l = 0$ by regularity
and setting $v=\tilde{e}$ and we obtain
  \begin{align}
    \|\tilde{e}\|^2_\Gamma &= a_h^l(\tilde{e},\phi) 
    \\
    &=a_h^l(e,\phi)
    \\
    &= a_h^l(e,\phi - (I_h \phi)^l ) + a_h^l(e,(I_h \phi)^ f \\
    &\lesssim \tn e \tn_{\ast}\, \tn \phi - (I_h \phi^e)^l \tn_{\ast}
    + | a_h^l(e,(I_h \phi^e)^l |
    \\ \label{eq:L2a}
    &\lesssim h \tn e \tn_{\ast} \, \|\phi \|_{H^2(\Gamma)}
    + | a_h^l(e,(I_h \phi)^l |
    \\
    &\lesssim h^2 \| f \|_{\Gamma} \| \tilde{e} \|_{\Gamma}
    + | a_h^l(e,(I_h \phi)^l |
  \end{align}
  where we added and subtracted an interpolant and 
  estimated the first term using the interpolation estimate~\eqref{eq:interpolenergy}
  followed by the elliptic regularity estimate for the dual solution.
  To estimate the second term on the right hand side 
  we note that we have the identity
  \begin{align}
    a(e,v^l) &= a(u,v^l) - a(u_h^l,v) 
    \\
    &=l(v^l) - l_h(v) + a_h(u_h,v) - a(u_h^l,v^l) \qquad \forall v\in V_h
  \end{align}
  Thus using the quadrature estimates~\eqref{eq:quadrature-est-a}) and 
  \eqref{eq:quadrature-est-l}) we obtain
  \begin{align} 
    \label{eq:L2b}
    |a_h^l(e,(I_h \phi^e)^l)|&\lesssim 
    h^{2} \Big( \|f\|_\Gamma + \tn u_h\tn_h \Big) 
    \tn I_h \phi^e \tn_h
    \lesssim 
    h^{2} \|f\|_\Gamma \| e \|_\Gamma
  \end{align}
  where at last we used the stability $\tn u_h \tn_h \lesssim \|f^e\|_{\Gamma_h} \lesssim \|f\|_\Gamma$ of the method and the energy 
  norm stability of the interpolant together with energy stability of the 
  solution to the dual problem. Together estimates (\ref{eq:L2a}) and 
  (\ref{eq:L2b}) give the estimate 
  \begin{equation}
    \|\tilde{e} \|_\Gamma \lesssim h^2 \| f\|_{\Gamma}
  \end{equation}

  To conclude the proof, we note that error in the average $e_c$ can be exactly estimated
as the last term in~\eqref{eq:error-of-average-est}, yielding 
  \begin{align}
    \|e_c\| \lesssim  |\lambda_\Gamma( u_h^l) - \lambda_\Gammah(u_h)|
    \lesssim h^2 \| u_h \|_{\Gamma_h} \lesssim h^2 \| f \|_{\Gamma}
  \end{align}
\end{proof}

\section{Condition Number Estimate}
\label{sec:condition-number-estimate}
Next, we show that the condition number of the stiffness matrix
associated with the bilinear form~\eqref{eq:weak-cutfem-formulation}
can be bounded by $O(h^{-2})$ independently of the position of the
surface $\Gamma$ relative to the background mesh~$\mcT_h$.

Let $\{\phi_i\}_{i=1}^N$ be the standard piecewise linear basis
functions associated with $\mcT_h$ and thus
$v_h = \sum_{i=1}^N V_i \phi_i$ for $v_h \in V_h$ and expansion
coefficients $V = \{V_i\}_{i=1}^N \in \RR^N$.
The stiffness matrix $\mcA$ is given by the relation
\begin{align}
  ( \mcA V, W )_{\RR^N}  = A_h(v_h, w_h) \quad \foralls v_h,w_h \in
  V_h
  \label{eq:stiffness-matrix}
\end{align}
Recalling the definition of $V_h$ and
proposition~\ref{prop:stability-estimates-Ah}, 
the stiffness matrix $\mcA$ 
clearly is a bijective linear mapping 
$\mcA:\widehat{\RR}^N \to \ker(\mcA)^{\perp}$
where we set $\widehat{\RR}^N = \RR^N /\ker(\mcA)$
to factor out the one-dimensional kernel given by 
$\ker{\mcA} = \spann\{(1,\ldots,1)^{\top}\}$.
The operator norm and condition number of the matrix $\mcA$ are then defined by
\begin{align}
  \| \mcA \|_{\RR^N}
  = \sup_{V \in \widehat{\RR}^N\setminus\bfzero}
  \dfrac{\| \mcA V \|_{\RR^N}}{\|V\|_N}
\quad \text{and}
\quad
  \kappa(\mcA) = \| \mcA \|_{\RR^N} \| \mcA^{-1} \|_{\RR^N}
  \label{eq:operator-norm-and-condition-number-def}
\end{align}
respectively.
Following the approach in~\citet{ErnGuermond2006}, 
a bound for the condition number can be derived 
by combining the well-known estimate
\begin{align}
  h^{d/2} \| V \|_{\RR^N}
  \lesssim \| v_h \|_{L^2(N_h)}
  \lesssim
  h^{d/2} \| V \|_{\RR^N}
  \label{eq:mass-matrix-scaling}
\end{align}
which holds for any quasi-uniform mesh $\mcT_h$,
with the Poincar\'e-type estimate~\eqref{eq:discrete-poincare-Nh} and
the following inverse estimate:
\begin{lemma} Let $v \in V_{h,0}$ then the following inverse estimate holds
  \label{lem:inverse-estimate-Ah}
  \begin{align}
    \tn v \tn_h \lesssim h^{-3/2} \| v \|_{N_h}
    \label{eq:inverse-estimate-Ah}
  \end{align}
  \begin{proof}
    First, we note that employing the standard inverse
    estimates~\eqref{eq:inverse-estimates-boundary}, we obtain
    \begin{align}
      \tn v \tn_{\mcF_h}^2 \lesssim h^{-3} \| v \|_{N_h}^2
    \end{align}
    Now, since $\nabla v|_T$ is constant, an application of the inverse
    estimates~\eqref{eq:inverse-estimate-cut-v-on-K} and
    \eqref{eq:inverse-estimate-grad} gives
    \begin{align}
      \| \Psh \nabla v \|_{\mcK_h}^2
      \lesssim 
      \| \nabla v \|_{\mcK_h}^2
      \lesssim
      h^{-1}\| \nabla v \|_{N_h}^2
      \lesssim
      h^{-3}\| v \|_{N_h}^2
    \end{align}
    Similarly, recalling~\eqref{eq:inverse-estimate-cut-on-E}, the co-normal flux  
    and edge-related jump terms can be bounded via 
    \begin{gather}
      \| n_E \cdot \nablash v \|_{\mcE_h}^2
      \lesssim
      h^{-1}\| \nabla v \|_{\mcF_h}^2
      \lesssim 
      h^{-2}\| \nabla v \|_{N_h}^2
      \lesssim
      h^{-3} \| v \|_{N_h}^2
      \\
      h^{-1} \| \jump{v}\|_{\mcE_h}^2
      \lesssim
      h^{-2}\| v\|_{\mcF_h}^2
      \lesssim
      h^{-3} \| v \|_{N_h}^2
    \end{gather}
  which concludes the proof.
  \end{proof}
\end{lemma}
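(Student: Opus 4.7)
The plan is to bound each of the four contributions in the discrete energy norm
\[
\tn v \tn_h^2
= \| \nablash v \|^2_{\mcK_h}
+ \|h^{-1/2} [v] \|^2_{\mcE_h}
+ \|h^{-1} [v] \|^2_{\mcF_h}
+ \| n_F \cdot [\nabla v] \|^2_{\mcF_h}
\]
separately by $h^{-3} \| v \|_{N_h}^2$, which yields the claim after taking square roots. The restriction $v \in V_{h,0}$ plays no role since this is a purely local inverse estimate on $V_h$; only the quasi-uniformity and shape-regularity of $\mcT_h$ together with the cut inverse estimates from Section~\ref{sec:approximation-props} are needed.

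For the tangential gradient term, I would write $\nablash v = \Psh \nabla v$, use boundedness of $\Psh$ to pass to $\| \nabla v \|_{\mcK_h}$, apply the cut inverse estimate~\eqref{eq:inverse-estimate-cut-v-on-K} to move integration from $K \cap T$ to $T$, and then use the standard elementwise inverse estimate~\eqref{eq:inverse-estimate-grad} to obtain the bound $h^{-3} \| v \|_{N_h}^2$. The face-based terms $\|h^{-1} [v]\|_{\mcF_h}^2$ and $\| n_F \cdot [\nabla v] \|^2_{\mcF_h}$ are the most straightforward: apply the classical boundary inverse estimates~\eqref{eq:inverse-estimates-boundary} to replace $F$-integrals by $T$-integrals, followed by~\eqref{eq:inverse-estimate-grad} for the gradient term. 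Each contribution scales like $h^{-3} \| v \|_{N_h}^2$ after counting the powers of $h$ carefully.

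The only mildly non-trivial ingredient is the edge-based jump $\|h^{-1/2}[v]\|^2_{\mcE_h}$, since edges $E \in \mcE_h$ are not contained in the boundary of faces $F \in \mcF_h$ in the usual sense. Here I would first apply the cut inverse estimate on edges~\eqref{eq:inverse-estimate-cut-on-E} to pass from $E \cap F$ to $F$, then chain with the boundary inverse estimate~\eqref{eq:inverse-estimates-boundary} and finally~\eqref{eq:inverse-estimate-grad}, producing the needed factor $h^{-3}$.

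I do not expect any serious obstacle. The argument is a routine composition of standard and cut inverse estimates; the only thing to watch is the bookkeeping of the $h$-powers at each step, in particular making sure the two cut inverse steps that appear for the edge jump do not cost more than the $h^{-3}$ budget. Summing the four bounds then yields $\tn v \tn_h^2 \lesssim h^{-3} \| v \|^2_{N_h}$ and hence the claim.
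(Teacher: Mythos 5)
Your proposal is correct and follows essentially the same route as the paper: each contribution to $\tn v \tn_h^2$ is bounded by $h^{-3}\|v\|_{N_h}^2$ via the appropriate composition of the cut inverse estimates \eqref{eq:inverse-estimate-cut-v-on-K}, \eqref{eq:inverse-estimate-cut-on-E} with the standard ones \eqref{eq:inverse-estimate-grad}, \eqref{eq:inverse-estimates-boundary}, and your $h$-power bookkeeping (including the doubly-chained step for the edge jump) checks out. The only cosmetic difference is that the paper additionally bounds the co-normal flux term $\|n_E\cdot\nablash v\|_{\mcE_h}$, which belongs to $\tn\cdot\tn_{\ast,h}$ rather than $\tn\cdot\tn_h$ and is not needed for the stated claim.
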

We are now in the position to prove the main result of this section:
\begin{theorem} 
  \label{thm:condition-number-estimate}
  The condition number of the stiffness matrix satisfies
  the estimate
\begin{equation}
\kappa( \mcA )\lesssim h^{-2}
\end{equation}
where the hidden constant depends only on the quasi-uniformness
parameters.
\end{theorem}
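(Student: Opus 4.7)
The plan is to follow the standard approach of~\citet{ErnGuermond2006} and bound $\| \mcA \|_{\RR^N}$ and $\| \mcA^{-1} \|_{\RR^N}$ separately, using the discrete energy norm $\tn \cdot \tn_h$ as the intermediate quantity. The key point is that all stability, Poincar\'e, and inverse estimates in the preceding sections have been formulated on the full active region $N_h$ rather than on $\Gamma_h$, which is exactly what makes the argument independent of the position of the surface in the background mesh. The four main ingredients are the mass matrix equivalence~\eqref{eq:mass-matrix-scaling}, the coercivity and continuity of $A_h$ from Proposition~\ref{prop:stability-estimates-Ah}, the discrete Poincar\'e inequality~\eqref{eq:discrete-poincare-Nh}, and the inverse estimate~\eqref{eq:inverse-estimate-Ah}.

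For the upper bound on $\| \mcA \|_{\RR^N}$, I would start from the duality characterization $\| \mcA V\|_{\RR^N} = \sup_{W} (\mcA V, W)_{\RR^N}/\|W\|_{\RR^N}$, translate via~\eqref{eq:stiffness-matrix} to the bilinear form $A_h(v_h,w_h)$, then apply continuity to get $A_h(v_h,w_h)\lesssim \tn v_h \tn_h \tn w_h \tn_h$. Applying the inverse estimate from Lemma~\ref{lem:inverse-estimate-Ah} to both factors turns this into $h^{-3}\|v_h\|_{N_h}\|w_h\|_{N_h}$, and then~\eqref{eq:mass-matrix-scaling} gives $\lesssim h^{d-3}\|V\|_{\RR^N}\|W\|_{\RR^N}$, so $\|\mcA\|_{\RR^N} \lesssim h^{d-3}$.

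For the lower bound on $\mcA$, or equivalently an upper bound on $\|\mcA^{-1}\|_{\RR^N}$, I would exploit coercivity together with Poincar\'e. Starting from
\begin{equation*}
\tn v_h\tn_h^2 \lesssim A_h(v_h,v_h) = (\mcA V, V)_{\RR^N} \leq \|\mcA V\|_{\RR^N}\|V\|_{\RR^N},
\end{equation*}
the Poincar\'e estimate~\eqref{eq:discrete-poincare-Nh} bounds $h^{-1}\|v_h\|_{N_h}^2 \lesssim \tn v_h\tn_h^2$, and then~\eqref{eq:mass-matrix-scaling} gives $h^{d-1}\|V\|_{\RR^N}^2 \lesssim \|v_h\|_{N_h}^2 h^{-1} \lesssim \tn v_h\tn_h^2$. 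Combining yields $\|V\|_{\RR^N} \lesssim h^{1-d}\|\mcA V\|_{\RR^N}$ for all $V\in\widehat{\RR}^N$, that is $\|\mcA^{-1}\|_{\RR^N}\lesssim h^{1-d}$. Multiplying the two bounds gives $\kappa(\mcA)\lesssim h^{d-3}\cdot h^{1-d} = h^{-2}$.

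The argument itself is short and essentially algebraic once one has the right lemmas; the real obstacle is not in this proof but in the preparation, namely that the Poincar\'e estimate~\eqref{eq:discrete-poincare-Nh} and the inverse estimate of Lemma~\ref{lem:inverse-estimate-Ah} must hold on $N_h$ uniformly in the cut configuration. Both ultimately rely on the fat intersection covering of Section~\ref{ssec:fat-intersection-covering} and on the face-based stabilization $j_h$, which transfer mass from badly cut elements to well-behaved neighbors. In particular one should carefully verify that the kernel of $\mcA$ is exactly $\spann\{(1,\dots,1)^\top\}$ and that the restriction to $\widehat{\RR}^N$ is lawful, which is where the zero-average constraint defining $V_{h,0}$ enters through Corollary~\ref{cor:discrete-poincare}.
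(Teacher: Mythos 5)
Your proposal is correct and follows essentially the same route as the paper: the operator norm bound $\|\mcA\|_{\RR^N}\lesssim h^{d-3}$ via continuity, the inverse estimate of Lemma~\ref{lem:inverse-estimate-Ah} and the mass-matrix scaling~\eqref{eq:mass-matrix-scaling}, and the bound $\|\mcA^{-1}\|_{\RR^N}\lesssim h^{1-d}$ via coercivity and the discrete Poincar\'e inequality~\eqref{eq:discrete-poincare-Nh}. Your closing remark about verifying that the kernel is exactly $\spann\{(1,\dots,1)^{\top}\}$ and that the restriction to $\widehat{\RR}^N$ is legitimate is a sensible point of care, but it does not alter the argument.
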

\begin{proof} We need to bound $\| \mcA \|_{\RR^N}$ and $\| \mcA^{-1} \|_{\RR^N}$. 
  To derive a bound for $\| \mcA \|_{\RR^N}$, we first observe that for $w \in V_h$,
\begin{equation}
\tn w \tn_h 
\lesssim h^{-3/2} \| w \|_{\Omega_h}
\lesssim h^{(d-3)/2}\|W\|_{\RR^N}
\end{equation}
where we successively used the inverse estimate~\eqref{eq:inverse-estimate-Ah}
and equivalence~\eqref{eq:mass-matrix-scaling}.
Consequently,
\begin{align}
  \| \mcA V\|_{\RR^N} &= \sup_{W \in \RR^N } 
  \frac{( \mcA V, W)_{\RR^N}}{\| W \|_{\RR^d}}
= \sup_{w \in V_h }  \frac{A_h(v,w)}{\tn w \tn_h} \frac{\tn w \tn_h}{
\| W \|_{\RR^N}}
\lesssim h^{(d-3)/2} \tn v \tn_h 
\lesssim h^{d-3}| V|_N
\end{align}
and thus by the definition of the operator norm, we have
$
\| \mcA \|_{\RR^N} \lesssim h^{d-3}
$.
Next we turn to the estimate of $\| \mcA^{-1}\|_{\RR^N}$.
Starting from \eqref{eq:mass-matrix-scaling} and combining the Poincar\'e
inequality~\eqref{eq:discrete-poincare-Nh} with the stability
estimates~\eqref{eq:coercivity}--\eqref{eq:continuity} and
a Cauchy Schwarz inequality, we arrive at the following chain of
estimates:
\begin{align}
  \| V \|^2_{\RR^N} 
  \lesssim h^{-d} \| v \|^2_{\Omega_h} 
  \lesssim h^{1-d} \tn v \tn_h^2
  \lesssim h^{1-d} A_h(v,v) 
  = h^{1-d} (V, \mcA V)_{\RR^N}
  \lesssim h^{1-d} \| V \|_{\RR^N} \| \mcA V \|_{\RR^N}
\end{align}
and hence $\| V \|_{\RR^N} \lesssim h^{1-d}\| \mcA V\|_{\RR^N}$. 
Now setting $ V = \mcA^{-1} W$ we conclude that
 $
 \| \mcA^{-1}\|_{\RR^N} \lesssim h^{1-d}
 $
and combining estimates for $\| \mcA\|_{\RR^N}$ and $\| \mcA^{-1}\|_{\RR^N}$ the theorem follows.
\end{proof}

\section{Numerical Results}
\label{sec:numerical-results}
We conclude this paper with two numerical studies.
First, we corroborate the theoretical a priori estimates presented in
Section~\ref{ssec:apriori-est} with two convergence experiments. 
The second study serves to illustrate the effect of the different
stabilization terms in~\eqref{eq:jh-def} on the sensitivity of the
condition number with respect to the surface positioning in the
background mesh.

\subsection{Convergence Rate}
To examine the theoretically expected convergence rates,
we consider two numerical examples for the Laplace-Beltrami-type
problem
\begin{align}
  -\Delta_{\Gamma} u + u = f \quad \text{on } \Gamma
  \label{eq:laplace-beltrami-type-problem}
\end{align}
with given analytical reference solution $u$
and surface $\Gamma = \{ x \in \RR^3 : \phi(x) = 0 \}$
defined by 
a known smooth scalar function $\phi$
with $\nabla \phi(x) \neq 0 \,\forall x \in \Gamma$.
The corresponding right-hand side $f$
can be computed using the following representation of the
Laplace-Beltrami operator
\begin{align}
    \Delta_{\Gamma} u = \Delta u - n_{\Gamma} \cdot \nabla \otimes\nabla u \,
    n_{\Gamma} - \textrm{tr}(\nabla n_{\Gamma}) \nabla u \cdot
    n_{\Gamma}
\end{align}
For the first test example we chose
\begin{equation}
  \left\{
    \begin{aligned}
      u_1 &= 
      \sin\left(\dfrac{\pi x }{2}\right)
      \sin\left(\dfrac{\pi y }{2}\right)
      \sin\left(\dfrac{\pi z }{2}\right)
      \\
      \phi_1 &=  x^2 + y^2 + z^2 - 1
    \end{aligned}
  \right.
\end{equation}
while in the second example, we consider the problem defined by
\begin{equation}
  \left\{
    \begin{aligned}
      u_2 &=  xy - 5y + z + xz
      \\
      \phi_2 &=  
      (x^2 - 1)^2
      +
      (y^2 - 1)^2
      +
      (z^2 - 1)^2
      +
      (x^2 + y^2 - 4)^2
      +
      (x^2 + z^2 - 4)^2
      \\
      &\quad +
      (y^2 + z^2 - 4)^2
      - 16
    \end{aligned}
  \right.
\end{equation}
Starting from a structured mesh $\widetilde{\mcT}_0$ for $\Omega =
[-a,a]^3$ with $a$ large enough such $ \Gamma \subseteq \Omega$,
a sequence of meshes
$\{\mcT_k\}_{k=0}^5$ is generated 
for each test case by successively refining
$\widetilde{\mcT}_0$ and ex\-tracting the corresponding active
background mesh as defined
by~\eqref{eq:narrow-band-mesh}. 
Based on the manufactured exact solutions, the experimental order of
convergence (EOC) is then calculated by
\begin{align*}
    \text{EOC}(k) = \dfrac{\log(E_{k-1}/E_{k})}{\log(2)}
\end{align*}
where $E_k$ denotes the error of the numerical
solution $u_k$ measured in a specified norm and computed at refinement level $k$.
In our convergence studies, both $\| \cdot \|_{H^1(\Gamma_h)}$
and $\|\cdot\|_{L^2(\Gamma_h)}$ as well as $\| \cdot
\|_{L^{\infty}(N_h)}$ are used to compute $E_k$.
For the two test cases, the resulting errors for the sequence of
refined meshes are summarized in
Table~\ref{tab:convergence-rates-example-1} and 
Table~\ref{tab:convergence-rates-example-2}, respectively.
The observed EOC confirms the first-order and second-order
convergences rates as predicted by Theorem~\ref{thm:aprioriest}.
Additionally, we observe an optimal, second-order convergence in
the $L^{\infty}(N_h)$-norm.
Repeating the convergence study
with $\beta_E = 0$, $\beta_F = 500$ and
$\gamma$ as before yields almost identical error reduction rates
for the simplified version of~\eqref{eq:ah-def}
described in Remark~\ref{rem:simplified-ah} (see Table~\ref{tab:convergence-rates-example-2}).
The computed solutions are visualized in
Figure~\ref{fig:solution-example-1}.

\begin{figure}[htb]
  \begin{center}
    \includegraphics[width=0.49\textwidth]{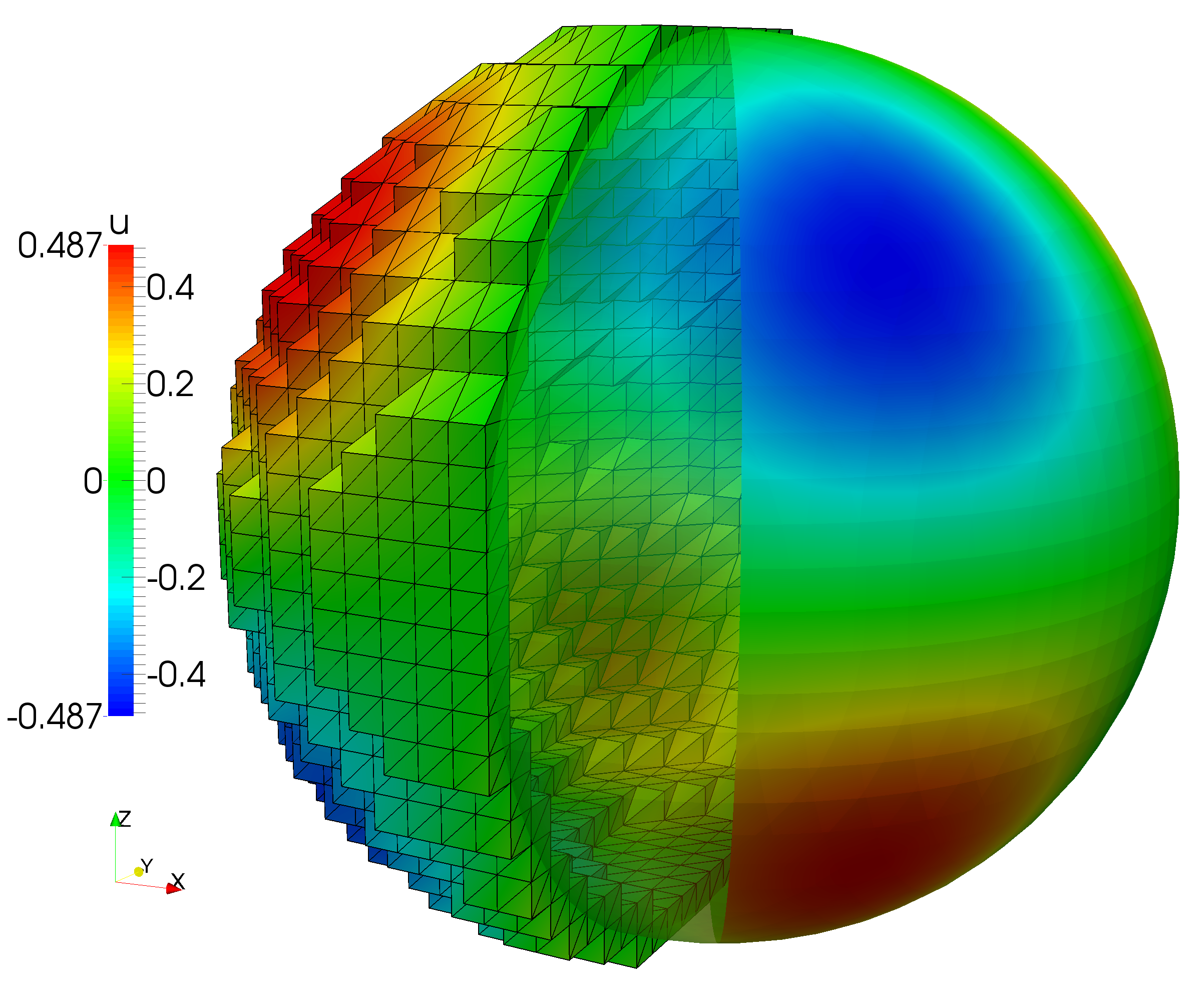}
    \hspace{0.5ex}
    \includegraphics[width=0.49\textwidth]{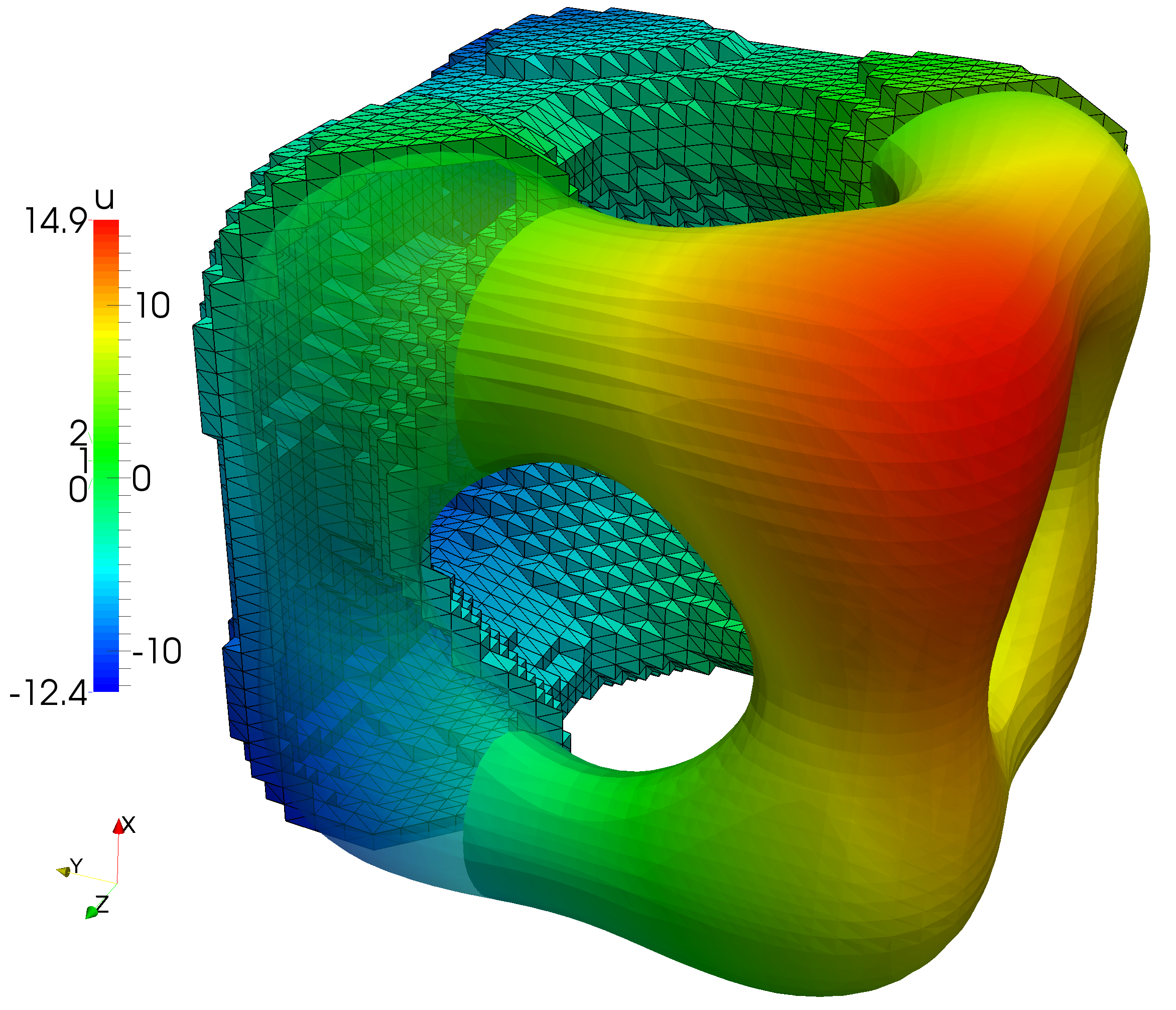}
    \caption{Computed solutions for example 1 (left) and example 2 (right). 
      The left part of each solution plot shows the
      approximation $u_h$ as computed on the active mesh $\mcT_h$ while the right part 
      displays $u_h$ restricted to the surface mesh $\mcK_h$.}
    \label{fig:solution-example-1}
  \end{center}
\end{figure}

\begin{table}[htb]
  \footnotesize
  \centering
  \begin{center}
    \begin {tabular}{cr<{\pgfplotstableresetcolortbloverhangright }@{}l<{\pgfplotstableresetcolortbloverhangleft }cr<{\pgfplotstableresetcolortbloverhangright }@{}l<{\pgfplotstableresetcolortbloverhangleft }cr<{\pgfplotstableresetcolortbloverhangright }@{}l<{\pgfplotstableresetcolortbloverhangleft }c}%
\toprule Level $k$&\multicolumn {2}{c}{$\|u_k - u \|_{H^1(\Gamma _h)}$}&EOC&\multicolumn {2}{c}{$\|u_k - u \|_{L^2(\Gamma _h)}$}&EOC&\multicolumn {2}{c}{$\|u_k - u \|_{L^\infty (N_h)}$}&EOC\\\midrule %
\pgfutilensuremath {0}&$1.27$&$\cdot 10^{0}$&--&$2.49$&$\cdot 10^{-1}$&--&$1.44$&$\cdot 10^{-1}$&--\\%
\pgfutilensuremath {1}&$6.87$&$\cdot 10^{-1}$&\pgfutilensuremath {0.89}&$7.28$&$\cdot 10^{-2}$&\pgfutilensuremath {1.78}&$5.10$&$\cdot 10^{-2}$&\pgfutilensuremath {1.50}\\%
\pgfutilensuremath {2}&$3.73$&$\cdot 10^{-1}$&\pgfutilensuremath {0.88}&$1.93$&$\cdot 10^{-2}$&\pgfutilensuremath {1.91}&$1.35$&$\cdot 10^{-2}$&\pgfutilensuremath {1.92}\\%
\pgfutilensuremath {3}&$1.74$&$\cdot 10^{-1}$&\pgfutilensuremath {1.10}&$4.81$&$\cdot 10^{-3}$&\pgfutilensuremath {2.01}&$3.70$&$\cdot 10^{-3}$&\pgfutilensuremath {1.86}\\%
\pgfutilensuremath {4}&$8.65$&$\cdot 10^{-2}$&\pgfutilensuremath {1.01}&$1.20$&$\cdot 10^{-3}$&\pgfutilensuremath {2.01}&$9.42$&$\cdot 10^{-4}$&\pgfutilensuremath {1.97}\\%
\pgfutilensuremath {5}&$4.34$&$\cdot 10^{-2}$&\pgfutilensuremath {0.99}&$3.01$&$\cdot 10^{-4}$&\pgfutilensuremath {1.99}&$2.39$&$\cdot 10^{-4}$&\pgfutilensuremath {1.98}\\\bottomrule %
\end {tabular}%

  \end{center}
  \vspace{1ex}
  \caption{Convergence rates for example 1 with $\beta_E = \beta_F =
    50$  and $\gamma = 0.01$.}
  \label{tab:convergence-rates-example-1}
\end{table}

\begin{table}[htb]
  \footnotesize
  \centering
  \begin{center}
    \begin {tabular}{cr<{\pgfplotstableresetcolortbloverhangright }@{}l<{\pgfplotstableresetcolortbloverhangleft }cr<{\pgfplotstableresetcolortbloverhangright }@{}l<{\pgfplotstableresetcolortbloverhangleft }cr<{\pgfplotstableresetcolortbloverhangright }@{}l<{\pgfplotstableresetcolortbloverhangleft }c}%
\toprule Level $k$&\multicolumn {2}{c}{$\|u_k - u \|_{H^1(\Gamma _h)}$}&EOC&\multicolumn {2}{c}{$\|u_k - u \|_{L^2(\Gamma _h)}$}&EOC&\multicolumn {2}{c}{$\|u_k - u \|_{L^\infty (N_h)}$}&EOC\\\midrule %
\pgfutilensuremath {0}&$2.21$&$\cdot 10^{1}$&--&$1.48$&$\cdot 10^{1}$&--&$4.56$&$\cdot 10^{0}$&--\\%
\pgfutilensuremath {1}&$7.33$&$\cdot 10^{0}$&\pgfutilensuremath {1.59}&$1.84$&$\cdot 10^{0}$&\pgfutilensuremath {3.01}&$8.54$&$\cdot 10^{-1}$&\pgfutilensuremath {2.42}\\%
\pgfutilensuremath {2}&$3.12$&$\cdot 10^{0}$&\pgfutilensuremath {1.23}&$5.41$&$\cdot 10^{-1}$&\pgfutilensuremath {1.77}&$2.26$&$\cdot 10^{-1}$&\pgfutilensuremath {1.92}\\%
\pgfutilensuremath {3}&$1.61$&$\cdot 10^{0}$&\pgfutilensuremath {0.95}&$1.16$&$\cdot 10^{-1}$&\pgfutilensuremath {2.22}&$5.38$&$\cdot 10^{-2}$&\pgfutilensuremath {2.07}\\%
\pgfutilensuremath {4}&$7.65$&$\cdot 10^{-1}$&\pgfutilensuremath {1.07}&$2.86$&$\cdot 10^{-2}$&\pgfutilensuremath {2.02}&$1.28$&$\cdot 10^{-2}$&\pgfutilensuremath {2.07}\\%
\pgfutilensuremath {5}&$3.78$&$\cdot 10^{-1}$&\pgfutilensuremath {1.02}&$7.18$&$\cdot 10^{-3}$&\pgfutilensuremath {2.00}&$3.32$&$\cdot 10^{-3}$&\pgfutilensuremath {1.95}\\\bottomrule %
\end {tabular}%

  \end{center}
  \begin{center}
    \begin {tabular}{cr<{\pgfplotstableresetcolortbloverhangright }@{}l<{\pgfplotstableresetcolortbloverhangleft }cr<{\pgfplotstableresetcolortbloverhangright }@{}l<{\pgfplotstableresetcolortbloverhangleft }cr<{\pgfplotstableresetcolortbloverhangright }@{}l<{\pgfplotstableresetcolortbloverhangleft }c}%
\toprule Level $k$&\multicolumn {2}{c}{$\|u_k - u \|_{H^1(\Gamma _h)}$}&EOC&\multicolumn {2}{c}{$\|u_k - u \|_{L^2(\Gamma _h)}$}&EOC&\multicolumn {2}{c}{$\|u_k - u \|_{L^\infty (N_h)}$}&EOC\\\midrule %
\pgfutilensuremath {0}&$2.20$&$\cdot 10^{1}$&--&$1.49$&$\cdot 10^{1}$&--&$4.59$&$\cdot 10^{0}$&--\\%
\pgfutilensuremath {1}&$7.22$&$\cdot 10^{0}$&\pgfutilensuremath {1.61}&$1.80$&$\cdot 10^{0}$&\pgfutilensuremath {3.05}&$8.20$&$\cdot 10^{-1}$&\pgfutilensuremath {2.48}\\%
\pgfutilensuremath {2}&$3.09$&$\cdot 10^{0}$&\pgfutilensuremath {1.22}&$5.38$&$\cdot 10^{-1}$&\pgfutilensuremath {1.74}&$2.23$&$\cdot 10^{-1}$&\pgfutilensuremath {1.88}\\%
\pgfutilensuremath {3}&$1.62$&$\cdot 10^{0}$&\pgfutilensuremath {0.93}&$1.16$&$\cdot 10^{-1}$&\pgfutilensuremath {2.22}&$5.27$&$\cdot 10^{-2}$&\pgfutilensuremath {2.08}\\%
\pgfutilensuremath {4}&$7.64$&$\cdot 10^{-1}$&\pgfutilensuremath {1.09}&$2.85$&$\cdot 10^{-2}$&\pgfutilensuremath {2.02}&$1.25$&$\cdot 10^{-2}$&\pgfutilensuremath {2.07}\\%
\pgfutilensuremath {5}&$3.78$&$\cdot 10^{-1}$&\pgfutilensuremath {1.02}&$7.14$&$\cdot 10^{-3}$&\pgfutilensuremath {2.00}&$3.29$&$\cdot 10^{-3}$&\pgfutilensuremath {1.93}\\\bottomrule %
\end {tabular}%

  \end{center}
  \vspace{1ex}
  \caption{Convergence rates for example 2 with $\beta_E = \beta_F =
    50$ (top) and $\beta_E = 0$, $\beta_F = 500$ (bottom) and
  $\gamma = 0.01$ in both cases.}
  \label{tab:convergence-rates-example-2}
\end{table}

\subsection{Condition Number Tests}
Finally, we numerically examine the mesh-size dependency of the condition number
of our proposed method and study how the positioning of the  surface 
in the background mesh affects the condition
number.

Let $\{\mcT_k\}_{k=0}^3$ be a sequence of successively refined
tessellations of $\Omega = [-1.6, 1.6]^3$ with mesh size $h = 3.2/5 \cdot 2^{-k}$.
On each refinement level $k$, we generate a family of surfaces
$\{\Gamma_{\delta}\}_{0\leqslant\delta\leqslant 1}$  
by translating the unit-sphere $S^2 = \{ x \in \RR^3 : \| x \| = 1 \}$
along the diagonal $(h,h,h)$; that is,
$\Gamma_{\delta} = S^2 + \delta_0 (h,h,h)$ with $\delta \in [0,1]$.
For $\delta = l/500$, $l=0,\ldots,500$,
we compute the condition number
$\kappa_{\delta}(\mcA)$ as the ratio of the absolute value of the largest (in modulus) and smallest (in modulus), non-zero eigenvalue.
For each refinement level $k$, the resulting condition numbers are plotted 
in Figure~\ref{fig:condition_number} as a function of $\delta$.
We observe that the position of $\Gamma$ relative to the background mesh
$\mcT_k$ has very little effect on the condition number
when the stabilization parameters $\beta_F$ and $\gamma$
are chosen sufficiently large.
In contrast, the condition number is highly sensitive and clearly
unbounded as a function of $\delta$ if we set either penalty
parameter in~\eqref{eq:jh-def} to $0$ as the corresponding plots in
Figure~\ref{fig:condition_number} reveal.
Finally, scaling the largest computed condition number on each refinement
level $k$ with $k^{-2}$ confirms the theoretically proven $O(h^{-2})$ bound, see Table~\ref{tab:scaled-condition-number}.

To further elucidate the importance of the various stabilization terms in~\eqref{eq:jh-def},
we conduct a second numerical experiment inspired by the results
presented in~\citet{OlshanskiiReusken2010}.
\citet{OlshanskiiReusken2010} show that diagonal preconditioning
yields robust condition number bounds and cures the discrete system
from being severely ill-conditioned
when a \emph{continuous} piecewise linear ansatz space is used,
see also Figure~\ref{fig:condition_number}.
Thus, no stabilization is needed and $\gamma$, as the only relevant
penalty parameter in the continuous case, can be set to $0$.
Repeating the same experiment for our proposed cut discontinuous
Galerkin method with either penalty parameter $\beta_F$ or $\gamma$
deactivated shows that the same conclusion is not true if
\emph{discontinuous} finite element functions are employed.
The reason lies in the construction of the approximation space by
restricting the finite element space defined on the background mesh to the
surface. 
For each element $T \in \mcT_h$, restricting the shape
functions defined on $T$ to the surface part $K = \Gamma_h \cap T$
clearly yields a set of locally linearly dependent functions on $K$.
With the basis functions stretching over several elements, this effect
is usually counteracted in the case of continuous finite element
functions, since curvature effects lead to different linear
dependencies on each element. 
Thus the resulting set of finite element functions on $\mcK_h$ is usually only
``nearly'' linearly dependent and therefore a well-conditioned system
can be obtained by means of proper preconditioning.
However, in the discontinuous case, restricting the finite element basis on $\mcT_h$
to the surface produces a globally linearly dependent set of discrete functions,
even in the presence of highly curved surfaces,
as interelement continuity is imposed only weakly via~\eqref{eq:jh-def}.
We also observe that if we enforce nearly inter-element continuity of the discrete functions 
by choosing a very large penalty parameter, e.g., $\beta_F = 5e6$,
diagonal preconditioning yields robust, albeit large, bounds for the condition number,
see Figure~\ref{fig:condition_number}. 

\begin{figure}[htb]
  \begin{center}
    \includegraphics{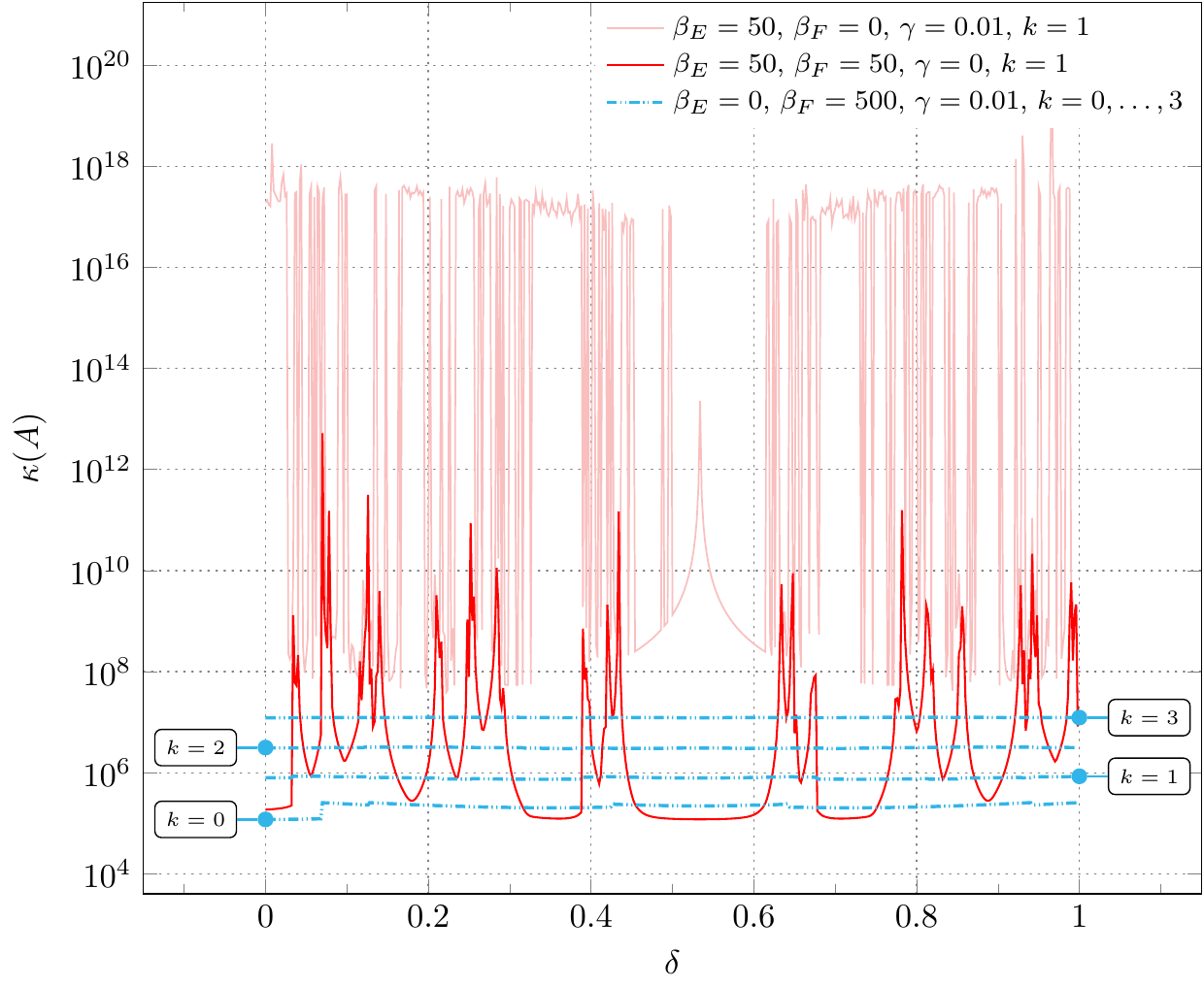}
    \includegraphics{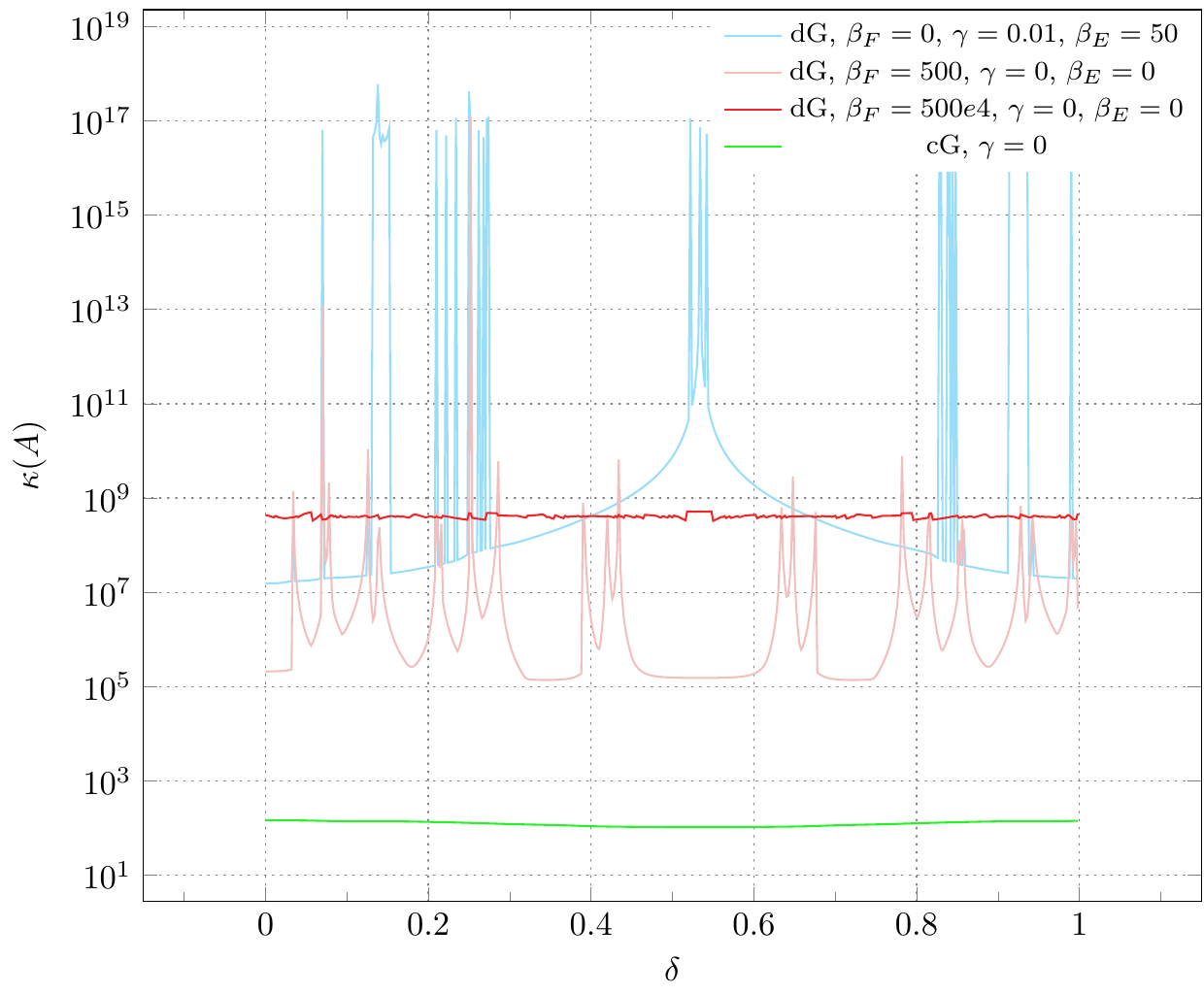}
    \caption{Condition numbers plotted as a function of the position parameter $\delta$.
      Top: With either $\beta_F=0$ or $\gamma = 0$, the condition number
      is highly sensitive to the surface positioning in the background mesh.
      With both penalties activated, the condition number is robust and
      the edge-based penalty term $\beta_E$ can be omitted.
      Bottom: Diagonally preconditioned surface CutFEMs. For an unstabilized continuous Galerkin (cG) method,
      preconditioning gives robust condition numbers while for our discontinuous CutFEM (dG),
      the preconditioned system is still highly dependent on the surface position
      if either $\beta_F = 0$ and $\gamma = 0$.
  }
    \label{fig:condition_number}
  \end{center}
\end{figure}

\begin{table}[htb]
  \footnotesize
\begin{center}
  \begin{tabular}[c]{l r r r r}
    \toprule
    $k$ & 1 & 2 & 3 & 4
    \\
    \midrule
    $k^{-2} \max_{\delta}\{\kappa_{\delta}(\mcA)\}$ 
    &
    7370.36& 7167.30& 7205.85& 6271.44
    \\
    \bottomrule
  \end{tabular}
  \vspace{2ex}
  \caption{Maximum of scaled condition numbers for each refinement
  level $k$.}
  \label{tab:scaled-condition-number}
\end{center}
\end{table}

\clearpage

\section*{Acknowledgements}
This research was supported in part by EPSRC, UK, Grant
No. EP/J002313/1, the Swedish Foundation for Strategic Research Grant
No.\ AM13-0029, the Swedish Research Council Grants Nos.\ 2011-4992,
2013-4708, and Swedish strategic research programme eSSENCE.

\bibliographystyle{plainnat}
\bibliography{bibliography}

\begin{thebibliography}{37}
\providecommand{\natexlab}[1]{#1}
\providecommand{\url}[1]{\texttt{#1}}
\expandafter\ifx\csname urlstyle\endcsname\relax
  \providecommand{\doi}[1]{doi: #1}\else
  \providecommand{\doi}{doi: \begingroup \urlstyle{rm}\Url}\fi

\bibitem[Brenner(2003)]{Brenner2003}
S.~C. Brenner.
\newblock {P}oincar{\'e}--{F}riedrichs inequalities for piecewise {$H^1$}
  functions.
\newblock \emph{SIAM J. Numer. Anal.}, 41\penalty0 (1):\penalty0 306--324,
  2003.

\bibitem[Burman and Hansbo(2012)]{BurmanHansbo2012}
E.~Burman and P.~Hansbo.
\newblock {Fictitious domain finite element methods using cut elements: II. A
  stabilized Nitsche method}.
\newblock \emph{Appl. Numer. Math.}, 62\penalty0 (4), 2012.

\bibitem[Burman and Hansbo(2013)]{BurmanHansbo2013}
E.~Burman and P.~Hansbo.
\newblock Fictitious domain methods using cut elements: {III}. {A stabilized
  Nitsche method for Stokes' problem}.
\newblock \emph{ESAIM, Math. Model. Num. Anal.}, page~19, 2013.

\bibitem[Burman et~al.(2006)Burman, Fern\'{a}ndez, and
  Hansbo]{BurmanFernandezHansbo2006}
E.~Burman, M.A. Fern\'{a}ndez, and P.~Hansbo.
\newblock {Continuous interior penalty finite element method for Oseen's
  equations}.
\newblock \emph{SIAM J. Numer. Anal.}, 44\penalty0 (3):\penalty0 1248--1274,
  2006.

\bibitem[Burman et~al.(2015{\natexlab{a}})Burman, Claus, Hansbo, Larson, and
  Massing]{BurmanClausHansboEtAl2014}
E.~Burman, S.~Claus, P.~Hansbo, M.G. Larson, and A.~Massing.
\newblock Cutfem: discretizing geometry and partial differential equations.
\newblock \emph{Internat. J. Numer. Meth. Engng}, 2015{\natexlab{a}}.
\newblock \doi{10.1002/nme.4823}.

\bibitem[Burman et~al.(2015{\natexlab{b}})Burman, Hansbo, and
  Larson]{BurmanHansboLarson2015}
E.~Burman, P.~Hansbo, and M.~G. Larson.
\newblock {A stabilized cut finite element method for partial differential
  equations on surfaces: The Laplace--Beltrami operator}.
\newblock \emph{Comput. Methods Appl. Mech. Engrg.}, 285:\penalty0 188--207,
  2015{\natexlab{b}}.

\bibitem[Burman et~al.(2015{\natexlab{c}})Burman, Hansbo, Larson, and
  Zahedi]{BurmanHansboLarsonEtAl2015}
E.~Burman, P.~Hansbo, M.~G. Larson, and S.~Zahedi.
\newblock Stabilized cut finite element methods for convection problems on
  surfaces.
\newblock In preparation, 2015{\natexlab{c}}.

\bibitem[Burman et~al.(2015{\natexlab{d}})Burman, Hansbo, Larson, and
  Zahedi]{BurmanHansboLarsonEtAl2014}
E.~Burman, P.~Hansbo, M.G. Larson, and S.~Zahedi.
\newblock Cut finite element methods for coupled bulk-surface problems.
\newblock \emph{Numer. Math.}, 2015{\natexlab{d}}.
\newblock \doi{10.1007/s00211-015-0744-3}.

\bibitem[{Chernyshenko} and {Olshanskii}(2015)]{ChernyshenkoOlshanskii2014}
A.~Y. {Chernyshenko} and M.~A. {Olshanskii}.
\newblock {An adaptive octree finite element method for PDEs posed on
  surfaces}.
\newblock \emph{Comput. Methods Appl. Mech. Engrg.}, 291:\penalty0 146--172,
  August 2015.

\bibitem[Deckelnick et~al.(2014)Deckelnick, Elliott, and
  Ranner]{DeckelnickElliottRanner2013}
K.~Deckelnick, C.~M. Elliott, and T.~Ranner.
\newblock Unfitted finite element methods using bulk meshes for surface partial
  differential equations.
\newblock \emph{SIAM J. Numer. Anal.}, 52\penalty0 (4):\penalty0 2137--2162,
  2014.

\bibitem[Dedner et~al.(2013)Dedner, Madhavan, and
  Stinner]{DednerMadhavanStinner2013}
A.~Dedner, P.~Madhavan, and B.~Stinner.
\newblock {Analysis of the discontinuous Galerkin method for elliptic problems
  on surfaces}.
\newblock \emph{IMA J. Numer. Anal.}, 33\penalty0 (3):\penalty0 952--973, 2013.

\bibitem[Demlow and {Olshanskii}(2012)]{DemlowOlshanskii2012}
A.~Demlow and M.~A.~A {Olshanskii}.
\newblock An adaptive surface finite element method based on volume meshes.
\newblock \emph{SIAM J. Numer. Anal.}, 50\penalty0 (3):\penalty0 1624--1647,
  2012.

\bibitem[Dziuk(1988)]{Dziuk1988}
G.~Dziuk.
\newblock Finite elements for the {B}eltrami operator on arbitrary surfaces.
\newblock In \emph{Partial differential equations and calculus of variations},
  volume 1357 of \emph{Lecture Notes in Math.}, pages 142--155. Springer,
  Berlin, 1988.

\bibitem[Dziuk and Elliott(2013)]{DziukElliott2013}
G.~Dziuk and C.~M. Elliott.
\newblock Finite element methods for surface {PDE}s.
\newblock \emph{Acta Numer.}, 22:\penalty0 289--396, 2013.

\bibitem[Ern and Guermond(2006)]{ErnGuermond2006}
A.~Ern and J.~L. Guermond.
\newblock {Evaluation of the condition number in linear systems arising in
  finite element approximations}.
\newblock \emph{ESAIM: Math. Model. Num. Anal.}, 40\penalty0 (1):\penalty0
  29--48, 2006.

\bibitem[Evans and Gariepy(1992)]{EvansGariepy1992}
L.~C. Evans and R.~F. Gariepy.
\newblock \emph{Measure Theory and Fine Properties of Functions}.
\newblock Studies in Advanced Mathematics. {CRC Press, Boca Raton, FL}, 1992.

\bibitem[Giesselmann and M\"uller(2014)]{GiesselmannMueller2014}
J.~Giesselmann and T.~M\"uller.
\newblock Geometric error of finite volume schemes for conservation laws on
  evolving surfaces.
\newblock \emph{Numer. Math.}, 128\penalty0 (3):\penalty0 489--516, 2014.

\bibitem[Gilbarg and Trudinger(2001)]{GilbargTrudinger2001}
D.~Gilbarg and N.~S. Trudinger.
\newblock \emph{Elliptic Partial Differential Equations of Second Order}.
\newblock Classics in Mathematics. Springer-Verlag, Berlin, 2001.

\bibitem[Grande and Reusken(2014)]{GrandeReusken2014}
J.~Grande and A.~Reusken.
\newblock A higher-order finite element method for partial differential
  equations on surfaces.
\newblock \emph{Preprint 401, IGPM, RWTH Aachen University.}, 2014.

\bibitem[Gross et~al.(2015)Gross, {Olshanskii}, and
  Reusken]{GrossOlshanskiiReusken2014}
S.~Gross, M.~A. {Olshanskii}, and A.~Reusken.
\newblock A trace finite element method for a class of coupled bulk-interface
  transport problems.
\newblock \emph{ESAIM: Math. Model. Numer. Anal.}, 2015.
\newblock \doi{10.1051/m2an/2015013}.

\bibitem[Gro{\ss} and Reusken(2007)]{GrossReusken2007}
Sven Gro{\ss} and Arnold Reusken.
\newblock An extended pressure finite element space for two-phase
  incompressible flows with surface tension.
\newblock \emph{J. Comput. Phys.}, 224\penalty0 (1):\penalty0 40--58, 2007.

\bibitem[Hansbo and Hansbo(2002)]{HansboHansbo2002}
A.~Hansbo and P.~Hansbo.
\newblock {An unfitted finite element method, based on Nitsche's method, for
  elliptic interface problems}.
\newblock \emph{Comput. Methods Appl. Mech. Engrg.}, 191\penalty0
  (47-48):\penalty0 5537--5552, 2002.

\bibitem[Hansbo et~al.(2003)Hansbo, Hansbo, and Larson]{HansboHansboLarson2003}
A.~Hansbo, P.~Hansbo, and Mats~G. Larson.
\newblock A finite element method on composite grids based on {N}itsche's
  method.
\newblock \emph{ESAIM: Math. Model. Num. Anal.}, 37\penalty0 (3):\penalty0
  495--514, 2003.

\bibitem[Hansbo et~al.(2014)Hansbo, Larson, and Zahedi]{HansboLarsonZahedi2013}
P.~Hansbo, M.~G. Larson, and S.~Zahedi.
\newblock A cut finite element method for a {S}tokes interface problem.
\newblock \emph{Appl. Numer. Math.}, 85:\penalty0 90--114, 2014.

\bibitem[Hansbo et~al.(2015)Hansbo, Larson, and
  Zahedi]{HansboLarsonZahedi2015b}
P.~Hansbo, M.~G. Larson, and S.~Zahedi.
\newblock Characteristic cut finite element methods for convection--diffusion
  problems on time dependent surfaces.
\newblock \emph{Comput. Methods Appl. Mech. Engrg.}, 293:\penalty0 431--461,
  2015.

\bibitem[Heimann et~al.(2013)Heimann, Engwer, Ippisch, and
  Bastian]{HeimannEngwerIppischEtAl2013}
F.~Heimann, C.~Engwer, O.~Ippisch, and P.~Bastian.
\newblock An unfitted interior penalty discontinuous {G}alerkin method for
  incompressible {N}avier--{S}tokes two--phase flow.
\newblock \emph{Internat. J. Numer. Methods Fluids}, 71\penalty0 (3):\penalty0
  269--293, 2013.

\bibitem[Johansson and Larson(2013)]{JohanssonLarson2013}
A.~Johansson and M.G. Larson.
\newblock A high order discontinuous {G}alerkin {N}itsche method for elliptic
  problems with fictitious boundary.
\newblock \emph{Numer. Math.}, 123\penalty0 (4), 2013.

\bibitem[Massing et~al.(2013)Massing, Larson, and Logg]{MassingLarsonLogg2013}
A.~Massing, M.G. Larson, and A.~Logg.
\newblock Efficient implementation of finite element methods on non-matching
  and overlapping meshes in 3d.
\newblock \emph{SIAM J. Sci. Comput.}, 35\penalty0 (1):\penalty0 C23--C47,
  2013.

\bibitem[Massing et~al.(2014{\natexlab{a}})Massing, Larson, Logg, and
  Rognes]{MassingLarsonLoggEtAl2013}
A.~Massing, M.~G. Larson, A.~Logg, and M.~E. Rognes.
\newblock {A stabilized Nitsche overlapping mesh method for the Stokes
  problem}.
\newblock \emph{Numer. Math.}, 128\penalty0 (1):\penalty0 73--101,
  2014{\natexlab{a}}.

\bibitem[Massing et~al.(2014{\natexlab{b}})Massing, Larson, Logg, and
  Rognes]{MassingLarsonLoggEtAl2013a}
A.~Massing, M.G. Larson, A.~Logg, and M.E. Rognes.
\newblock {A stabilized Nitsche fictitious domain method for the Stokes
  problem}.
\newblock \emph{J. Sci. Comput.}, 61\penalty0 (3):\penalty0 604--628,
  2014{\natexlab{b}}.
\newblock \doi{10.1007/s10915-014-9838-9}.

\bibitem[Olshanskii and Reusken(2010)]{OlshanskiiReusken2010}
M.~A. Olshanskii and A.~Reusken.
\newblock A finite element method for surface {PDE}s: matrix properties.
\newblock \emph{Numer. Math.}, 114\penalty0 (3):\penalty0 491--520, 2010.

\bibitem[Olshanskii et~al.(2009)Olshanskii, Reusken, and
  Grande]{OlshanskiiReuskenGrande2009}
M.~A. Olshanskii, A.~Reusken, and J.~Grande.
\newblock A finite element method for elliptic equations on surfaces.
\newblock \emph{SIAM J. Numer. Anal.}, 47\penalty0 (5):\penalty0 3339--3358,
  2009.

\bibitem[Olshanskii et~al.(2014{\natexlab{a}})Olshanskii, Reusken, and
  Xu]{OlshanskiiReuskenXu2014}
M.~A. Olshanskii, A.~Reusken, and X.~Xu.
\newblock A stabilized finite element method for advection--diffusion equations
  on surfaces.
\newblock \emph{IMA J. Numer. Anal.}, 34\penalty0 (2):\penalty0 732--758,
  2014{\natexlab{a}}.

\bibitem[Olshanskii et~al.(2014{\natexlab{b}})Olshanskii, Reusken, and
  Xu]{OlshanskiiReuskenXu2014a}
M.~A. Olshanskii, A.~Reusken, and X.~Xu.
\newblock An {E}ulerian space-time finite element method for diffusion problems
  on evolving surfaces.
\newblock \emph{SIAM J. Numer. Anal.}, 52\penalty0 (3):\penalty0 1354--1377,
  2014{\natexlab{b}}.

\bibitem[Scott and Zhang(1990)]{ScottZhang1990}
R.~Scott and S.~Zhang.
\newblock Finite element interpolation of nonsmooth functions satisfying
  boundary conditions.
\newblock \emph{Math. Comp.}, 54\penalty0 (190):\penalty0 483--493, 1990.

\bibitem[Sollie et~al.(2011)Sollie, Bokhove, and van~der
  Vegt]{SollieBokhoveVegt2011}
W.~E.~H. Sollie, O.~Bokhove, and J.~J.~W. van~der Vegt.
\newblock Space--time discontinuous {G}alerkin finite element method for
  two-fluid flows.
\newblock \emph{J. Comput. Phys.}, 230\penalty0 (3):\penalty0 789--817, 2011.

\bibitem[Wang and Chen(2015)]{WangChen2015}
Qiuliang Wang and Jinru Chen.
\newblock {A new unfitted stabilized Nitsche's finite element method for Stokes
  interface problems}.
\newblock \emph{Comput. Math. Appl.}, 2015.

\end{thebibliography}

\end{document}